\documentclass{amsart}
\usepackage{amsmath} 
\usepackage{amsthm}
\usepackage{amssymb}
\usepackage{mathrsfs}
\usepackage{hyperref}
\usepackage{color}
\usepackage{todonotes}
%\date{\today}

%\newcommand{\be}{\begin{equation}}
%\newcommand{\ee}{\end{equation}}

% Theorem environments
\newtheorem{theorem}{Theorem} [section]

\newtheorem{lemma}[theorem]{Lemma}
\newtheorem{proposition}[theorem]{Proposition}
\newtheorem{corollary}[theorem]{Corollary}
\newtheorem{definition}[theorem]{Definition}

%%%%%%%%%%%%%%%%%%%%%%%%NUMBERING%%%%%%%%%%%%%%%%%%%%%%%%

\numberwithin{equation}{section}

\begin{document}
	\title{Liv\v{s}ic theorems for Banach cocycles: existence and regularity}
	
\author{Rui Zou}
\address{School of Mathematics and Statistics, Nanjing University of Information Science and Technology, Nanjing 210044,  P.R. China}
\email{zourui@nuist.edu.cn }

\author{Yongluo Cao*}
\address{Departament of Mathematics, Shanghai Key Laboratory of PMMP, East China  Normal University,	Shanghai 200062, P.R. China}
\address{Departament of Mathematics, Center for Dynamical systems and Differential Equations, Soochow University,Suzhou 215006, Jiangsu, P.R. China}
\email{ylcao@suda.edu.cn}

\thanks{* Yongluo Cao is corresponding author.  This work was partially supported by NSFC (11771317,  11790274, 11901305),  Science and Technology Commission
	of Shanghai Municipality (18dz22710000).}

\date{\today}

\begin{abstract}
 We prove a nonuniformly hyperbolic version Liv\v{s}ic theorem, with cocycles taking values in  the group of invertible bounded linear operators on a Banach space. The result holds without the ergodicity assumption of the hyperbolic measure.  Moreover, We also prove a $\mu$-continuous solution of the cohomological equation is actually H\"older continuous for the uniform hyperbolic system.
\end{abstract}

\subjclass[2010]{37A20,37C25,37D25,.}
\keywords{Liv\v{s}ic theorem, Banach cocycles, nonuniformly hyperbolic systems.}

\maketitle

 \section{Introduction}   
   
   For a given dynamical system $f:M\to M$ and a  map $A:M\to G$, where G is a topological group, it's important to determine whether A is a \emph{coboundary}, that is, whether there exists a map $C:M\to G$ such that 
   $$A=(C\circ f)\cdot C^{-1}.$$
   Such a equation is usually called the \emph{cohomological equation} and $C$ is  a solution to the equation.
   
   These problems were first studied by Liv\v{s}ic \cite{livsic1971,livsic1972}. He proved that if $f$ is a hyperbolic system, $G=\mathbb{R}$ and A is H\"older continuous, then A is coboundary, if and only if 
   \[\sum\limits_{i=0}^{n-1}A(f^ip)=0, \quad\forall p=f^n(p), n\geq 1. \]
   
   Due to the interest and the importance of this result, many generalizations have been studied in different directions:
   \begin{enumerate}
   	\item More general groups: Does the Liv\v{s}ic theorem hold for more general groups?
   	\item More general dynamics: For other dynamical systems, e.g., nonuniformly hyperbolic systems, partially hyperbolic systems, etc., is there a Liv\v{s}ic-type theorem?
   	\item Regularity of solutions: If the cohomological equation has a measurable solution, does it coincide almost everywhere with a continuous one? Is  a continuous solution actually $C^r$?
   \end{enumerate}

   Around these questions, the cohomological equations has been extensively studied in recent decades. We introduce some of the highlights from different dynamics:

     $\bullet$ \emph{Expanding systems}. Conze, Guivarc'h \cite{conze1993croissance} and Savchenko \cite{savchenko1998cohomology} proved a \emph{non-positive Liv\v{s}ic theorem}, that is, if $A:M\to \mathbb{R}$ satisfies $\sum\limits_{i=0}^{n-1}A(f^ip)\leq0, \forall p=f^n(p), n\geq 1,$ then there exists a H\"older continuous function $C:M\to \mathbb{R}$ such that $A\leq C\circ f-C$.

     $\bullet$ \emph{Uniformly hyperbolic systems}. For $G=\mathbb{R}$, Bousch \cite{bousch2001condition} and  Lopes and Thieullen \cite{lopes2003} proved the non-positive Liv\v{s}ic theorem.   Liv\v{s}ic \cite{livsic1971}  also proved the Liv\v{s}ic theorem when  the group G admits a
     complete bi-invariant distance( e.g. Abelian or compact groups). For the group G not admitting  bi-invariant distances, one of the main difficulties is to "control distortions". To do so, initially, many authors\cite{livsic1972,pollicott2001,de2010livsic} assumed that the cocycle is sufficiently close to the constant identity cocycle.   A first breakthrough progress, without additional hypotheses, was made by Kalinin \cite{kalinin2011} in the case when $G=GL(d,\mathbb{R})$.  And then, Grabarnik and Guysinsky \cite{grabarnik} generalized  the result to Banach rings. Navas and Ponce \cite{Navas13} considered the group of germs of analytic diffeomorphisms. For groups of diffeomorphisms,  Kocsard and Potrie \cite{kocsard2016} and Avila, Kocsard and Liu \cite{AvilaLiu17} proved the corresponding  Liv\v{s}ic theorem. On the regularity of solutions of the cohomological equation, for the connected Lie group, Pollicott and Walkden \cite{pollicott2001} 
     proved under the "partial hyperbolicity" condition that every measurable solution coincides almost everywhere with a  H\"older continuous one. Bulter \cite{Butlerconformal} considered the case of the group $G=GL(d,\mathbb{R})$.
     
      $\bullet$ \emph{Flows}. For a transitive Anosov flow and $G=\mathbb{R}$,  the classical Liv\v{s}ic theorem was established by Liv\v{s}ic \cite{livsic1971}. Pollicott and Walkden  generalized the result to connected Lie groups in \cite{pollicott2001}. The non-positive Liv\v{s}ic theorem for $G=\mathbb{R}$ was proved by Pollicott and Sharp \cite{pollicott2004flow} and Lopes and Thieullen \cite{lopes2005}.
      
      $\bullet$ \emph{Partially hyperbolic systems}. In a partially hyperbolic system, since the periodic orbit may not exist, Katok and Kononenko \cite{Katok96} used the "periodic cycle function" to replace the periodic points, and they gave a sufficient and necessary condition of the coboundary of $A:M\to \mathbb{R}$ when the system $f$ is locally accessible.  And then Wilkinson \cite{Wilkinson13} generalized the result to $f$ is accessible, she also considered the regularity of solutions of the cohomological equations. The result for Banach cocycles, i.e. cocycles taking values in the group of invertible bounded linear operators on a Banach space, was proved  by Kalinin and Sadovskaya \cite{Kalinin2015Holonomies}.
     
     $\bullet$ \emph{Nonuniformly hyperbolic systems}. For $G=\mathbb{R}$, Katok and Hasselbatt established a nonuniform version Liv\v{s}ic theorem in their book  \cite{katok1995}. Recently, Zou and Cao \cite{ZouCao18Livsic} generalized their result to $G=GL(d,\mathbb{R})$. Backes and Poletti \cite{Backes2018A} also proved a similar result for $G=GL(d,\mathbb{R})$ later  independently.  In fact, authors for  the papers \cite{katok1995,ZouCao18Livsic,Backes2018A} only proved a nonuniform version Liv\v{s}ic theorem for ergodic  hyperbolic measures, not for general hyperbolic measures.
     
     In this paper, we prove a nonuniform version Liv\v{s}ic theorem and the H\"older regularity of  solutions for $G=GL(X)$, where X is a Banach space, and $GL(X)$ is the group of invertible bounded linear operators on X. 
\subsection{A nonuniform version Liv\v{s}ic theorem for $G=GL(X)$}
      The following closing property is used to replace the nonuniform hyperbolicity.
     
%     \begin{definition}\label{Def 1.1}
%     	Let $f:M\rightarrow M$ be a homeomorphism, we call $f$ satisfying the {\em closing property on a  subset $\Lambda\subset M$}, if  there is $\lambda>0$, and for any $\delta>0,$ there exists $\beta>0,$ such that given any $x,f^nx\in \Lambda,~n\geq 1$ with $d(x,f^nx)<\beta$,  there can be found a periodic point $p=f^np\in M$ satisfying $$d(f^{i}p,f^{i}x)\leq \delta\cdot e^{-\lambda\min\{i,n-i\}},~\quad \text{for every}~ i=0,\cdots,n.$$
%     \end{definition}
%     We note that if $f$ is a symbolic dynamical system, a uniformly or non-uniformly hyperbolic diffeomorphism,  then  $f$ satisfies the above closing property \cite{Bowen470, katok1995}.
%     
  Recall that a map   $C:M\to GL(X)$  is called {\em $\mu$-continuous}, if  there exists a sequence of compact set $K_n\subset M$ such that $\mu(\cup_{n\geq 1}K_n)=1$ and $C|_{K_n}$ is continuous for every $n.$ An $f$-invariant measure $\mu$ is called a {\em hyperbolic measure } if its
  Lyapunov exponents  are different from zero at $\mu$-almost every point.   
   \begin{theorem}\label{thm A} 
   	
   Let $f$ be a $C^{1+\gamma}$ diffeomorphism of a compact manifold M, preserving an hyperbolic measure $\mu$, and let $A:M\to GL(X)$ be an $\alpha$-H\"older continuous map  satisfying
   	\begin{equation}\label{1.1}
   	A(f^{n-1}p)\cdots A(fp)A(p)=Id, \quad\forall p=f^n(p),\forall n\geq 1.  
   	\end{equation}
   	Then there exists a $\mu$-continuous map $C:M\to GL(X)$ such that 
   	\[A(x)=C(fx)C(x)^{-1},\quad \text{for } \mu\text{-almost every } x\in M.  \]
   \end{theorem}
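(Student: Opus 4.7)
The plan is to adapt Kalinin's periodic-approximation proof of the uniformly hyperbolic Liv\v{s}ic theorem for $GL(d,\mathbb{R})$, together with its Banach-cocycle extension by Kalinin and Sadovskaya, to the nonuniformly hyperbolic setting by using Pesin theory in place of uniform hyperbolicity. First I would invoke Pesin theory to produce a nested exhaustion of a full-measure subset of $M$ by compact regular sets $\Lambda_\ell$, on each of which the Oseledets splitting, the sizes of local stable and unstable manifolds, and the stable/unstable contraction rates are uniform (with constants depending on $\ell$). Katok's closing lemma on Pesin blocks would then play the role of the Anosov closing lemma: any orbit segment that almost closes up near $\Lambda_\ell$ is exponentially shadowed by a genuine periodic orbit, so the global hypothesis \eqref{1.1} can be applied to approximating periodic orbits throughout the argument.

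Next I would construct stable and unstable holonomies for the Banach cocycle $A$ by the Cauchy limit
\[
H^s_{x,y}=\lim_{n\to\infty} A^n(y)^{-1} A^n(x)
\]
for $y$ in the local stable manifold of $x\in\Lambda_\ell$, and symmetrically along unstables. Convergence follows from the $\alpha$-H\"older control of $A$, the exponential contraction of $d(f^k x,f^k y)$ along stable leaves, and the tempered sub-exponential growth of $\|A^n(\cdot)\|$ and $\|A^n(\cdot)^{-1}\|$ on each $\Lambda_\ell$, which together make the standard telescoping bound geometrically summable. Fixing a base point $p_\ell\in\Lambda_\ell$, I would then define $C(x)$ on a local product neighborhood of $p_\ell$ inside $\Lambda_\ell$ by $C(x)=H^u_{q,x}\,H^s_{p_\ell,q}$, where $q$ is the unique intersection of the local stable manifold of $p_\ell$ with the local unstable manifold of $x$, and extend $C$ to the saturation of $\Lambda_\ell$ under $f$ via the cocycle relation $C(f x)=A(x)C(x)$, so that the desired cohomological equation holds wherever $C$ is defined.

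The central step, and the main obstacle, is showing that $C$ is well-defined, that is, independent of the path chosen in the stable/unstable web from $p_\ell$ to $x$, or equivalently that the composed holonomy around any closed loop in the local product structure is the identity. This is precisely where the periodic-orbit hypothesis \eqref{1.1} is used: given such a loop, Katok closing produces a nearby periodic point $p$ whose orbit shadows the loop exponentially; one then compares the true composed holonomy to the cocycle product $A(f^{n-1}p)\cdots A(p)$ over the periodic orbit using the H\"older estimate and the stable/unstable contraction to absorb the error, and applies \eqref{1.1} to conclude that the leading term is the identity. Non-ergodicity of $\mu$ introduces no new difficulty at this step, since Pesin blocks are defined intrinsically, without any reference to the ergodic decomposition, and \eqref{1.1} is a global assumption valid on every periodic orbit of $f$. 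Finally, the $C$ so defined is continuous on each $\Lambda_\ell$, hence $\mu$-continuous on $\bigcup_\ell \Lambda_\ell$, which has full measure, and the cohomological equation holds on the $f$-invariant full measure subset obtained from the orbit-saturation of this union.
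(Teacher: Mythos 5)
Your outline misses the step on which everything else depends: proving that the Lyapunov exponents of the cocycle vanish. You assert ``tempered sub-exponential growth of $\|A^n(\cdot)\|$ and $\|A^n(\cdot)^{-1}\|$ on each $\Lambda_\ell$'' as if it were part of Pesin theory, but for the cocycle $\mathcal{A}$ (as opposed to $Df$) this is exactly the statement $\lambda_+(\mathcal{A},\mu)=\lambda_-(\mathcal{A},\mu)=0$, which is not assumed and must be extracted from the periodic data \eqref{1.1} via the periodic approximation theorem for Lyapunov exponents of Banach cocycles over nonuniformly hyperbolic systems (Kalinin--Sadovskaya, used at the very start of the paper's Section 3.1). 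Without it, and with no fiber bunching hypothesis, the telescoping series defining $H^s_{x,y}=\lim_n (\mathcal{A}_y^n)^{-1}\mathcal{A}_x^n$ need not converge, so your holonomies may simply not exist; even with zero exponents, the bounds you need are at the non-regular point $y\in W^s_{loc}(x)$ and require Lyapunov-norm comparison estimates of the type of Lemma \ref{lemma 3.1}. Your well-definedness step is also shakier than you suggest: Katok's closing lemma shadows an almost-recurrent \emph{orbit segment} by a periodic orbit, not an $su$-loop, and turning a quadrilateral holonomy into a cocycle product over a periodic orbit is a nontrivial argument you have not supplied. The paper avoids this entirely: it never builds holonomies for Theorem \ref{thm A}, but defines $C(f^nz)=\mathcal{A}_z^n$ along a single orbit that is dense in $\mathrm{supp}(\mu_{G_l})$ and proves uniform continuity from the closing lemma together with the distortion estimate $d(\mathcal{A}_x^n,Id)\le c_2\delta^\alpha$ of Lemma \ref{lemma 3.3}.

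The second genuine gap is your dismissal of non-ergodicity. If $\mu$ is not ergodic, the $f$-saturation of a local product neighborhood of one base point $p_\ell$ (or of one Pesin block) need not have full $\mu$-measure, the dense-orbit argument needed to spread $C$ over a whole block uses ergodicity (the paper's Lemma \ref{lemma 3.2}), and both Katok's closing lemma and the zero-exponent conclusion above are per ergodic component, whose hyperbolicity rates and constants vary from component to component. So your construction only produces, for almost every ergodic component $\nu$, a $\nu$-continuous solution $C_\nu$; assembling these into a single $\mu$-continuous $C$ when there may be uncountably many components is precisely the difficulty the paper isolates and solves in Theorem \ref{thm 3.4}, generalizing Fisher--Morris--Whyte by means of Rohlin's decomposition, von Neumann's selection theorem, and the observation that all the relevant maps take values in a separable subset $K\subset GL(X)$ (needed because $GL(X)$ is neither separable nor locally compact, so Lusin-type arguments do not apply directly). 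This measurable-selection half of the proof is absent from your proposal, and ``Pesin blocks are defined intrinsically'' does not substitute for it.
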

   
%   Since a   $C^{1+\gamma}$ nonuniformly hyperbolic system has the closing property  \cite[Theorem S.4,13]{katok1995}, we have the following corollary immediately.
%   
%   Recall that an $f$-invariant measure $\mu$ is called a hyperbolic measure if its
%   Lyapunov exponents  are different from zero at $\mu$-almost every point.   
%   
%\begin{corollary}
%   Let $f$ be a $C^{1+\gamma}$ diffeomorphism of a compact manifold M, preserving an hyperbolic measure $\mu$. Let $A:M\to GL(X)$ be an $\alpha$-H\"older continuous map  satisfying
%  \begin{equation*}
%    A(f^{n-1}p)\cdots A(fp)A(p)=Id, \quad\forall p=f^n(p),\forall n\geq 1.  
%  \end{equation*}
%   Then there exists a $\mu$-continuous map $C:M\to GL(X)$ such that 
%   \[A(x)=C(fx)C(x)^{-1},\quad \text{for } \mu\text{-almost every } x\in M.  \]
%\end{corollary}

 We  point out that we do not assume the ergodicity of the hyperbolic measure $\mu$ in Theorem \ref{thm A}. In fact, the proof from ergodicity to  non-ergodicity is nontrivial, since the measure $\mu$ may have uncountably many ergodic components. We need to generalize a result of Fisher, Morris and Whyte \cite{Fisher04}   to overcome this problem.

\subsection{H\"older regularity of solutions} Let $f:M\to M$ be an Anosov diffeomorphism.   An $f$-invariant measure $\mu$ is called  having {\em local product structure}, if it is locally equivalent to the product of the projections of $\mu$ to the local stable and unstable manifolds. 
\begin{theorem}\label{thm B}
	Let $f$ be an Anosov diffeomorphism of a compact manifold M, $\mu$ be an  ergodic $f$-invariant measure on M with full support and local product structure, and $A:M\to GL(X)$ be an $\alpha$-H\"older continuous map. Suppose that there exists a $\mu$-continuous map $C:M\to GL(X)$ such that 
	\begin{equation}\label{1.2}
	A(x)=C(fx)C(x)^{-1},  \quad \text{for}~ \mu\text{-a.e.}~ x\in M.  
	\end{equation}
	Then C coincides $\mu$-a.e. with an $\alpha$-H\"older continuous map $\hat{C}$ satisfying the same equation everywhere. %todo 需要 f C^2 ?	
\end{theorem}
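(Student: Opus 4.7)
The plan is to build a H\"older candidate $\hat C$ from the stable and unstable holonomies of the cocycle $A$, show that it agrees with $C$ on a set of full $\mu$-measure, and promote H\"older regularity from the leaves to $M$ using the Anosov local product structure.

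First I would apply Lusin's theorem and inner regularity of $\mu$ to the $\mu$-continuous map $C$ to obtain a compact set $K_0\subset M$ with $\mu(K_0)>1-\varepsilon$ on which $C$ is uniformly continuous and both $\|C\|$ and $\|C^{-1}\|$ are bounded by some constant $L$. Simultaneously, since $f$ is Anosov and $A$ is $\alpha$-H\"older, the stable and unstable cocycle holonomies
\[
H^s_{x,y}=\lim_{n\to\infty}A^n(y)^{-1}A^n(x),\qquad H^u_{x,y}=\lim_{n\to\infty}A^{-n}(y)^{-1}A^{-n}(x)
\]
exist for $y\in W^s(x)$ respectively $y\in W^u(x)$, are uniformly bounded, depend $\alpha$-H\"older continuously on $(x,y)$ inside local leaves, and satisfy the equivariance $A(y)H^s_{x,y}=H^s_{fx,fy}A(x)$ (and its analogue for $H^u$); these are exactly the Banach-cocycle holonomies constructed in the work of Kalinin--Sadovskaya cited in the introduction.

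The key identity to establish is that for $\mu$-a.e.\ $x$ and $\mu^s_x$-a.e.\ $y\in W^s_{\mathrm{loc}}(x)$ one has $C(y)=H^s_{x,y}\,C(x)$, with the symmetric statement on unstable leaves. Iterating \eqref{1.2} gives $A^n(x)=C(f^n x)C(x)^{-1}$ and therefore
\[
A^n(y)^{-1}A^n(x)=C(y)\,C(f^n y)^{-1}C(f^n x)\,C(x)^{-1},
\]
so it suffices to show $C(f^{n_k}y)^{-1}C(f^{n_k}x)\to\mathrm{Id}$ along some subsequence; Birkhoff and Poincar\'e recurrence yield $n_k\to\infty$ with $f^{n_k}x\in K_0$, and the local product structure of $\mu$ provides corresponding recurrence of $y$, so that $d(f^{n_k}x,f^{n_k}y)\to 0$ combined with uniform continuity and boundedness of $C|_{K_0}$ produces the limit. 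With this identity in hand I would fix a base point $x_0\in\mathrm{supp}\,\mu$ where it applies, set $\hat C(y):=H^s_{x_0,y}C(x_0)$ on $W^s(x_0)$ and $\hat C(z):=H^u_{x_0,z}C(x_0)$ on $W^u(x_0)$, and extend to a general $w=[y,z]\in W^u_{\mathrm{loc}}(y)\cap W^s_{\mathrm{loc}}(z)$ by composing the two holonomies, propagating to the whole manifold by topological transitivity of $f$. By construction $\hat C$ is uniformly $\alpha$-H\"older along $W^s$-leaves and along $W^u$-leaves, so the Anosov version of Journ\'e's lemma upgrades it to an $\alpha$-H\"older map on $M$; the equivariance of the holonomies forces $A(x)=\hat C(fx)\hat C(x)^{-1}$ everywhere, and the holonomy identity together with the local product structure of $\mu$ forces $\hat C=C$ $\mu$-a.e.

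The main obstacle is the identity $C(y)=H^s_{x,y}C(x)$: one needs simultaneous return times of $x$ and of $\mu^s_x$-typical $y$ into $K_0$, and in the Banach setting one must track the operator-norm convergence of $C(f^{n_k}y)^{-1}C(f^{n_k}x)\to\mathrm{Id}$ uniformly rather than merely pointwise. This is precisely where the local product structure of $\mu$ (not just ergodicity) is indispensable, since it allows a Fubini argument showing that the conditional measures on stable leaves assign positive mass to $K_0$ for typical base points, making the subsequence extraction rigorous on both foliations.
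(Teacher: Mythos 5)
Your plan assumes away the central difficulty of this theorem. You write that ``since $f$ is Anosov and $A$ is $\alpha$-H\"older, the stable and unstable cocycle holonomies exist, are uniformly bounded, depend $\alpha$-H\"older continuously\dots these are exactly the Banach-cocycle holonomies constructed by Kalinin--Sadovskaya.'' That construction requires the fiber bunching condition, which is precisely the hypothesis this theorem is designed to remove (Sadovskaya's earlier result already covers the fiber bunched separable case). For a merely H\"older cocycle into $GL(X)$ the limits $\lim_n(\mathcal{A}_y^n)^{-1}\mathcal{A}_x^n$ need not converge at all, so your candidate $\hat C(y)=H^s_{x_0,y}C(x_0)$ is not even defined at general points of $M$, and the Journ\'e-type upgrade has nothing to act on. What the paper actually does is: (a) use the coboundary relation plus recurrence to show $\lambda_\pm(\mathcal{A},\mu)=0$, which places $\mu$-a.e.\ point in a set $D(N,\theta)$ with $\theta<\tau\alpha$ where holonomies do exist (Viana's estimate, Proposition 2.4); (b) prove the holonomy invariance of $C$ a.e.\ (your ``key identity,'' which is fine and proved the same way, via simultaneous returns to a set of measure $>1/2$); (c) extend $C$ H\"older-continuously to $\mathrm{supp}(\mu|D(N,\theta))$ and then to $D(N,\theta)$ using the local product structure; and crucially (d) show via the specification property, delicate Lyapunov-norm distortion estimates, the Gou\"ezel--Karlsson theorem, and Kalinin--Sadovskaya's periodic approximation of exponents that \emph{every} periodic point satisfies $\mathcal{A}_p^n=\mathrm{Id}$, hence all ergodic measures have zero exponents, hence by Schreiber's result $\|\mathcal{A}_x^N\|\,\|(\mathcal{A}_x^N)^{-1}\|$ is uniformly subexponential and $D(N,\varepsilon)=M$. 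Only after step (d) do holonomies exist at every point and does the H\"older extension satisfying the equation everywhere make sense.

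Your proposal contains no substitute for step (d): ``propagating to the whole manifold by topological transitivity'' does not produce holonomies at points where the forward and backward products of the cocycle grow too fast relative to $e^{\tau\alpha n}$, and without controlling the periodic data you cannot rule out such points, nor verify the cohomological equation at them. So the argument as written establishes (at best) the a.e.\ statements on a full-measure set, but not the existence of a globally defined $\alpha$-H\"older $\hat C$ solving the equation everywhere, which is the content of the theorem.
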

     This theorem gives a  positive answer to Question (\romannumeral3) for $G=GL(X)$.  The case $G=\mathbb{R}$ was first proved by Liv\v{ s}ic \cite{livsic1972}.  The case $G=GL(d,\mathbb{R})$  was proved by Sadovskaya \cite{Sado15} under the fiber bunching condition. And then Bulter \cite{Butlerconformal} improved her result by removing this additional condition.    If $X$ is a Banach space,  Sadovskaya  \cite{SadovskayaBanach} proved a similar result under the assumption that $X$ is a separable Banach space and the cocycle is fiber bunched.  We release these assumptions. And it is seen from Theorem \ref{thm A} that the $\mu$-continuity   condition  of $C$ in Theorem \ref{thm B} is natural.
    
  Since the  measure of maximal entropy or more generally the equilibrium states corresponding to H\"older continuous potentials for Anosov diffeomorphisms has full support and local product structure \cite{pollicott2001}.   As a corollary of Theorem \ref{thm A} and \ref{thm B},  we obtain immediately the uniform version Liv\v sic theorem which is proved by  Grabarnik and Guysinsky \cite{grabarnik}.

\begin{corollary}\label{cor 1.3}
	 	Let $f:M\to M$ be a transitive $C^{1+\gamma}$ Anosov diffeomorphism, and let $A:M\to GL(X)$ be an $\alpha$-H\"older continuous map  satisfying
	\begin{equation*}
	A(f^{n-1}p)\cdots A(fp)A(p)=Id, \quad\forall p=f^n(p),\forall n\geq 1.  
	\end{equation*}
	Then there exists an $\alpha$-H\"older continuous map $C:M\to GL(X)$ such that 
	\[A(x)=C(fx)C(x)^{-1},\quad \forall  x\in M.  \]
\end{corollary}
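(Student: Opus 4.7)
The plan is to exhibit a single invariant measure that fits the hypotheses of both Theorem \ref{thm A} and Theorem \ref{thm B}, and then chain the two results. Concretely, I would take $\mu$ to be the measure of maximal entropy of $f$ (or, equivalently, any equilibrium state of a H\"older continuous potential). Since $f$ is a transitive Anosov diffeomorphism, the cited result of Pollicott and Walkden guarantees that $\mu$ has full support and local product structure, and it is well known that such equilibrium states are ergodic (in fact mixing). Moreover, $\mu$ is automatically hyperbolic: because $f$ is (uniformly) Anosov, the stable and unstable bundles are $Df$-invariant with uniform contraction/expansion, so the Lyapunov exponents at $\mu$-a.e. point are bounded away from zero.

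With this $\mu$ in hand I would first apply Theorem \ref{thm A}. The cocycle $A : M \to GL(X)$ is $\alpha$-H\"older and satisfies the periodic-orbit identity (\ref{1.1}) by assumption; note that Theorem \ref{thm A} does not require ergodicity of $\mu$, so the application is immediate. It produces a $\mu$-continuous map $C_0 : M \to GL(X)$ with
\[
A(x) = C_0(fx)\, C_0(x)^{-1} \quad \text{for } \mu\text{-a.e. } x \in M.
\]

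Next I would feed $C_0$ into Theorem \ref{thm B}. All of its hypotheses are in place: $f$ is Anosov, $\mu$ is ergodic with full support and local product structure, $A$ is $\alpha$-H\"older, and $C_0$ is a $\mu$-continuous solution of the cohomological equation (\ref{1.2}). Theorem \ref{thm B} therefore yields an $\alpha$-H\"older continuous map $\hat C : M \to GL(X)$ which coincides with $C_0$ on a set of full $\mu$-measure and satisfies $A(x) = \hat C(fx)\,\hat C(x)^{-1}$ for every $x \in M$. This $\hat C$ is exactly the map required by the corollary.

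There is essentially no hard step in the argument — it is a bookkeeping combination of the two main theorems. The only point that deserves any care is the choice of $\mu$: it must simultaneously be hyperbolic (for Theorem \ref{thm A}) and ergodic with full support and local product structure (for Theorem \ref{thm B}). The measure of maximal entropy of a transitive Anosov diffeomorphism satisfies all of these conditions, so the corollary follows at once.
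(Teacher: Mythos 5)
Your proposal is correct and is exactly the paper's argument: the authors also take an equilibrium state (e.g.\ the measure of maximal entropy), which has full support and local product structure by the cited result of Pollicott--Walkden and is automatically hyperbolic and ergodic for a transitive Anosov diffeomorphism, and then chain Theorem \ref{thm A} with Theorem \ref{thm B}.
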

%\begin{remark}
%	The   $C^{1+\gamma}$ condition of  $f$ in Corollary \ref{cor 1.3} can be replaced by $C^{1}$. In fact,  if $f$ is just $C^1$, one can choose a $C^{1+\gamma}$ diffeomorphism $g$ which is $C^1$-close to $f$. Then we can obtain the desired  result by using the H\"older continuity of  the conjugacy between $f $ and $g$ and using the $\alpha$-H\"older continuity of the stable and unstable holonomies of $\mathcal{A}.$
%\end{remark}
   
{\bf Acknowledgments.} We are grateful to Clark Butler and Zhiren Wang   for helpful discussions and suggestions.

%%%%%%%%%%%%%%%%%%%%%%%%   Section 2    %%%%%%%%%%%%%%%%%%%%%%
\section{Preliminaries and Notations}
 \subsection{Cocycles and Exponents}
   \begin{definition}
	Suppose that $f:M\rightarrow M$ is invertible , and $A:M\rightarrow GL(X)$. A map $\mathcal{A}:M\times \mathbb{Z}\rightarrow GL(X)$ is called 	a linear  multiplicative cocycle over $f$ generated by A , if
	\[ \mathcal{A}_x^n :=\mathcal{A}(x,n)= \left\{
	\begin{array}{ll}
	A(f^{n-1}x)\cdot\cdot\cdot A(fx)A(x),& \quad\mbox{if}\ n > 0 ,\\
	Id,&\quad \mbox{if}\ n = 0,\\
	A(f^{-n}x)^{-1}\cdot\cdot\cdot A(f^{-2}x)^{-1}A(f^{-1}x)^{-1},&\quad \mbox{if}\ n <0.
	\end{array}\right.\]
	Clearly,  $ \mathcal{A}$ satisfies    $\mathcal{A}_x^{n+k} = ~\mathcal{A}_{f^kx}^{n}\circ\mathcal{A}_x^{k}$.
   \end{definition}
We introduce a metric $d$ on ~$GL(X)$ such that $\big(GL(X),d\big)$ is a  complete metric space:
\[d(A,B) = \|A - B\| + \|A^{-1} - B^{-1}\|.\]
   A cocycle $\mathcal{A}$ is called~{\em $\alpha$-H\"{o}lder} continuous, if its {\em generator}~$A=\mathcal{A}(\cdot,1):M\rightarrow GL(X)$ is~$\alpha$-H\"{o}lder continuous. 
   
    Recall that a  sequence of functions $a_n:M\to \mathbb{R}$ over a system $(M,f)$ is called a {\em subadditive cocycle} if  it satisfies
   $$a_{m+n}(x)\leq a_m(x)+a_n(f^mx)$$ for any $m,n\in \mathbb{N}$ and $x\in M.$ If $f$ is an ergodic measure preserving transformation of a probability space $(M,\mu)$ and $a_1\in L^1(M)$, then the  Subadditive Ergodic Theorem yields that there exists a set $\mathcal{R}$ of $\mu$-full measure  such that for any $x\in \mathcal{R},$  
   \[\lambda:=\lim\limits_{n\to +\infty} \frac{1}{n}\int a_nd\mu=\lim\limits_{n\to +\infty} \frac{1}{n}a_n(x).\]
   The limit $\lambda$ is called the {\em exponent} of the cocycle $a_n$ with respect to $\mu$. If we consider the continuous cocycle $\mathcal{A}:M\times \mathbb{Z}\rightarrow GL(X)$, since $\log\|\mathcal{A}_x^n\|$ and $\log\|(\mathcal{A}_x^n)^{-1}\|$ are subadditive, for  $\mu$-a.e. $x\in M,$   the limits
   \[\lambda_+(\mathcal{A},\mu):=\lim\limits_{n\to +\infty} \frac{1}{n}\int\log \|\mathcal{A}_x^n\|d\mu=\lim\limits_{n\to +\infty} \frac{1}{n}\log \|\mathcal{A}_x^n\|, \]
   and
   \begin{align*}
   \lambda_-(\mathcal{A},\mu):=-\lim\limits_{n\to +\infty} \frac{1}{n}\int\log \|(\mathcal{A}_x^n)^{-1}\|d\mu =-\lim\limits_{n\to +\infty} \frac{1}{n}\log \|(\mathcal{A}_x^{n})^{-1}\|
   \end{align*} 
   exist. $\lambda_+(\mathcal{A},\mu)$ and  $\lambda_-(\mathcal{A},\mu)$ are called the {\em upper Lyapunov exponent } and the {\em lower Lyapunov exponent } of $\mathcal{A}$ with respect to $\mu,$ respectively. 

\subsection{Hyperbolic Measure and Closing Lemma}
    Let $f$ be a $C^{1+\gamma}$  diffeomorphism of a compact manifold M.  Recall that an $f$-invariant  measure $\mu$  is said to be hyperbolic, if the Lyapunov exponents of the derivative cocycle $Df$ are non-zero for $\mu$-a.e. $x\in M.$    
    	
    We will apply the following  Katok's closing lemma \cite[Theorem S.4,13]{katok1995}(See also Theorem 15.1.2 of \cite{barreira000pesin}).
    
    \begin{lemma}[Katok's Closing Lemma]\label{fengbi}
    	Let $f\in $ {\em Dif\/f}$^{\/1+\gamma}(M)$, preserving  an ergodic hyperbolic measure $\mu$. Then there exist $\lambda>0$ and a compact set $\Lambda$ with $\mu(\Lambda)>0$, such that for any ~$\delta>0$, there exists $\beta>0$,  if $ x,f^{n}(x)\in \Lambda$ satisfying ~$d(x,f^{n}x)<\beta,$ then there exists a periodic point ~$p$ with $p=f^{n}(p)$~such that~$d(f^{i}p,f^{i}x)\leq \delta\cdot e^{-\lambda\min\{i,n-i\}}$.
    \end{lemma}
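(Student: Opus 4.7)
The plan is to invoke Pesin theory to convert the nonuniform hyperbolicity of $\mu$ into uniform estimates on a positive-measure compact set, and then run a graph transform / fixed point argument in Lyapunov charts to produce the shadowing periodic orbit. Since $\mu$ is ergodic and hyperbolic, the Oseledets theorem provides an invariant measurable splitting $T_xM = E^s(x)\oplus E^u(x)$ for $\mu$-a.e.\ $x$, with all Lyapunov exponents uniformly bounded away from $0$ by some $\chi>0$. First I would fix $0<\lambda<\chi$ and pass to a Pesin regular set: for each $\ell\in\mathbb{N}$ there is a compact set $\Lambda_\ell$ on which (a) the splitting $E^s\oplus E^u$ is continuous with angle bounded below, (b) the local stable and unstable manifolds $W^s_{\mathrm{loc}},W^u_{\mathrm{loc}}$ have size bounded below by $r_\ell>0$, and (c) the hyperbolic estimates $\|Df^n|_{E^s}\|\le C_\ell e^{-\lambda n}$, $\|Df^{-n}|_{E^u}\|\le C_\ell e^{-\lambda n}$ hold in appropriate Lyapunov charts. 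Choose $\ell$ large enough that $\mu(\Lambda_\ell)>0$ and set $\Lambda:=\Lambda_\ell$.

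Next, given $\delta>0$ I would choose $\beta=\beta(\delta,\ell)>0$ so small that whenever $x,f^n(x)\in\Lambda$ with $d(x,f^n x)<\beta$, the points $x$ and $f^n x$ lie deep inside each other's Lyapunov charts. The strategy is then to construct a periodic orbit by closing up the pseudo-orbit $x,fx,\dots,f^{n-1}x,x$ (obtained by jumping from $f^n x$ back to $x$ along the short segment of length $<\beta$). Working in charts, the map $f^n$ on a small ball around $x$ admits a hyperbolic splitting approximating $E^s(x)\oplus E^u(x)$, and its nonlinear $n$-th iterate behaves like a hyperbolic linear map up to controlled errors. A standard Banach fixed point / graph transform argument produces a unique point $p$ near $x$ with $f^n(p)=p$: $p$ is the transverse intersection of a local unstable manifold of $x$ (pulled forward from a local unstable manifold of $f^n x\approx x$) with a local stable manifold of $x$.

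Finally, the exponential shadowing estimate $d(f^i p, f^i x)\le \delta e^{-\lambda\min\{i,n-i\}}$ would follow from splitting the orbit segment into its first half and its second half: for $i\le n/2$ use that $p$ lies on the local unstable manifold of an iterate of $x$, so under backward iteration from $f^n p=p$ to $f^i p$ the distance contracts at rate $e^{-\lambda(n-i)}$; for $i\ge n/2$ use the stable manifold and forward iteration, giving contraction at rate $e^{-\lambda i}$. Taking the minimum of the two bounds yields the claimed estimate.

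The main obstacle will be making the graph transform a genuine contraction in the presence of nonuniform hyperbolicity: the constants $C_\ell$ in the Pesin block grow with $\ell$ and the chart sizes shrink, so one must very carefully choose $\beta$ small compared to $r_\ell/C_\ell$, and verify that the nonlinear remainder in the charts is dominated by the hyperbolic contraction/expansion. This is the content of Katok's original argument in \cite{katok1995}, and since the lemma as stated is a direct quotation of \cite[Theorem S.4.13]{katok1995} (see also \cite[Theorem 15.1.2]{barreira000pesin}), I would simply invoke it rather than reprove it.
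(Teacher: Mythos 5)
Your proposal matches the paper exactly: the lemma is quoted verbatim from Katok's closing lemma, and the paper gives no proof but simply cites \cite[Theorem S.4.13]{katok1995} (see also \cite[Theorem 15.1.2]{barreira000pesin}), which is precisely what you conclude by doing. Your sketch of the Pesin-block/Lyapunov-chart graph transform argument is a faithful outline of the standard proof of that cited theorem, so there is nothing to correct.
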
 

\subsection{Anosov diffeomorphisms and fiber bunching} 
   Recall that a diffeomorphism  $f:M\to M$ is  called  Anosov, if there exists a $Df$-invariant splitting      $TM=E^s\oplus E^u$ on M, and  $\tau>0$ such that
   \[\|D_xf(v^s)\|<e^{-\tau}<1<e^\tau< \|D_xf(v^u)\| \]
   for any $x\in M$ and unit vectors $v^s\in E^s(x)$ and $v^u\in E^u(x)$. %todo  稳定流形 文献
   \begin{definition}
   	An $\alpha$-H\"older cocycle $\mathcal{A}$ over an Anosov diffeomorphism  $f$ is called \emph{fiber bunched}, if there exists $0<\theta<1$ and $L>0$ such that for any $x\in M$ and $n\in \mathbb{N}$,
   	\[\|\mathcal{A}_x^n\|\cdot\|(\mathcal{A}_x^n)^{-1}\|\cdot e^{-\tau\alpha n}\leq L\theta^n,~~\text{and}\quad \|\mathcal{A}_x^{-n}\|\cdot\|(\mathcal{A}_x^{-n})^{-1}\|\cdot e^{-\tau\alpha n}\leq L\theta^n, \]
%   	where $\nu_x^n=\nu(f^{n-1}x)\cdots\nu(x)$ and $\hat{\nu}_x^{-n}=\hat{\nu}(f^{-n}x)\cdots\hat{\nu}(f^{-1}x)$.
   \end{definition}
 The fiber bunching condition gives existence of stable and unstable holonomies of $\mathcal{A}$ on M. It's a common assumption in many theories, for instance, the continuity of Lyapunov exponents\cite{backesbrownbutler}, the existence of extremal norms\cite{BochiExtremal},  and the cohomology of cocycles\cite{Sado15}. In Theorem \ref{thm B} we do not have the fiber bunching hypothesis. Hence we introduce  a related concept. Let $D(N,\theta)$ be the set of points x satisfying 
 \begin{equation}
 \prod_{j=0}^{k-1}\|\mathcal{A}_{f^{jN}x}^{N}\|\cdot\|(\mathcal{A}_{f^{jN}x}^{N})^{-1}\|\leq e^{kN\theta}, \quad \forall k\geq 1, 
 \end{equation}
 and 
 \begin{equation}\label{2.2}
 \prod_{j=0}^{k-1}\|\mathcal{A}_{f^{-jN}x}^{-N}\|\cdot\|(\mathcal{A}_{f^{-jN}x}^{-N})^{-1}\|\leq e^{kN\theta}, \quad \forall k\geq 1.  
 \end{equation}
  It's known that $\mathcal{A}$ is fiber bunched if and only if $D(N,\theta)=M$ for some $\theta<\tau\alpha$ and $N\geq 1.$ The points in $D(N,\theta)$ for some $\theta<\tau\alpha$  also give the existence of stable and unstable holonomies.
  \begin{proposition}[\cite{Viana08},Proposition 2.5]\label{prop 2.4}
  	Given N, $\theta$ with $\theta<\tau\alpha,$ there exists $L>0$ such that for any $x\in D(N,\theta)$ and $y,z\in W^s_{loc}(x)$, the limit
  	\[H_{y,z}^s=\lim\limits_{n\to\infty}(\mathcal{A}_z^n)^{-1} \mathcal{A}_y^n\]
  	exists and satisfies $\|H_{y,z}^s-Id\|\leq L\cdot d(y,z)^\alpha$ and $H_{x,z}^s=H_{y,z}^sH_{x,y}^s$. Similarly,  for any $x\in D(N,\theta)$ and $y,z\in W^u_{loc}(x)$, the limit
  	\[H_{y,z}^u=\lim\limits_{n\to\infty}(\mathcal{A}_z^{-n})^{-1} \mathcal{A}_y^{-n}\]
  	exists and satisfies $\|H_{y,z}^u-Id\|\leq L\cdot d(y,z)^\alpha$ and $H_{x,z}^u=H_{y,z}^uH_{x,y}^u$.  Moreover, $(y,z)\mapsto H^*_{y,z}$ is continuous for $*\in\{s,u\}$, where  $y,z\in W^*_{loc}(x)$ and $x\in D(N,\theta)$.
  \end{proposition}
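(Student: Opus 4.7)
The plan is to show that the sequence $D_n := (\mathcal{A}_z^n)^{-1}\mathcal{A}_y^n$ is uniformly Cauchy on $W^s_{loc}(x)\times W^s_{loc}(x)$ with a H\"older bound, following the standard Viana-type argument. The starting point is the telescoping identity
$$D_{n+1} - D_n = (\mathcal{A}_z^n)^{-1}\bigl[A(f^n z)^{-1} A(f^n y) - Id\bigr]\mathcal{A}_y^n.$$
The first step is to control the bracket. Since $y, z \in W^s_{loc}(x)$, the stable contraction gives $d(f^n y, f^n z) \leq C e^{-\tau n} d(y,z)$. Combined with the $\alpha$-H\"older continuity of $A$ and the uniform boundedness of $A^{-1}$ on the compact manifold $M$, this yields
$$\|A(f^n z)^{-1} A(f^n y) - Id\| \leq C_1 e^{-\tau\alpha n} d(y,z)^\alpha.$$

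The second and more delicate step is to bound $\|(\mathcal{A}_z^n)^{-1}\|\cdot\|\mathcal{A}_y^n\| \leq C_2 e^{\theta n}$ uniformly in $n$. Grouping the factors into blocks of length $N$ reduces the problem to comparing $\|\mathcal{A}_{f^{jN}w}^N\|\cdot\|(\mathcal{A}_{f^{jN}w}^N)^{-1}\|$ for $w\in\{y,z\}$ with the corresponding product at $x$. Expanding $\mathcal{A}_{f^{jN}w}^N - \mathcal{A}_{f^{jN}x}^N$ telescopically and using the exponential decay $d(f^i w, f^i x) \leq C e^{-\tau i}\mathrm{diam}\,W^s_{loc}$ together with H\"older regularity of $A$ gives a relative perturbation of order $e^{-\tau\alpha jN}$, which is summable in $j$. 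Thus the defining inequality of $D(N,\theta)$ transfers from $x$ to $y$ and $z$ at the cost of a uniform multiplicative constant. Combining the two steps,
$$\|D_{n+1} - D_n\| \leq C_3\,d(y,z)^\alpha\,e^{-(\tau\alpha - \theta)n},$$
which is summable since $\theta < \tau\alpha$. Hence $\{D_n\}$ converges to some $H^s_{y,z}$, and summing the geometric series yields $\|H^s_{y,z} - Id\| \leq L\,d(y,z)^\alpha$; invertibility of $H^s_{y,z}$ follows by running the same argument for the symmetric sequence $(\mathcal{A}_y^n)^{-1}\mathcal{A}_z^n$.

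The cocycle relation $H^s_{x,z} = H^s_{y,z}H^s_{x,y}$ is obtained by passing to the limit in the identity $(\mathcal{A}_z^n)^{-1}\mathcal{A}_x^n = \bigl[(\mathcal{A}_z^n)^{-1}\mathcal{A}_y^n\bigr]\bigl[(\mathcal{A}_y^n)^{-1}\mathcal{A}_x^n\bigr]$. Continuity of $(y,z)\mapsto H^s_{y,z}$ is immediate from the uniform Cauchy estimate, since each $D_n$ is manifestly continuous and the convergence is uniform on $W^s_{loc}(x)\times W^s_{loc}(x)$. The unstable statement is proved by applying the same argument to $f^{-1}$, which exchanges the roles of (2.1) and (2.2) and of $W^s_{loc}$ and $W^u_{loc}$.

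The main obstacle is the second step: transferring the block-growth bound from $x\in D(N,\theta)$ to the nearby points $y,z$. A naive comparison of $n$-fold products would be exponentially large in $n$; the argument succeeds only because the perturbation $\|A(f^i y) - A(f^i x)\|$ decays exponentially thanks to stable contraction, so that the multiplicative errors telescope to a uniformly bounded factor. This is the non-trivial point that essentially uses $y, z \in W^s_{loc}(x)$ rather than merely nearby points.
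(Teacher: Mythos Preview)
The paper does not give its own proof of this proposition: it is quoted as \cite[Proposition~2.5]{Viana08} and used as a black box. Your sketch is precisely the standard argument from that reference, and it is correct as outlined---the telescoping identity, the H\"older bound on the bracket, and the transfer of the block-growth condition from $x$ to $y,z$ via a summable multiplicative perturbation are exactly how Viana proves it.

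One small point worth making explicit in your step~2: to bound $\|(\mathcal{A}_z^n)^{-1}\|\cdot\|\mathcal{A}_y^n\|$ you split it as $\prod_j\|(\mathcal{A}_{f^{jN}z}^N)^{-1}\|\cdot\prod_j\|\mathcal{A}_{f^{jN}y}^N\|$ by submultiplicativity, and then compare each factor separately to the corresponding factor at $x$. The comparison $\|\mathcal{A}_{f^{jN}w}^N - \mathcal{A}_{f^{jN}x}^N\| \leq C_N\,e^{-\tau\alpha jN}$ uses only that $N$ is fixed (so the constant $C_N$ depends on $N$ and $\max\|A^{\pm1}\|$ but not on $j$), and then the product $\prod_j(1 + C'\,e^{-\tau\alpha jN})$ is bounded uniformly in $k$. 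This is what your paragraph says, but it is worth recording that the constant in the block comparison depends on $N$---this is harmless since $N$ is fixed throughout.
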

%%%%%%%%%%%%%%%%%%%%%%%%%%%%%   Section 3   %%%%%%%%%%%%%%%%%%

\section{Proof of Theorem \ref{thm A}}
    %todo  说句话
    Let $f$ be a $C^{1+\gamma}$ diffeomorphism of a compact manifold M, preserving an hyperbolic measure $\mu$. Let $A:M\to GL(X)$ be an $\alpha$-H\"older continuous map satisfying \eqref{1.1}.   We may first assume that $\mu$ is an ergodic  measure. The general case will be considered in the subsection 3.2.
   \subsection{The case $\mu$ is ergodic.}
     Since the cocycle $\mathcal{A}$ generated by $A$ is  continuous,   the Subadditive Ergodic Theorem yields that there exists a set $\mathcal{R}$ of $\mu$-full measure  such that for any $x\in \mathcal{R},$  
     \[\lambda_+(\mathcal{A},\mu)=\lim\limits_{n\to +\infty} \frac{1}{n}\int\log \|\mathcal{A}_x^n\|d\mu=\lim\limits_{n\to +\infty} \frac{1}{n}\log \|\mathcal{A}_x^n\|, \]
     and
     \begin{align*}
      -\lambda_-(\mathcal{A},\mu)=\lim\limits_{n\to +\infty} \frac{1}{n}\int\log \|(\mathcal{A}_x^n)^{-1}\|d\mu & =\lim\limits_{n\to +\infty} \frac{1}{n}\int\log \|\mathcal{A}_x^{-n}\|d\mu\\
      &=\lim\limits_{n\to +\infty} \frac{1}{n}\log \|\mathcal{A}_x^{-n}\|. 
     \end{align*} 
     By  \cite[Theorem 1.5]{Klinin18},  the upper Lyapunov exponent $\lambda_+(\mathcal{A},\mu)$ and lower Lyapunov exponent $\lambda_-(\mathcal{A},\mu)$ can be approximated in terms of the norms of its periodic date, that is, for any $\varepsilon>0$,  there exists a periodic point $p=f^n(p)$ such that 
     \[\big|\lambda_+(\mathcal{A},\mu)-\frac{1}{n}\log \|\mathcal{A}_p^n\|\big|<\varepsilon, \quad \big|\lambda_-(\mathcal{A},\mu)-\frac{1}{n}\log \|(\mathcal{A}_p^n)^{-1}\|^{-1}\big|<\varepsilon.\]
    Thus \eqref{1.1} implies $\lambda_+(\mathcal{A},\mu)=\lambda_-(\mathcal{A},\mu)=0.$ Then for a fixed $\varepsilon>0$ and any point $x\in \mathcal{R}$, we define the \emph{Lyapunov norm}  $\|\cdot\|_x=\|\cdot\|_{x,\varepsilon}$ in $X$  as follows:
    \[\|u\|_x:=\sum_{n=-\infty}^{+\infty}\|\mathcal{A}_x^n(u)\|e^{-\varepsilon|n|},\quad \forall u\in X. \]
    By \cite[Proposition 3.1]{Kalinin16}, the Lyapunov norm satisfies the following properties:
    \begin{enumerate}
    	\item  For any $x\in \mathcal{R},$ 
    	 \begin{equation}\label{3.1}
    	 e^{-\varepsilon}\|u\|_x\leq\|A(x)u \|_{fx}\leq e^\varepsilon\|u\|_x,\quad \forall u\in X.
    	 \end{equation}
    	\item    There exists an $f$-invariant subset $\mathcal{R}_\varepsilon\subset \mathcal{R}$ with $\mu(\mathcal{R}_\varepsilon)=1$ and a measurable function $K_\varepsilon(x)$ such that for any $x\in \mathcal{R}_\varepsilon$,
    	   \begin{equation}\label{3.2}
    	    \|u\|\leq \|u\|_x \leq K_\varepsilon(x)\|u\|,\quad \forall u\in X,~ \text{and}
    	   \end{equation}
    	   \begin{equation}\label{3.3}
    	    K_\varepsilon(x)e^{-\varepsilon}\leq K_\varepsilon(fx)\leq K_\varepsilon(x)e^\varepsilon.
    	   \end{equation}
    \end{enumerate}
 
     For any $l\geq 1,$ we define 
     \begin{equation}\label{3.4}
       \mathcal{R}_{\varepsilon,l}=\{x\in \mathcal{R}_\varepsilon: K_\varepsilon(x)\leq l  \}.
     \end{equation}
      Then $\mu( \mathcal{R}_{\varepsilon,l})\to 1$ as $n\to \infty$. Without loss of generality, we may assume $\mathcal{R}_{\varepsilon,l}$ is a compact set by using the Lusin's theorem.
      
      We use the Lyapunov norm to estimate the norm of  $\mathcal{A}$ along an orbit segment that is  close to  a regular one. Denote $x_i=f^i(x), y_i=f^i(y).$  Let $\varepsilon_{0}=\frac{1}{4}\lambda\alpha.$%todo 1/4
\begin{lemma}\label{lemma 3.1}
      	Let $f,A,\mu$ be as above.  Then for any $l>1,~0<\varepsilon<\varepsilon_{0}$, there exist~$ \delta_1,c_1>0$, such that for any~$x,f^{n}(x)\in \mathcal{R}_{\varepsilon,l},~y\in M, 0<\delta<\delta_1$ satisfying  $d(f^{i}(x),f^{i}(y))\leq \delta e^{-\lambda\min\{i,n-i\}},i=0,\cdots,n$, we have:
      	$$c_1^{-1}e^{-2\varepsilon i}\leq m(\mathcal{A}_y^i)\leq\|\mathcal{A}_y^i\|\leq c_1e^{2\varepsilon i},$$
      	$$c_1^{-1}e^{-2\varepsilon (n-i)}\leq m(\mathcal{A}_{y_i}^{n-i})\leq\|\mathcal{A}_{y_i}^{n-i}\|\leq c_1e^{2\varepsilon(n-i)},$$
      	where $m(B):=\inf\limits_{\|v\|=1}\|Bv\|=\|B^{-1}\|^{-1}$.
\end{lemma}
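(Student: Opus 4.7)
The plan is to estimate $\mathcal{A}_y^i$ by comparing it with $\mathcal{A}_x^i$ in the Lyapunov norm $\|\cdot\|_{x_j}$ constructed along the reference orbit $x_j=f^j(x)$. Property \eqref{3.1} contracts or expands $A(x_j)$ by a factor in $[e^{-\varepsilon},e^\varepsilon]$ in this norm, and I will show that the same holds for $A(y_j)$ up to a multiplicative error that is summable over $j$; iterating then delivers the claim.

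Write $M_j=\min\{j,n-j\}$ and let $H$ denote the $\alpha$-H\"older constant of $A$. The shadowing hypothesis and H\"older continuity give $\|A(y_j)-A(x_j)\|\leq H\delta^\alpha e^{-\lambda\alpha M_j}$. Since $x_0,x_n\in\mathcal{R}_{\varepsilon,l}$, iterating \eqref{3.3} forward from $x_0$ and backward from $x_n$ yields $K_\varepsilon(x_j)\leq le^{\varepsilon M_j}$ for every $0\leq j\leq n$. Combining the triangle inequality in $\|\cdot\|_{x_{j+1}}$ with \eqref{3.1} and \eqref{3.2}, and using $\|u\|\leq\|u\|_{x_j}$ together with $M_{j+1}\leq M_j+1$, produces
$$\|A(y_j)u\|_{x_{j+1}}\leq e^\varepsilon\|u\|_{x_j}+K_\varepsilon(x_{j+1})\|A(y_j)-A(x_j)\|\,\|u\|\leq e^\varepsilon(1+\eta_j)\|u\|_{x_j},$$
with $\eta_j:=lH\delta^\alpha e^{-(\lambda\alpha-\varepsilon)M_j}$. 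A symmetric argument gives the matching lower bound $\|A(y_j)u\|_{x_{j+1}}\geq e^{-\varepsilon}(1-\eta_j)\|u\|_{x_j}$.

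Because $\varepsilon<\varepsilon_0=\lambda\alpha/4$, the series $\sum_j e^{-(\lambda\alpha-\varepsilon)M_j}$ is dominated by $2/(1-e^{-(\lambda\alpha-\varepsilon)})$ independently of $n$, and hence $\sum_j\eta_j=O(\delta^\alpha)$. Choose $\delta_1>0$ so small that for $\delta<\delta_1$ both $\prod_j(1+\eta_j)$ and $\prod_j(1-\eta_j)^{-1}$ are uniformly bounded by a constant $C_0$. Iterating the one-step estimates yields
$$C_0^{-1}e^{-\varepsilon i}\|u\|_{x_0}\leq\|\mathcal{A}_y^i u\|_{x_i}\leq C_0 e^{\varepsilon i}\|u\|_{x_0}.$$
Converting back to the ambient norm via \eqref{3.2}, with $K_\varepsilon(x_0)\leq l$ on the upper side and $K_\varepsilon(x_i)\leq l e^{\varepsilon i}$ on the lower side, gives the first pair of inequalities with $c_1:=C_0 l$.

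The second pair follows from the identical computation, now anchored at $x_n$ instead of $x_0$: iterate from $x_i$ to $x_n$, use $K_\varepsilon(x_n)\leq l$ together with $K_\varepsilon(x_j)\leq l e^{\varepsilon(n-j)}$ for $i\leq j\leq n$, and repeat the argument above. The main (though mild) obstacle throughout is keeping the accumulated distortion uniformly bounded along the entire orbit segment; this is exactly what the choice $\varepsilon<\varepsilon_0=\lambda\alpha/4$ secures, ensuring that $\sum_j e^{-(\lambda\alpha-\varepsilon)M_j}$ is summable uniformly in $n$ so that $\delta_1$ and $c_1$ can be chosen independently of $i$ and $n$.
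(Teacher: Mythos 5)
Your argument is essentially the paper's own proof: both compare $A(y_j)$ with $A(x_j)$ in the Lyapunov norm along the reference orbit, use the H\"older continuity and the tempered bound $K_\varepsilon(x_j)\leq le^{\varepsilon\min\{j,n-j\}}$ to make the one-step errors summable uniformly in $n$, and then convert back to the ambient norm via \eqref{3.2}, anchoring at $x_n$ for the second pair of inequalities. The only slip is cosmetic: the one-step lower bound should read $e^{-\varepsilon}\bigl(1-e^{2\varepsilon}\eta_j\bigr)\|u\|_{x_j}$ rather than $e^{-\varepsilon}(1-\eta_j)\|u\|_{x_j}$, a bounded constant that your choice of $\delta_1$ and $C_0$ absorbs without affecting anything.
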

  \begin{proof}
      We only prove $c_1^{-1}e^{-2\varepsilon i}\leq m(\mathcal{A}_y^i)\leq\|\mathcal{A}_y^i\|\leq c_1e^{2\varepsilon i},$ the other one can be obtained in a similar fashion.
      
      For any $0\leq j\leq n-1$, $u\in X$ and $0<\varepsilon<\varepsilon_{0}$, by \eqref{3.1} and \eqref{3.2}, 
      \begin{align*}
      \|A(y_j)u \|_{x_{j+1}}= & ~\|\big(A(y_j)-A(x_j)+A(x_j)\big)u \|_{x_{j+1}}  \\
      \geq & ~\|A(x_j)u \|_{x_{j+1}}-\|\big(A(y_j)-A(x_j)\big)u  \|_{x_{j+1}} \\
      \geq  & ~e^{-\varepsilon}\|u\|_{x_j}-K_{\varepsilon}(x_{j+1})\|A(y_j)-A(x_j) \|\cdot\|u\|_{x_{j}}.
      \end{align*}
      Since $x,f^{n}x\in \mathcal{R}_{\varepsilon,l}$,  it follows from \eqref{3.3} that
      $$K_{\varepsilon}(x_{j+1}) \leq le^{\varepsilon \min\{j+1,n-j-1\}}\leq le^\varepsilon\cdot e^{\varepsilon \min\{j,n-j\}}.$$
      Note that $A(x)$ is $\alpha$-H\"{o}lder continuous, one has
      \[\|A(y_{j})-A(x_{j})\|\leq c_0\cdot d(y_{j},x_{j})^{\alpha}\leq c_0\delta^\alpha e^{-\lambda\alpha \min\{j,n-j\}}. \]	
      Hence for any $0\leq j\leq n-1$,  $u\in X$,
      \begin{equation*}
      K_{\varepsilon}(x_{j+1})\|A(y_j)-A(x_j) \|\cdot\|u\|_{x_{j}}
      \leq  c_0le^\varepsilon \delta^\alpha e^{(\varepsilon-\lambda\alpha )\min\{j,n-j\}}\|u\|_{x_j}.
      \end{equation*}
      Therefore,  the previous inequality gives    
      \begin{align*}
      \|A(y_j)u\|_{x_{j+1}}  \geq   \big(e^{-\varepsilon}-c_0le^\varepsilon \delta^\alpha e^{(\varepsilon-\lambda\alpha )\min\{j,n-j\}}\big)\|u\|_{x_j}. 
      \end{align*}
       Similarly, 
      \begin{equation*}
      \|A(y_j)u\|_{x_{j+1}}\leq  \big(e^{\varepsilon}+c_0le^\varepsilon \delta^\alpha e^{(\varepsilon-\lambda\alpha )\min\{j,n-j\}}\big)\|u\|_{x_j}. 
      \end{equation*}  
      Thus for any $v\in X$, we conclude
      \begin{equation*}\begin{split}
      \|\mathcal{A}_{y}^{i}(v)\| & \geq K_\varepsilon(x_i)^{-1}\|\mathcal{A}_{y}^{i}(v)\|_{x_i}\\
      & \geq~l^{-1}e^{-\varepsilon i}\prod\limits_{j=0}^{i-1}\big(e^{-\varepsilon}-c_0le^\varepsilon \delta^\alpha e^{(\varepsilon-\lambda\alpha     )\min\{j,n-j\}}\big)\|v\|_{x_0}\\
      & \geq l^{-1}e^{-2\varepsilon i}\prod\limits_{j=0}^{i-1}\big(1-c_0le^{2\varepsilon} \delta^\alpha e^{(\varepsilon-\lambda\alpha     )\min\{j,n-j\}}\big)\|v\|.                            
      \end{split}	\end{equation*}
      Take $\delta_1>0$ small enough such that $1-2c_0le^{2\varepsilon} \delta_1^\alpha>0.$ Then for any $0<\delta<\delta_1$, since $\varepsilon-\lambda \alpha<0$, we can estimate 
      \begin{align*}
      \sum\limits_{j=0}^{n-1}\log\big(1-c_0le^{2\varepsilon}\delta^\alpha e^{(\varepsilon-\lambda\alpha)\min\{j,n-j\}}\big)
      \geq & ~ -\sum\limits_{j=0}^{n-1} 2c_0le^{2\varepsilon}\delta^\alpha e^{(\varepsilon-\lambda\alpha)\min\{j,n-j\}}\\
      \geq &~ -\tilde{c}_0,
      \end{align*}
      where $\tilde{c}_0=\tilde{c}_0(l,\varepsilon)$ is a constant. It follows that
       \begin{equation*}\begin{split}
      m(\mathcal{A}_{y}^{i})=\inf\limits_{\|v\|=1}\|\mathcal{A}_{y}^{i}(v)\|  & \geq l^{-1}e^{-2\varepsilon i}\prod\limits_{j=i}^{n-1}\big(1-c_0le^{2\varepsilon} \delta^\alpha e^{(\varepsilon-\lambda\alpha     )\min\{j,n-j\}}\big)\\
      & \geq   l^{-1} e^{-\tilde{c}_0} e^{-2\varepsilon i}\\
      & =:c_1^{-1}e^{-2\varepsilon i}.                         
      \end{split}	\end{equation*}
      
      Similarly, we can also obtain  
      \begin{equation*}\begin{split}
      \|\mathcal{A}_{y}^{i}(v)\|\leq &  ~\|\mathcal{A}_{y}^{i}(v)\|_{x_i}= ~\|A(y_{i-1})\cdots A(y)v \|_{x_i}\\
      \leq &~\prod\limits_{j=0}^{i-1}\big(e^{\varepsilon}+c_0le^\varepsilon\delta^\alpha e^{(\varepsilon-\lambda\alpha     )\min\{j,n-j\}}\big)\|v\|_{x_0}\\
      \leq &~le^{\varepsilon i}\prod\limits_{j=0}^{i-1}\big(1+c_0l\delta^\alpha e^{(\varepsilon-\lambda\alpha)\min\{j,n-j\}}\big)\|v\|\\
      \leq & ~ c_1e^{\varepsilon i}\|v\|,                         
      \end{split}	\end{equation*}
     which implies $\|\mathcal{A}_{y}^{i}\|\leq c_1e^{\varepsilon i}$. This finishes the proof.      
  \end{proof}
      For a fixed $\varepsilon<\varepsilon_0$, denote~$G_{l}=\Lambda\cap\mathcal{R}_{\varepsilon,l},$ where $\Lambda$ is given by Lemma \ref{fengbi}.  Then for~$l$ large enough, we have~$\mu(G_{l})>0.$ Let $~G^{\prime}_{l}=\text{supp}(\mu_{G_{l}})$, where ~$\mu_{G_{l}}$ is defined by:~
      $\mu_{G_{l}}(B):=\mu(B\cap G_{l})/\mu(G_{l}).$
      
      We need the following lemma to find  a dense orbit in $G^{\prime}_{l}$.
        \begin{lemma}[\cite{ZouCao18Livsic} Lemma 4.1]\label{lemma 3.2}
        	Let $f$ be a continuous map of a compact metric space $M$, preserving an ergodic measure $\mu$. Suppose $D$ is a closed set with $\mu(D)>0$, and $E=\{x\in \text{\em{supp}}(\mu_{D}):\overline{\mathcal{O}(x)\cap \text{\em{supp}}(\mu_{D})}=\text{\em{supp}}(\mu_{D})\}$, where ~$\mu_{D}$ is defined by
        	$\mu_{D}(B):=\mu(B\cap D)/\mu(D)$. Then ~$\mu(E)=\mu(\text{\em{supp}}(\mu_{D}))=\mu(D)$.
        \end{lemma}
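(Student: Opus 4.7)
The plan is a standard second-countable-base plus ergodic-theorem argument, with one small subtlety about making sure the orbit hits $\text{supp}(\mu_{D})$ itself and not merely $D$.

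First I would establish the easy equality $\mu(\text{supp}(\mu_{D}))=\mu(D)$. Since $D$ is closed and $\mu_{D}$ is supported in $D$, the support $\text{supp}(\mu_{D})$ is contained in $D$. On the other hand $\mu_{D}(\text{supp}(\mu_{D}))=1$, so by definition of $\mu_{D}$ we have $\mu(\text{supp}(\mu_{D})\cap D)=\mu(D)$; combined with $\text{supp}(\mu_{D})\subseteq D$, this gives $\mu(\text{supp}(\mu_{D}))=\mu(D)$.

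For the nontrivial equality $\mu(E)=\mu(\text{supp}(\mu_{D}))$, I would exploit that $\text{supp}(\mu_{D})$, being closed in the compact metric space $M$, is second countable. Choose a countable family $\{U_{n}\}_{n\geq 1}$ of open subsets of $M$ such that $\{U_{n}\cap\text{supp}(\mu_{D})\}_{n\geq 1}$ is a base for the relative topology of $\text{supp}(\mu_{D})$ with every member non-empty. Set $B_{n}:=U_{n}\cap\text{supp}(\mu_{D})$. Each $B_{n}$ is Borel, and by the definition of support, $\mu_{D}(U_{n})>0$, hence $\mu(B_{n})=\mu(U_{n}\cap\text{supp}(\mu_{D}))=\mu_{D}(U_{n})\cdot\mu(D)>0$. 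This is the step where I must intersect with $\text{supp}(\mu_{D})$ rather than with $D$: otherwise the orbit could visit the open set $U_{n}$ through points of $D\setminus\text{supp}(\mu_{D})$ and I would not conclude density of $\mathcal{O}(x)\cap\text{supp}(\mu_{D})$ in $\text{supp}(\mu_{D})$.

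Now ergodicity of $\mu$ combined with Birkhoff's ergodic theorem applied to the characteristic function of each $B_{n}$ shows that for each $n$ the set $F_{n}:=\{x\in M:f^{k}(x)\in B_{n}\text{ for some }k\geq 0\}$ has $\mu(F_{n})=1$. Hence $F:=\bigcap_{n\geq 1}F_{n}$ has $\mu(F)=1$. For any $x\in F\cap\text{supp}(\mu_{D})$, the orbit $\mathcal{O}(x)$ meets every basic open set $U_{n}\cap\text{supp}(\mu_{D})$ inside $\text{supp}(\mu_{D})$, so $\mathcal{O}(x)\cap\text{supp}(\mu_{D})$ is dense in $\text{supp}(\mu_{D})$, i.e. $x\in E$. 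Therefore $E\supseteq F\cap\text{supp}(\mu_{D})$, giving $\mu(E)\geq\mu(\text{supp}(\mu_{D}))$, and the reverse inequality is immediate from $E\subseteq\text{supp}(\mu_{D})$. There is no serious obstacle in this argument; the only point that requires care is precisely the observation above that the basic sets must be cut down to $\text{supp}(\mu_{D})$ before invoking ergodicity.
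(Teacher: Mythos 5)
Your proof is correct, and it is essentially the standard argument behind this lemma (which the paper only cites from \cite{ZouCao18Livsic} rather than reproving): reduce to a countable base of relative open sets of $\text{supp}(\mu_{D})$, note each has positive $\mu$-measure because $\mu(D\setminus\text{supp}(\mu_{D}))=0$, and apply the Birkhoff ergodic theorem to conclude that almost every orbit visits all of them. Your emphasis on intersecting the basic open sets with $\text{supp}(\mu_{D})$ rather than with $D$ is exactly the right point of care, and no step is missing.
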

      	Since~$G_{l}$ is closed,  we have$~G^{\prime}_{l}\subset G_{l}$. By Lemma \ref{lemma 3.2}, one can find a point~$z\in G^{\prime}_{l}\subset\Lambda\cap\mathcal{R}_{\varepsilon,l}$, such that~$\overline{\mathcal {O}(z)\cap G^{\prime}_{l}}=G^{\prime}_{l}.$  We define the map $C:\mathcal {O}(z)\cap G^{\prime}_{l}\to GL(X)$  by $C(f^nz)=\mathcal{A}_z^n,$ for any $f^nz\in G^{\prime}_{l}$. 
      	We shall prove that~$C$ is uniformly continuous on~$\mathcal {O}(z)\cap G^{\prime}_{l}$, so that $C$ can be extended to $~G^{\prime}_{l}$. 
      	
%%%%%%%%%% lemma 3.3 %%%%%%%%%%%%%%%%	
\begin{lemma}\label{lemma 3.3}
	Given any~$l>1,~0<\varepsilon<\varepsilon_{0}$, there exists~$0<\delta_2<1,c_2>0$, such that for any $0<\delta<\delta_2$, there exists $\beta>0,$ if $x,f^{n}x\in G_{l}$ satisfying~$d(x,f^{n}x)\leq \beta$, then $d(\mathcal{A}_x^n,Id)\leq c_2 \delta^\alpha.$ 	
\end{lemma}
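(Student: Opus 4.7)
The plan is to apply Katok's closing lemma to shadow $x$ by a periodic orbit, use the cocycle obstruction~\eqref{1.1} to identify $\mathcal{A}_p^n$ with $\mathrm{Id}$, and then compare $\mathcal{A}_x^n$ with $\mathcal{A}_p^n$ through a multiplicative (rather than additive) decomposition. Since $x,f^n x\in G_l\subset\Lambda\cap\mathcal{R}_{\varepsilon,l}$, for any $\delta<\delta_1$, Lemma~\ref{fengbi} furnishes a $\beta=\beta(\delta)$ so that $d(x,f^n x)<\beta$ produces a periodic point $p=f^n p$ with $d(f^i p,f^i x)\le\delta e^{-\lambda\min\{i,n-i\}}$, and~\eqref{1.1} forces $\mathcal{A}_p^n=\mathrm{Id}$.

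The crucial input is that periodicity upgrades Lemma~\ref{lemma 3.1} to a \emph{symmetric} estimate: since $\mathcal{A}_p^n=\mathrm{Id}$ implies $(\mathcal{A}_p^i)^{-1}=\mathcal{A}_{p_i}^{n-i}$, the two bounds of Lemma~\ref{lemma 3.1} applied with $y=p$ combine to give
\[
\|\mathcal{A}_p^i\|,\ \|(\mathcal{A}_p^i)^{-1}\|\le c_1 e^{2\varepsilon\min\{i,n-i\}}\qquad(0\le i\le n),
\]
which, unlike the one-sided $c_1 e^{2\varepsilon i}$, stays controlled near both endpoints of the orbit.

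Set $\Phi_i:=(\mathcal{A}_p^i)^{-1}\mathcal{A}_x^i$, so $\Phi_0=\mathrm{Id}$ and $\Phi_n=\mathcal{A}_x^n$. A direct computation yields the recursion $\Phi_{i+1}=F_i\Phi_i$ with $F_i:=(\mathcal{A}_p^i)^{-1}\bigl[A(p_i)^{-1}A(x_i)\bigr]\mathcal{A}_p^i$. Combining the H\"older bound $\|A(x_i)-A(p_i)\|\le c_0\delta^\alpha e^{-\lambda\alpha\min\{i,n-i\}}$, the uniform bound $M_0:=\sup_M\|A^{-1}\|<\infty$, and the symmetric estimate above, one obtains
\[
\|F_i-\mathrm{Id}\|\le c_1^2 M_0 c_0\,\delta^\alpha\, e^{(4\varepsilon-\lambda\alpha)\min\{i,n-i\}}.
\]
Since $\varepsilon<\varepsilon_0=\tfrac14\lambda\alpha$, the exponent is negative, so $\sum_{i=0}^{n-1}\|F_i-\mathrm{Id}\|\le C_2\delta^\alpha$ uniformly in $n$. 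Choosing $\delta_2$ so small that $C_2\delta^\alpha<\tfrac12$, the telescoping identity $\Phi_n-\mathrm{Id}=\sum_i(F_i-\mathrm{Id})F_{i-1}\cdots F_0$, together with $\|F_i\|,\|F_i^{-1}\|\le 2$, produces $\|\mathcal{A}_x^n-\mathrm{Id}\|\le C_3\delta^\alpha$; the analogous bound for $\|(\mathcal{A}_x^n)^{-1}-\mathrm{Id}\|$ follows from $\|F_i^{-1}-\mathrm{Id}\|\le 2\|F_i-\mathrm{Id}\|$. Adding the two yields $d(\mathcal{A}_x^n,\mathrm{Id})\le c_2\delta^\alpha$.

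The main obstacle is that a naive additive telescoping of $\mathcal{A}_x^n-\mathcal{A}_p^n$ produces partial terms of size $\sim\delta^\alpha e^{2\varepsilon n}$ that do not sum to an $n$-independent bound, because bounds on $\|\mathcal{A}_{x_{i+1}}^{n-i-1}\|$ and $\|\mathcal{A}_p^i\|$ accumulate from both ends. Reorganizing the argument as a product of conjugated near-identity operators, and exploiting the symmetric periodic estimate to tame each conjugation $(\mathcal{A}_p^i)^{-1}(\cdot)\mathcal{A}_p^i$, is precisely what converts the problem to a summable series and gives an estimate independent of $n$.
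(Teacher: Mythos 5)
Your argument is correct, but it organizes the key estimate differently from the paper. Both proofs begin the same way: Katok's closing lemma (Lemma \ref{fengbi}) yields a periodic point $p=f^np$ with $d(f^ip,f^ix)\le\delta e^{-\lambda\min\{i,n-i\}}$, hypothesis \eqref{1.1} gives $\mathcal{A}_p^n=Id$, and periodicity combined with Lemma \ref{lemma 3.1} yields exactly the symmetric bound you use, $\|\mathcal{A}_p^i\|,\ \|(\mathcal{A}_p^i)^{-1}\|\le c_1e^{2\varepsilon\min\{i,n-i\}}$ (the paper records it as $\|\mathcal{A}_p^i\|=\|(\mathcal{A}_{p_i}^{n-i})^{-1}\|\le c_1e^{2\varepsilon\min\{i,n-i\}}$). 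From there the paper keeps the additive telescoping $\mathcal{A}_x^n-\mathcal{A}_p^n=\sum_i\mathcal{A}_{x_{i+1}}^{n-i-1}\circ(A(x_i)-A(p_i))\circ\mathcal{A}_p^i$, controls the half of the sum with $\min\{i,n-i\}=i$ by the self-referential bound $\|\mathcal{A}_{x_{i+1}}^{n-i-1}\|\le\|\mathcal{A}_x^n\|\cdot\|(\mathcal{A}_x^{i+1})^{-1}\|$, and then bootstraps the inequality $\|\mathcal{A}_x^n\|-1\le\tilde c_1\delta^\alpha\|\mathcal{A}_x^n\|+\tilde c_1\delta^\alpha$ to get $\|\mathcal{A}_x^n\|\le 3/2$, finishing with a Neumann series for the inverse; so your closing claim that the additive telescoping ``does not sum'' is true only of its naive form, not of the paper's version. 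Your multiplicative decomposition $\mathcal{A}_x^n=F_{n-1}\cdots F_0$ with $F_i=(\mathcal{A}_p^i)^{-1}A(p_i)^{-1}A(x_i)\mathcal{A}_p^i$ avoids the bootstrap entirely and treats $\mathcal{A}_x^n$ and $(\mathcal{A}_x^n)^{-1}$ symmetrically, at the modest cost of invoking $M_0=\sup_M\|A^{-1}\|<\infty$ (legitimate: $A$ is continuous into $(GL(X),d)$ and $M$ is compact, exactly as the paper uses later with its constant $R$). One point of precision: the per-factor bounds $\|F_i\|,\|F_i^{-1}\|\le2$ alone would only give $\|F_{i-1}\cdots F_0\|\le 2^i$, which is useless; what actually controls the partial products uniformly in $n$ is $\prod_j(1+\|F_j-Id\|)\le\exp\bigl(\sum_j\|F_j-Id\|\bigr)\le e^{1/2}$, which follows from the summability $\sum_j\|F_j-Id\|\le C_2\delta^\alpha<\tfrac12$ that you established; with that reading every step is justified and the conclusion $d(\mathcal{A}_x^n,Id)\le c_2\delta^\alpha$ holds with constants independent of $n$.
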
 

   \begin{proof}
   	Since   $f$ has the closing property on $G_{l}\subset \Lambda$ , there exists $\lambda>0$, such that for any $0<\delta<1$, there exists $\beta>0,$ such that if  $x,f^nx\in G_{l}$ with $d(x,f^nx)<\beta$, then one can find a periodic point $p=f^np\in M$, such that $$d(f^{i}p,f^{i}x)\leq \delta\cdot e^{-\lambda\min\{i,n-i\}},~\quad \forall~ i=0,\cdots,n.$$
   	We estimate $\|\mathcal{A}_x^n-Id\|$ first.
   	\begin{align*}
   	 \mathcal{A}_x^n-\mathcal{A}_p^n
   	= & ~\mathcal{A}_{x_1}^{n-1}\circ(A(x_0)-A(p_0))+(\mathcal{A}_{x_1}^{n-1}-\mathcal{A}_{p_1}^{n-1})\circ A(p_0)\\
   	= & ~\mathcal{A}_{x_1}^{n-1}\circ(A(x_0)-A(p_0))+ \mathcal{A}_{x_2}^{n-2}\circ(A(x_1)-A(p_1))\circ A(p_0)+\\
   	& (\mathcal{A}_{x_2}^{n-2}-\mathcal{A}_{p_2}^{n-2})\circ\mathcal{A}_p^2)\\
   	= & ~\cdots=\sum_{i=0}^{n-1}\mathcal{A}_{x_{i+1}}^{n-i-1}\circ(A(x_i)-A(p_i))\circ\mathcal{A}_p^i.
   	\end{align*}
   	Since $\mathcal{A}_p^n=Id$, by Lemma \ref{lemma 3.1} , for any $0\leq i\leq n$, 
   	$$\|\mathcal{A}_p^i\|=\|(\mathcal{A}_{p_{i}}^{n-i})^{-1}\|\leq c_1e^{2\varepsilon \min\{i,n-i\}}.$$
   	Note that $\|A(x_i)-A(p_i)\|\leq c_0\delta^\alpha e^{-\lambda\alpha \min\{i,n-i\}}.$ Denote $m=\left\lfloor \frac{n}{2}\right\rfloor,$ then Lemma \ref{lemma 3.1} and  the fact $\varepsilon<\varepsilon_{0}$ give 
   	\begin{align*}
   	& \sum_{i=0}^{m}\|\mathcal{A}_{x_{i+1}}^{n-i-1}\|\cdot\|A(x_i)-A(p_i)\|\cdot\|\mathcal{A}_p^i\|\\
   	\leq & ~ \sum_{i=0}^{m}\|\mathcal{A}_x^n\|\cdot\|(\mathcal{A}_x^{i+1})^{-1}\| \cdot\|A(x_i)-A(p_i)\|\cdot\|\mathcal{A}_p^i\|\\
   	\leq &~ \|\mathcal{A}_x^n\|\cdot\sum_{i=0}^{m}c_1e^{2\varepsilon (i+1)}\cdot c_0\delta^\alpha e^{-\lambda\alpha i}\cdot c_1e^{2\varepsilon i}\\
   	\leq &~ \tilde{c}_1\delta^\alpha\cdot\|\mathcal{A}_x^n\|,
   	\end{align*}
   	and
   	\begin{align*}
   	& \sum_{i=m+1}^{n}\|\mathcal{A}_{x_{i+1}}^{n-i-1}\|\cdot\|A(x_i)-A(p_i)\|\cdot\|\mathcal{A}_p^i\|\\
   	\leq &~ \sum_{i=m+1}^{n}c_1e^{2\varepsilon (n-i-1)}\cdot c_0\delta^\alpha e^{-\lambda\alpha (n-i)}\cdot c_1e^{2\varepsilon (n-i)}\\
   	\leq &~ \tilde{c}_1\delta^\alpha,
   	\end{align*}
   	where $\tilde{c}_1$ is a constant. Therefore, 
   	\begin{equation}\label{3.5}
   	\begin{split}
   	\|\mathcal{A}_x^n\|-1 & \leq \|\mathcal{A}_x^n-\mathcal{A}_p^n\| \\
   	& \leq \sum_{i=0}^{n-1}\|\mathcal{A}_{x_{i+1}}^{n-i-1}\|\cdot\|A(x_i)-A(p_i)\|\cdot\|\mathcal{A}_p^i\|\\
   	& \leq  \tilde{c}_1\delta^\alpha\|\mathcal{A}_x^n\|+\tilde{c}_1\delta^\alpha.    
   	\end{split}   
   	\end{equation}
   	Take $\delta_2$ small enough such that $\tilde{c}_1\delta_2^\alpha<\frac{1}{5}.$ Then for any $0<\delta<\delta_2,$ we obtain 
   	$$\|\mathcal{A}_x^n\|\leq \frac{1+\tilde{c}_1\delta_2^\alpha}{1-\tilde{c}_1\delta_2^\alpha}\leq \frac{3}{2}.$$
   	Thus equation \eqref{3.5} gives
   	\begin{align*}
   	\|\mathcal{A}_x^n-Id\|& =\|\mathcal{A}_x^n-\mathcal{A}_p^n\| \\
   	& \leq  \tilde{c}_1\delta^\alpha\|\mathcal{A}_x^n\|+\tilde{c}_1\delta^\alpha\\
   	& \leq   \frac{5}{2}\tilde{c}_1\delta^\alpha.   
   	\end{align*}
   
   	To estimate $\|(\mathcal{A}_x^n)^{-1}-Id\|.$ Let $Y=Id-\mathcal{A}_x^n$, then
   	$$(\mathcal{A}_x^n)^{-1}=(Id-Y)^{-1}=Id+Y+Y^2+\cdots.$$
   Since $\|Y\|=\|Id-\mathcal{A}_x^n\|\leq \frac{5}{2}\tilde{c}_1\delta_2^\alpha \leq1/2,$ we have 
   	\begin{align*}
   	\|(\mathcal{A}_x^n)^{-1}-Id\|  \leq \sum_{i=1}^{\infty}\|Y^i\| 
   	\leq  \sum_{i=1}^{\infty}(\frac{5}{2}\tilde{c}_1\delta^\alpha)^i
   	\leq  5\tilde{c}_1\delta^\alpha.
   	\end{align*}
   	Therefore, $d(\mathcal{A}_x^n,Id)=\|\mathcal{A}_x^n-Id\|+\|(\mathcal{A}_x^n)^{-1}-Id\|\leq \frac{15}{2}\tilde{c}_1\delta^\alpha=:c_2\delta^\alpha.$ This completes the proof.  
   \end{proof}     
   
   We now prove the uniform continuity of  $C$ on~$\mathcal {O}(z)\cap G^{\prime}_{l}$, that is, for any~$\delta>0,$ there exist~$\beta,c_3>0,$ such that for any~$x,f^nx\in \mathcal {O}(z)\cap G^{\prime}_{l}$ satisfying $d(x,f^nx)<\beta,$ one has~$d(C(x),C(f^nx))\leq c_3\delta^\alpha.$ Suppose $x=f^k(z)$, then
    \begin{equation}\label{3.6}
    C(f^nx)C(x)^{-1}=\mathcal{A}_z^{n+k}(\mathcal{A}_z^k)^{-1}=\mathcal{A}_x^n.
    \end{equation} Therefore,
   \begin{align*}
   ~d(C(x),C(f^nx)) & =\|C(x)-C(f^nx)\|+\|C(x)^{-1}-C(f^nx)^{-1}\|\\
   & \leq \|Id-\mathcal{A}_x^n\|\cdot\|C(x)\|+\|C(x)^{-1}\|\cdot\|Id-(\mathcal{A}_x^n)^{-1}\|\\
   &\leq (\|C(x)\|+\|C(x)^{-1}\|)\cdot c_2\delta^\alpha.
   \end{align*} 
   It's enough to prove that~$\|C(x)\|$ and $\|C(x)^{-1}\|$ are uniformly bounded on~$\mathcal {O}(z)\cap G^{\prime}_{l}$.   
   By Lemma \ref{lemma 3.3}, for any $\delta<\delta_2,$ there exists $\beta>0$ such that for any~$x,f^nx\in \mathcal {O}(z)\cap G^{\prime}_{l}$ with $d(x,f^nx)<\beta,$ we obtain $ d(\mathcal{A}_x^n,Id)\leq c_2\delta^\alpha.$
   Since~$\mathcal {O}(z)\cap G^{\prime}_{l}$ is dense in~$G^{\prime}_{l}$, we may choose a segment~$\mathcal{O}_L=\{f^kz\}_{k\in [-L,L]}$ such that~$\mathcal{O}_L\cap G^{\prime}_{l}$ forms a $\beta$-net of $G^{\prime}_{l}$. Then for any~$f^mz\in \mathcal{O}(z)\cap G^{\prime}_{l}$, there exists $f^kz\in\mathcal{O}_L\cap G^{\prime}_{l}$ with $k\in[-L,L]$ such that $d(f^kz,f^mz)<\beta$. Then we have~$d(\mathcal{A}_{f^kz}^{m-k},Id)\leq c_2\delta^\alpha,$ which implies $\|\mathcal{A}_{f^kz}^{m-k}\|,\|(\mathcal{A}_{f^kz}^{m-k})^{-1}\|\leq 1+c_2\delta_2^\alpha.$  Let $\tilde{c}_3=\max_{i\in[-L,L]}\{\|\mathcal{A}_z^i\|,\|(\mathcal{A}_z^i)^{-1}\|\}$, then the equality~$\mathcal{A}_z^m=\mathcal{A}_{f^kz}^{m-k}\mathcal{A}_z^k$ gives $\|\mathcal{A}_z^m\|,\|(\mathcal{A}_z^m)^{-1}\|\leq \tilde{c}_3(1+c_2\delta_2^\alpha)$, that is, $\|C(x)\|,\|C(x)^{-1}\|$ are uniformly bounded on~$\mathcal {O}(z)\cap G^{\prime}_{l}$.  	Therefore,  $C$ can be extended continuously to~$G^{\prime}_{l}$.  Then by \eqref{3.6},
   \begin{equation}\label{3.7}
      \mathcal{A}_x^n=C(f^nx)C(x)^{-1},\quad \forall x,f^n(x)\in G^{\prime}_{l}.
   \end{equation}  	
   	
   	   Now  we extend $C$ to~$\bigcup\limits_{i=0}^{\infty}f^i(G^{\prime}_{l})$ as follows: If
   $y\in \left(\bigcup\limits_{i=0}^{n}f^i(G^{\prime}_{l})\right)\setminus \bigcup\limits_{i=0}^{n-1}f^i(G^{\prime}_{l})$, then define~$C(y):=\mathcal{A}_{f^{-n}(y)}^nC(f^{-n}y)$.   To show  the map C satisfies \eqref{1.2},  for any $y\in\bigcup\limits_{i=0}^{\infty}f^i(G^{\prime}_{l})$, we may assume
   $y\in \left(\bigcup\limits_{i=0}^{n}f^i(G^{\prime}_{l})\right)\setminus \bigcup\limits_{i=0}^{n-1}f^i(G^{\prime}_{l})$ for some $n\geq 0$, where $\bigcup\limits_{i=0}^{-1}f^i(G^{\prime}_{l})$ denotes the empty set.  Then we have $f^{-n}(y)\in G^{\prime}_{l}$,     and
    \begin{align*}
    f(y)\in \left(\bigcup\limits_{i=1}^{n+1}f^i(G^{\prime}_{l})\right)\setminus \bigcup\limits_{i=1}^{n}f^i(G^{\prime}_{l})
    &\subset \left(\bigcup\limits_{i=0}^{n+1}f^i(G^{\prime}_{l})\right)\setminus \bigcup\limits_{i=1}^{n}f^i(G^{\prime}_{l})\\
    &\subset \left(\left(\bigcup\limits_{i=0}^{n+1}f^i(G^{\prime}_{l})\right)\setminus \bigcup\limits_{i=0}^{n}f^i(G^{\prime}_{l})\right)\bigcup G^{\prime}_{l}.
    \end{align*}
    If $f(y)\in \left(\bigcup\limits_{i=0}^{n+1}f^i(G^{\prime}_{l})\right)\setminus \bigcup\limits_{i=0}^{n}f^i(G^{\prime}_{l})$, then by the definition, $$C(fy)=\mathcal{A}_{f^{-n}(y)}^{n+1}C(f^{-n}y)=\mathcal{A}_{f^{-n}(y)}^{n+1}(\mathcal{A}_{f^{-n}(y)}^n)^{-1}C(y)=A(y)C(y).$$
    If $f(y)\in G^{\prime}_{l},$ then by $f^{-n}(y)\in G^{\prime}_{l}$,  \eqref{3.7} and the definition of $C(y)$, we obtain
    \[C(fy)=\mathcal{A}_{f^{-n}(y)}^{n+1}C(f^{-n}y)=A(y)\mathcal{A}_{f^{-n}(y)}^{n}C(f^{-n}y)=A(y)C(y).  \]
     Since~$\mu\left(\bigcup\limits_{i=0}^{\infty}f^i(G^{\prime}_{l})\right)=1$, we conclude that the map $C$ is defined almost everywhere and satisfying
   $$A(x)=C(fx)C(x)^{-1},\quad \text{ for } \mu\text{-a.e. } x\in M.$$ 	
   
  At last, it's left to prove C is $\mu$-continuous. Note that  $GL(X)$ is not separable in general. We can not use the   Lusin's Theorem directly.  However,  we shall show that the image of C on a  set of $\mu$-full measure is contained in a separable complete  subspace of  $GL(X).$ Let
  \begin{equation}\label{3.8}
  K:=\overline{\bigcup\limits_{n\in \mathbb{Z}}\left\{\mathcal{A}_x^n:x\in M\right\}}\subset GL(X). 
  \end{equation}
  Since $x\mapsto \mathcal{A}_x^n$ is continuous, $\left\{\mathcal{A}_x^n:x\in M\right\}$ is compact for any $n\in\mathbb{Z}.$ Therefore $\bigcup\limits_{n\in \mathbb{Z}}\left\{\mathcal{A}_x^n:x\in M\right\}$ is a $\sigma$-compact subset and which implies K is separable. We claim that  the image of C on $\bigcup\limits_{i=0}^{\infty}f^i(G^{\prime}_{l})$ is contained in K. Indeed,
   for any $y\in\left(\bigcup\limits_{i=0}^{n}f^i(G^{\prime}_{l})\right)\setminus \bigcup\limits_{i=0}^{n-1}f^i(G^{\prime}_{l}),$ since $f^{-n}(y)\in G^{\prime}_{l}$, there exists a sequence $\{n_k\}_{k\geq 1}$ such that $f^{n_k}(z)\to f^{-n}(y)$ as $k\to \infty.$ By  the continuity of C on $G^{\prime}_{l},$  $C(f^{-n}y)=\lim\limits_{k\to \infty}C(f^{n_k}z)=\lim\limits_{k\to \infty}\mathcal{A}_z^{n_k}$, which implies 
  \[C(y)=\mathcal{A}_{f^{-n}(y)}^nC(f^{-n}y)=\lim\limits_{k\to\infty } \mathcal{A}_{f^{n_k}(z)}^n\mathcal{A}_z^{n_k}=\lim\limits_{k\to\infty } \mathcal{A}_z^{n_k+n}\in K.\]
  Therefore $C:\bigcup\limits_{i=0}^{\infty}f^i(G^{\prime}_{l})\to K.$ Since $\bigcup\limits_{i=0}^{\infty}f^i(G^{\prime}_{l})$ is of $\mu$-full measure and K is a separable complete  metric space, by Lusin's Theorem,  for any $k\geq 1,$ there exists a compact subset $F_k\subset\bigcup\limits_{i=0}^{\infty}f^i(G^{\prime}_{l})$ with $\mu(F_k)>1-\frac{1}{n}$ such that $C|_{F_k}$ is continuous. This shows the $\mu$-continuity of $C.$
   
%   In the general case, if $\mu$ is not ergodic,  we consider the ergodic components of  $\mu. $ Since  almost every  ergodic component $\nu$ of  $\mu$ is a hyperbolic measure, the proof above shows that there exists a Borel measurable map   $C_{\nu}:M\to K$ such that $A(x)=C_{\nu}(fx)C_{\nu}(x)^{-1}$ holds for  $\nu$-a.e. $x\in M$. Since K is separable and complete. By    there exists a Borel measurable map $C:M\to GL(X)$, such that
%   \[A(x)=C(fx)C(x)^{-1},  ~~\text{for~} \mu-a.e. ~x\in M. \] 

 %%%%%%%%%. Section 5. Appendix 不遍历      %%%%%%%%%%%%
 
 \subsection{The general case}
 %    In the general case, if $\mu$ is not ergodic,  we consider the ergodic components of  $\mu. $ Since  almost every  ergodic component $\nu$ of  $\mu$ is a hyperbolic measure, the proof above shows that there exists a Borel measurable map   $C_{\nu}:M\to K$ such that $A(x)=C_{\nu}(fx)C_{\nu}(x)^{-1}$ holds for  $\nu$-a.e. $x\in M$. Since K is separable and complete. By    there exists a Borel measurable map $C:M\to GL(X)$, such that
 %   %   \[A(x)=C(fx)C(x)^{-1},  ~~\text{for~} \mu-a.e. ~x\in M. \] 
 %   
 %   In this appendix, we state and reproduce the proof of a version of Fisher, Morris and Whyte's theorem \cite{Fisher04} used in this paper.
 
  If $\mu$ is not ergodic, since almost every ergodic component of $\mu$ is an ergodic hyperbolic measure,  by subsection 3.1,   we  reduce the proof of Theorem \ref{thm A} to proving the following  Theorem \ref{thm 3.4}.
  
% Recall that a subset of a topological space is called {\em $\sigma$-compact}, if it is the union of  countably many compact sets.
 \begin{theorem}\label{thm 3.4}
 	Let  $f:M\to M$ be a homeomorphism, $\mu$ be an  $f$-invariant Borel probability measure, and let $A:M\to GL(X)$ be continuous.  If for almost every ergodic component $\nu$ of  $\mu$, there exists a $\nu$-continuous map $C_{\nu}:M\to GL(X)$ such that $A(x)=C_{\nu}(fx)C_{\nu}(x)^{-1}$ holds for  $\nu$-a.e. $x\in M$. Then there exists a $\mu$-continuous map $C:M\to GL(X)$, such that
 	\[A(x)=C(fx)C(x)^{-1},  \quad\text{for~} \mu\mbox{-a.e. } x\in M. \] 
 \end{theorem}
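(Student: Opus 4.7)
The plan is to reduce to the ergodic case treated in Subsection 3.1 by exploiting the ergodic decomposition of $\mu$. Write $\mu=\int_T \nu_t\,d\tau(t)$ with $(T,\tau)$ a standard Borel probability space and each $\nu_t$ ergodic. By hypothesis, for $\tau$-almost every $t$ there exists a $\nu_t$-continuous map $C_t:M\to GL(X)$ satisfying the cohomology equation $\nu_t$-a.e. The goal is to glue the family $\{C_t\}$ into a single $\mu$-continuous solution $C$.

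The key algebraic observation is that within each ergodic component the solution is unique up to right multiplication by a constant: if $C_t$ and $C'_t$ both solve the equation $\nu_t$-a.e., then $C_t(x)^{-1}C'_t(x)$ is $f$-invariant $\nu_t$-a.e., and hence $\nu_t$-a.e. equal to a constant $B_t\in GL(X)$ by ergodicity, so $C'_t=C_t\cdot B_t^{-1}$. This gauge freedom will be used to normalize. Fixing a countable dense sequence $\{p_j\}_{j\geq 1}\subset M$, I will use a measurable-selection argument to choose, for $\tau$-a.e.\ $t$, an index $j(t)$ such that $p_{j(t)}$ lies in $\mathrm{supp}(\nu_t)$ at a continuity point of $C_t$, and then replace $C_t$ by $C_t\cdot C_t(p_{j(t)})^{-1}$, so that the normalized family satisfies $C_t(p_{j(t)})=\mathrm{Id}$ and $t\mapsto C_t(x)$ becomes jointly measurable in $(t,x)$.

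With the normalization fixed, define $C:M\to GL(X)$ by $C(x):=C_{t(x)}(x)$, where $t(x)$ indexes the ergodic component of $x$. The cohomology equation then holds on a $\mu$-full set by Fubini applied to the ergodic decomposition. To upgrade this to $\mu$-continuity, I will reuse the separability trick from Subsection 3.1: since $A$ is continuous, the image of each $C_t$ lies in the separable $\sigma$-compact set $K=\overline{\bigcup_{n\in\mathbb{Z}}\{\mathcal{A}_x^n:x\in M\}}\subset GL(X)$ defined in \eqref{3.8}, so $C$ maps a $\mu$-full set into the separable complete metric space $K$, and Lusin's theorem on $C:M\to K$ produces compact sets $F_k$ with $\mu(F_k)>1-1/k$ on which $C$ is continuous.

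The main obstacle is the measurable-selection step: the $\nu_t$ form a potentially uncountable family, and for each $t$ the function $C_t$ is defined only outside a $\nu_t$-null set, so one must work in the space of pairs $(t,x)$ with $x$ a $\nu_t$-typical continuity point of $C_t$ and verify that this set is measurable and projects onto a $\tau$-full subset of $T$, in order to apply a standard selection theorem. This is precisely the point where the generalization of the Fisher--Morris--Whyte result advertised in the introduction will be needed.
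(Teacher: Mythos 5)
Your overall skeleton (ergodic decomposition, normalize within each component, select measurably, then use separability of the set $K$ from \eqref{3.8} plus Lusin) is the right one, but the proposal has a genuine gap exactly at the step you defer: the joint measurability of $(t,x)\mapsto C_t(x)$. The hypothesis gives, for each ergodic component, the mere \emph{existence} of a $\nu_t$-continuous solution, with no measurability in $t$; producing a measurable family is the entire content of the theorem, and saying that ``a standard selection theorem'' applies to the set of pairs $(t,x)$ is not a proof. One must first make the target space manageable: $GL(X)$ is not separable, so before any selection argument one has to show that each $C_\nu$ can be modified (by the right-translation gauge) so that its values lie $\nu$-a.e.\ in the separable set $K$. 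Your assertion that this holds ``since $A$ is continuous'' is false as stated: if $C_\nu$ solves the equation so does $C_\nu B$ for every $B\in GL(X)$, and its image lies in $KB$, not $K$. Placing the values in $K$ requires normalizing $C_\nu$ to be $Id$ at a point $z$ chosen (via Lemma \ref{lemma 3.2}) so that its orbit is dense in the supports $D_n$ of the restrictions of $\nu$ to the continuity compacta and the equation holds along its orbit, whence $C_\nu(f^kz)=\mathcal{A}_z^k$ and continuity forces $C_\nu\in K$ a.e. Your normalization at a fixed countable dense sequence $\{p_j\}$ does not achieve this and is in fact not well defined: $C_t$ is only defined $\nu_t$-a.e.\ and continuous on compacta of full measure, and the set $\{p_j\}$ may entirely miss these sets, so $C_t(p_{j(t)})$ need not exist, nor would such a point give the density argument needed to land in $K$. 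The uniqueness-up-to-constant observation is correct but does not by itself yield measurability of the normalized family.

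For comparison, the paper resolves the selection step as follows: by Rohlin's theorem (Proposition \ref{prop 3.7}) one may assume $(M,\mu)=(\Omega\times S,\hat{\mu}\times m)$ with each fiber $\{\omega\}\times S$ invariant; one forms the separable complete metric space $F(S,K)$ of measurable maps, proves that $(\omega,\phi)\mapsto \phi(f_\omega)^{-1}A_\omega\phi$ is Borel (Lemmas \ref{lemma 3.8}, \ref{lemma 3.10}, \ref{lemma 3.12} and Corollary \ref{cor 3.11}), observes that $\mathcal{F}=\sigma^{-1}(\{Id\})$ is analytic and projects onto a full-measure set of $\Omega$ by hypothesis, applies the von Neumann selection theorem (Theorem \ref{thm 3.13}) to get a Borel map $\Phi:\Omega\to F(S,K)$ with graph in $\mathcal{F}$, and finally uses Lemma \ref{lemma 3.9} to convert $\Phi$ into a Borel map $C:\Omega\times S\to K$; $\mu$-continuity then follows from Lusin's theorem because $K$ is separable. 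None of this machinery is replaced by your normalization device, so as written the argument is incomplete precisely where the difficulty lies.
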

  This theorem generalizes   Theorem 3.4 in \cite{Fisher04} in the sense that $GL(X)$ is neither separable nor locally compact. In fact, they proved that if $G$ is a  second countable and  locally compact group, $A:M\to G$ and $C_{\nu}:M\to G$ are measurable for almost every ergodic component $\nu$ of  $\mu$ such that $A(x)=C_{\nu}(fx)C_{\nu}(x)^{-1}$ for  $\nu$-a.e. $x\in M$.   Then there exists a measurable map $C:M\to G$ such that $A(x)=C(fx)C(x)^{-1}$ for $\mu$-a.e. $x\in M.$  In our setting,  $GL(X)$ is not separable. We shall prove  that the image of $A$  and $C_\nu$ are in a separable subset K (but not a subgroup) of  $GL(X)$ almost everywhere.  And thanks to the metric of $GL(X)$,  the local compactness condition is not needed in our setting.  % 介绍 ，分析不同之处。
  
  \begin{proof}[Proof of Theorem \ref{thm 3.4}]
  	For any given ergodic component $\nu$ of  $\mu$, we shall prove first that 
  	$C_{\nu}:M\to K$, where K is given by \eqref{3.8}. 
  	
  	Since $C_\nu$ is $\nu$-continuous, there exists a sequence of compact subsets $F_1\subset F_2\subset\cdots$ of M with  $\nu(\cup_{n\geq 1}F_n)=1$ such that $C_\nu|_{F_n}$ is continuous for every $n\geq1.$   Let $D_n=\mbox{supp}(\nu_{F_n})$, and let $E_n=\{x\in D_n:\overline{ \mathcal{O}(x)\cap D_n}=D_n \}$, ~ where $\nu_{F_n}$ is defined by
  	$\nu_{F_n}(B):=\nu(B\cap F_n)/\nu(F_n)$. Then by Lemma \ref{lemma 3.2}, $\nu(E_n)=\nu(F_n)$ for every $n\geq 1.$ Since the sequence $\{F_n\}_{n\geq 1}$ is  increasing,  we have $\nu(\cap_{n\geq 1}E_n)=\nu(F_1)>0.$ Choose $z\in \cap_{n\geq 1}E_n$. Then 
  	$$\overline{ \mathcal{O}(z)\cap D_n}=D_n,\quad \forall n\geq 1.$$
  	Without loss of generality, we may assume $C_\nu(z)=Id.$ Indeed, if $C_\nu(z)\neq Id,$ we may replace the map $C_\nu(x)$ by the map $\widetilde{C}_\nu(x):=C_\nu(x)C_\nu(z)^{-1}.$  Then  $\widetilde{C}_\nu(z)=Id$ and   $A(x)=C_{\nu}(fx)C_\nu(z)^{-1}C_\nu(z)C_{\nu}(x)^{-1}=\widetilde{C}_\nu(fx)\widetilde{C}_\nu(x)^{-1}$ for $\nu$-a.e. $x\in M$.
  	
  	Now for any $f^k(z)\in D_n$, we have $C_\nu(f^kz)=\mathcal{A}_z^kC_\nu(z)=\mathcal{A}_z^k.$  Since $\mathcal{O}(z)$ is dense in $D_n$, and $C_\nu$ is continuous on $D_n$,  we have 
  	$$C_\nu(y)\in K,~~\forall y\in D_n,n\geq 1.$$
  	It follows from $\nu(\cup_{n\geq 1}D_n)=1$ that 	$C_\nu(y)\in K$ for $\nu$-a.e. $y\in M$,
  
 To continue the proof Theorem \ref{thm 3.4}, we shall make some  reductions.
We begin by introducing  an ergodic decomposition theorem, a theorem of Rohlin and some lemmas of measurability.  

For a given  measurable partition  $\mathcal{P}$ of $M$, denote $\Omega=M/\mathcal{P}$.   Let $\pi:M\to \Omega$ be the corresponding projection, and let	$\hat{\mu}=\pi_*\mu$.	
We state an ergodic decomposition theorem from the point of  Rohlin's view. 
\begin{theorem}[Theorem 5.1.3 \cite{VianaOliveira16}]\label{thm 3.5}
	Let $f,\mu$ be as above. Then there exists a measurable subset $M_0\subset M$ with $\mu(M_0)=1$, a partition $\mathcal{P}$ of $M_0$ into measurable  subsets and a family  $\{\mu_\omega:\omega\in\Omega=M_0/\mathcal{P}\}$ of probability measures on M such that
	\begin{itemize}
		\item   $\mu_\omega(\pi^{-1}(\omega))=1$ for $\hat{\mu}$-a.e. $\omega\in\Omega$, 
		\item   $\mu_\omega$ is $f$-invariant and ergodic for $\hat{\mu}$-a.e. $\omega\in\Omega$, 
		\item   $\omega\mapsto \mu_\omega(E)$ is measurable, for every measurable subset $E\subset M$, and
		\[\mu(E)=\int \mu_\omega(E) d \hat{\mu}(\omega).\]
	\end{itemize}
\end{theorem}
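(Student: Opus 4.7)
The strategy is the classical Rohlin disintegration: build the partition $\mathcal{P}$ from Birkhoff averages of a countable dense family of test functions and then disintegrate $\mu$ along it. Concretely, I would choose a countable dense subset $\{\varphi_k\}_{k\geq 1}\subset C(M)$; by Birkhoff's theorem there is a set $M_0\subset M$ of full $\mu$-measure on which the limits $\bar{\varphi}_k(x)=\lim_{n\to\infty}\tfrac{1}{n}\sum_{i=0}^{n-1}\varphi_k(f^i x)$ exist simultaneously for every $k$. On $M_0$ define $x\sim y$ iff $\bar{\varphi}_k(x)=\bar{\varphi}_k(y)$ for all $k$; the equivalence classes form the candidate partition $\mathcal{P}$. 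The map $x\mapsto(\bar{\varphi}_k(x))_{k\geq 1}\in\mathbb{R}^{\mathbb{N}}$ is measurable and its level sets generate $\mathcal{P}$, which is exactly what is required for $\mathcal{P}$ to be a measurable partition in Rohlin's sense.

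Next I would apply Rohlin's disintegration theorem to $(M_0,\mu,\mathcal{P})$, producing conditional probabilities $\mu_\omega$ on $M$ with $\mu_\omega(\pi^{-1}(\omega))=1$ for $\hat\mu$-a.e.\ $\omega$, with $\omega\mapsto\mu_\omega(E)$ measurable for every Borel $E$, and with $\mu(E)=\int\mu_\omega(E)\,d\hat\mu(\omega)$. The $f$-invariance of each $\mu_\omega$ follows from the observation that $\mathcal{P}$ itself is $f$-invariant (since $\bar{\varphi}_k\circ f=\bar{\varphi}_k$ forces $x\sim y\Rightarrow fx\sim fy$): hence $\omega\mapsto f_*\mu_\omega$ is again a disintegration of $f_*\mu=\mu$ along $\mathcal{P}$, so by uniqueness in Rohlin's theorem $f_*\mu_\omega=\mu_\omega$ for $\hat\mu$-a.e.\ $\omega$.

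Ergodicity of $\mu_\omega$ comes from the very definition of $\mathcal{P}$: on each atom $\pi^{-1}(\omega)$ every $\bar{\varphi}_k$ is constant, so by density every continuous function has $\mu_\omega$-a.e.\ constant time averages. Via the standard characterization of ergodicity (constancy of time averages of a dense family of continuous observables), this forces $\mu_\omega$ to be ergodic for $\hat\mu$-a.e.\ $\omega$.

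The main obstacle is the measure-theoretic plumbing rather than any dynamical ingredient: one must verify that $(M_0,\mu)$ is a Lebesgue space and that $\mathcal{P}$ is measurable in the countably separated sense Rohlin's theorem demands (ensured here by separability of $C(M)$ on the compact manifold $M$), and that the measurability of $\omega\mapsto\mu_\omega(E)$ propagates from open $E$ to all Borel sets via a monotone class argument. Since all of this is standard and exactly the content of \cite{VianaOliveira16}, in practice the cleanest route is simply to invoke that reference.
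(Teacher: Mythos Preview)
The paper does not prove this statement at all; it is quoted verbatim as Theorem 5.1.3 of \cite{VianaOliveira16} and used as a black box in the proof of Theorem \ref{thm 3.4}. Your sketch is the standard Birkhoff-averages-plus-Rohlin-disintegration argument, which is correct and is essentially the proof given in \cite{VianaOliveira16}, so there is nothing to compare --- you have simply supplied what the paper deliberately outsources to the reference.
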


 \begin{definition}	
	A Borel probability space $(S,m)$ is called \emph{standard}, if S is Borel isomorphic to a complete separated metric space, and m is a Borel probability measure on S.
	% 		\item A subset $E$ of a standard Borel probability space S is called \emph{analytic}, if there exists a Borel space X and a Borel map $\psi:X\to S$ such that $E=\psi(X)$.
\end{definition}
We simplify the problem by using  the following Rohlin's theorem \cite{Rohlin}. This  statement comes from \cite[Proposition 2.21]{Fisher04}. 
\begin{proposition}[Rohlin \cite{Rohlin}]\label{prop 3.7}
	Assume the notation of Theorem \ref{thm 3.5}. Then there is a partition of $\Omega$ into countably many Borel subsets $\Omega_1,\Omega_2,\cdots$, such that for any $k\geq 1$, there exists a standard Borel probability space $(S_k,m_k)$ and a isomorphism $\theta_k:(\pi^{-1}(\Omega_k),\mu)\to (\Omega_k\times S_k,\hat{\mu}\times m_k).$ 
\end{proposition}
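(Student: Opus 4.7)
The plan is to reduce the proposition to Rohlin's structure theorem for standard probability spaces, using the countable partition of $\Omega$ to cope with the fact that fibers of distinct measure-isomorphism type cannot simultaneously be coordinatized by a single model space $(S_k,m_k)$.

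First, I would set up the standard-Borel framework. Since $M$ is a compact manifold, $(M,\mu)$ is (after discarding a $\mu$-null set) a standard probability space. The measurable partition $\mathcal{P}$ furnished by Theorem~\ref{thm 3.5}, together with the family $\{\mu_\omega\}$, realizes $\hat\mu$ as the quotient measure on $\Omega$; and by the general theory of measurable partitions on Lebesgue spaces one may equip $(\Omega,\hat\mu)$ with a $\sigma$-algebra making it standard as well. In this language, $\{\mu_\omega\}$ is a disintegration of $\mu$ along $\pi$, with $\omega\mapsto\mu_\omega(E)$ Borel for every Borel $E\subset M$.

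Second, I would classify the fibers $(\pi^{-1}(\omega),\mu_\omega)$ by their measure-isomorphism type. By Rohlin's classification each such fiber is determined, up to isomorphism of probability spaces, by its nonincreasing sequence of atom weights $(a_1(\omega)\ge a_2(\omega)\ge\cdots)$ together with the continuous mass $1-\sum_i a_i(\omega)$. The map $\omega\mapsto (a_1(\omega),a_2(\omega),\ldots)$ is Borel, as the weights are extracted from the disintegration via Borel operations. Partition the space of types into countably many Borel cells on which the type is constant up to isomorphism (for instance, cells recording whether there are exactly $k$ atoms and whether the continuous part vanishes, with weights localized in prescribed rational windows, refined so that the isomorphism class stabilizes). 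Pulling these cells back yields a countable Borel partition $\Omega=\bigsqcup_k \Omega_k$ on which all fibers are isomorphic to a fixed standard model $(S_k,m_k)$.

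Third, the genuinely delicate step is to trivialize the bundle $\pi^{-1}(\Omega_k)\to\Omega_k$ by a \emph{jointly} measurable fiberwise isomorphism $\theta_k:\pi^{-1}(\Omega_k)\to \Omega_k\times S_k$; this is the main obstacle. I would fix a countable separating sequence of Borel sets in the model $S_k$ and, fiber by fiber, construct a matching countable Borel partition of $\pi^{-1}(\omega)$ with the correct conditional $\mu_\omega$-masses, invoking a von Neumann style measurable selection to make the construction vary Borel-measurably in $\omega\in\Omega_k$. The resulting $\theta_k$ is Borel, fiberwise measure-preserving, and projects to the identity on $\Omega_k$, so it transports $\mu\lvert_{\pi^{-1}(\Omega_k)}$ to $\hat\mu\lvert_{\Omega_k}\times m_k$ as required. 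Since this is exactly the content of Rohlin's theorem on Lebesgue spaces, one may alternatively quote \cite{Rohlin} for the conclusion, which is in fact how the proposition is cited in the paper.
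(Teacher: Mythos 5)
Your overall route is the same as the paper's: classify the fibers $(\pi^{-1}(\omega),\mu_\omega)$ by measure-isomorphism type, split $\Omega$ into countably many Borel pieces on which the type is constant, and then trivialize each piece by a fiber-respecting measurable isomorphism; the paper obtains exactly these two steps by quoting Theorems (I) and (II) of \S 4 of \cite{Rohlin}, and your third step (separating sequence in $S_k$ plus a measurable-selection argument for joint measurability) is an acceptable outline of what that second citation provides, so falling back on \cite{Rohlin} there is fine.

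The genuine gap is in your second step. The isomorphism class of a standard probability space is the \emph{exact} nonincreasing sequence of atom weights, so a countable Borel partition of $\Omega$ with constant class on each cell exists only if the weight functions $a_i(\omega)$ take (mod $\hat{\mu}$) countably many values. Your proposed remedy --- rational windows for the weights, ``refined so that the isomorphism class stabilizes'' --- cannot deliver this: any countable refinement by windows still leaves the weights non-constant on cells of positive measure whenever some $a_i$ has continuous distribution under $\hat{\mu}$, and full stabilization would require an uncountable partition; for a general disintegration the statement with fiber-respecting $\theta_k$ is simply false in that situation. What rescues the proposition, and what you never invoke, is the ergodicity of the conditional measures supplied by Theorem \ref{thm 3.5}: an ergodic $f$-invariant probability measure either is non-atomic or is the uniform measure on a single periodic orbit, so the only fiber types that occur are ``continuous'' and ``$n$ atoms of weight $1/n$'' for $n\geq 1$ --- countably many --- and the cells $\{\omega:\mu_\omega \text{ non-atomic}\}$ and $\{\omega:\mu_\omega \text{ has exactly } n \text{ atoms}\}$ (Borel, since $\omega\mapsto\mu_\omega(E)$ is Borel) already have constant type, with no refinement needed. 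With that input your partition step becomes correct and the rest of your sketch goes through; without it, the classification step as written fails.
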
	
\begin{proof}[Sketch of  the proof]   
	Two standard Borel probability spaces $(S_1,\mu_1)$ and $(S_2\mu_2)$ are called {\em of the same type} if   $S_1$ is isomorphic Mod 0 to $S_2$. Since any standard Borel probability space is isomorphic Mod 0 to  the space consisting of an interval with ordinary Lebesgue measure and a sequence of points with positive measure\cite[ No. 4, \S 2]{Rohlin}, there are only countably many equivalent classes. Then by \cite[ Theorem (\uppercase\expandafter{\romannumeral1}) in \S 4]{Rohlin}, there is a measurable decomposition $\Omega=\Omega_1\cup\Omega_2\cup\cdots$ such that for any $\omega_1,\omega_2\in \Omega_k$, $(\pi^{-1}(\omega_1),\mu_{\omega_1} )$ and $(\pi^{-1}(\omega_2),\mu_{\omega_2} )$ are of the same type.  We conclude by using \cite[ Theorem (\uppercase\expandafter{\romannumeral2}) in \S 4]{Rohlin} that
    $(\pi^{-1}(\Omega_k),\mu)$ is isomorphic  Mod 0 to  $(\Omega_k\times \pi^{-1}(\omega) ,\hat{\mu}\times \mu_\omega)$ for any $\omega\in \Omega_k.$ 
	
\end{proof}

 Let  $(S,m)$ be a standard Borel probability space and $(Y,d)$ be a  separable metric space. Denote by $F(S,Y)$ the space of measurable maps $\phi:S\to Y$, where two maps are identified if they are equal almost everywhere. Then  $F(S,Y)$ is a separable metric space endowed with the metric
 \[d_F(\phi_1,\phi_2):=\inf\left\{\varepsilon>0:m(\{s\in S:d(\phi_1(s),\phi_2(s))>\varepsilon \})\leq \varepsilon\right\}. \]
 Moreover, if $Y$ is complete, then $F(S,Y)$ is also complete.
 
  The following three lemmas comes from \cite{Margulis1991Discrete} and \cite{Fisher04}, we shall give  a proof here for the completeness.
 \begin{lemma}[\cite{Margulis1991Discrete}]\label{lemma 3.8}
 	Let $\Omega,S$ be standard Borel spaces, $Y$ be a separable metric space, and $g:\Omega\times S\to Y$ be a  Borel map. Then
 	\begin{enumerate}
 		\item For any $\omega\in \Omega$, the map $g_\omega:S\to Y$ defined by $g_\omega(s)=g(\omega,s)$ is Borel.
 		\item  The induced map $\check{g}:\Omega\to F(S,Y)$ defined by $\check{g}(\omega)=g_\omega$ is Borel.
 	\end{enumerate}
 \end{lemma}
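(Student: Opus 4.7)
The plan is to treat the two parts separately, with part (i) being a quick composition argument and part (ii) being where the real work lies.

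For part (i), for each fixed $\omega \in \Omega$ the inclusion $\iota_\omega \colon S \to \Omega \times S$ given by $s \mapsto (\omega,s)$ is Borel (in fact a continuous embedding of $S$ onto a measurable slice). Hence $g_\omega = g \circ \iota_\omega$ is Borel as a composition of Borel maps.

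For part (ii), since $F(S,Y)$ is a separable metric space, it is enough to show that $\check{g}^{-1}(B(\phi_0,r))$ is Borel for every open ball $B(\phi_0,r) \subset F(S,Y)$. Fix $\phi_0 \in F(S,Y)$ and $r > 0$, and define the Borel function $h \colon \Omega \times S \to [0,\infty)$ by $h(\omega,s) = d\bigl(g(\omega,s),\phi_0(s)\bigr)$; this is Borel because $g$ and $\phi_0$ are Borel and $d$ is continuous. For each $\varepsilon > 0$ the set $E_\varepsilon = \{(\omega,s) : h(\omega,s) > \varepsilon\}$ is Borel in $\Omega \times S$, and I will invoke the standard Fubini/Tonelli fact for products of standard Borel probability spaces that
\[F_\varepsilon(\omega) := m\bigl(\{s \in S : h(\omega,s) > \varepsilon\}\bigr) = m\bigl((E_\varepsilon)_\omega\bigr)\]
is a Borel function of $\omega$.

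I will then argue
\[\check{g}^{-1}\bigl(B(\phi_0,r)\bigr) = \bigcup_{\varepsilon \in \mathbb{Q} \cap (0,r)} \bigl\{\omega \in \Omega : F_\varepsilon(\omega) \leq \varepsilon\bigr\},\]
which exhibits the preimage as a countable union of Borel sets. The $\supseteq$ inclusion is immediate from the definition of $d_F$. For $\subseteq$, if $d_F(g_\omega,\phi_0) < r$, there exists $\varepsilon_0 \in (0,r)$ with $F_{\varepsilon_0}(\omega) \leq \varepsilon_0$; choosing any rational $\varepsilon \in (\varepsilon_0,r)$ gives $F_\varepsilon(\omega) \leq F_{\varepsilon_0}(\omega) \leq \varepsilon_0 < \varepsilon$, placing $\omega$ on the right-hand side.

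The only nontrivial ingredient is the measurability of $\omega \mapsto m((E_\varepsilon)_\omega)$, and this is where I expect the main (but still routine) obstacle to be: it follows either from Tonelli applied to $\mathbf{1}_{E_\varepsilon}$ against the product of $m$ with any probability measure on $\Omega$, or more directly from a monotone-class argument on the Borel $\sigma$-algebra of $\Omega \times S$, verifying the property first on measurable rectangles and extending to all Borel sets. Once this is in hand, the rest of the proof is bookkeeping.
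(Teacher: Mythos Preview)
Your proposal is correct and essentially identical to the paper's own proof: both establish (i) via sections/composition with the slice inclusion, and for (ii) both reduce to preimages of open balls in $F(S,Y)$, rewrite the ball condition as a countable union over rationals $\varepsilon\in(0,r)$ of the sets $\{\omega:m(\{s:d(g(\omega,s),\phi_0(s))>\varepsilon\})\le\varepsilon\}$, and invoke Fubini to conclude that $\omega\mapsto m((E_\varepsilon)_\omega)$ is measurable. Your write-up even spells out the two inclusions for the union-over-rationals identity a bit more carefully than the paper does.
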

 \begin{proof}
 	(\romannumeral1 )  For any $\omega\in \Omega$ and any open subset $U\subset Y$, since g is Borel and 
 	  $$(g_\omega)^{-1}(U)=\left\{ s\in S:(\omega,s) \in g^{-1}(U) \right\}$$
 	is the $\omega$-section of $g^{-1}(U)$,  it follows that $g_\omega$ is Borel.
 	
 	(\romannumeral2 ) It's enough to prove for any $h\in F(S,Y)$ and any $\delta>0$,  the set $\{\omega\in \Omega:d_F({g}_\omega,h)<\delta  \}$ is measurable. Notice that 
 	\[\left\{\omega:d_F({g}_\omega,h)<\delta \right \}=\bigcup_{r_n\in \mathbb{Q}
 		\atop0<r_n<\delta}\left\{\omega:m(\{s\in S:d({g}_\omega(s),h(s))>r_n \})\leq r_n   \right\}. \]
 	Since  $\{s\in S:d({g}_\omega(s),h(s))>r_n \}$ is the $\omega$-section of the measurable set $\{(\omega,s)\in \Omega\times S:d({g}(\omega,s),h(s))>r_n \}$, by Fubini's theorem,  the measure of the $\omega$-section of the measurable set $\{(\omega,s)\in \Omega\times S:d({g}(\omega,s),h(s))>r_n \}$ is a  measurable function of $\omega$, that is, $\omega\mapsto m(\{s\in S:d({g}_\omega(s),h(s))>r_n \})$ is measurable, which implies the set 
 	  \[\left\{\omega:m(\{s\in S:d({g}_\omega(s),h(s))>r_n \})\leq r_n   \right\}\]
 	  is measurable. Thus we conclude the set $\left\{\omega:d_F({g}_\omega,h)<\delta \right \}$ is measurable. 	  
 \end{proof}

 The converse of this lemma is also true.
 \begin{lemma}[\cite{Fisher04}]\label{lemma 3.9}
 	Let $(\Omega,\hat{\mu}),(S,m)$ be standard Borel probability spaces, $Y$ be a separable complete metric space,  and $\Phi:\Omega\to F(S,Y)$ be a  Borel map. Then there exists a  Borel map $\hat{\Phi}:\Omega\times S\to Y$, such that for $\hat{\mu}$-a.e. $\omega\in \Omega$, 
 	\[\hat{\Phi}(\omega, s)=\Phi(\omega)(s),  \quad \text{for m-a.e.}~ s\in S. \]
 \end{lemma}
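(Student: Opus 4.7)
The plan is to realize $\Phi$ as a pointwise almost-everywhere limit of countably-valued Borel ``simple maps'' $\Phi_n:\Omega\to F(S,Y)$ whose associated two-variable functions are manifestly jointly Borel, and then to combine Fubini with the completeness of $Y$ to extract an honest jointly Borel limit $\hat\Phi$ that represents $\Phi$.

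First I will exploit the separability of $F(S,Y)$. Fix a countable dense sequence $(\phi_j)_{j\geq 1}$ in $F(S,Y)$, together with a choice of Borel representative $\phi_j:S\to Y$ of each equivalence class (possible since $Y$ is Polish). Define $\Phi_n(\omega):=\phi_{j_n(\omega)}$, where $j_n(\omega)$ is the smallest index $j$ with $d_F(\phi_j,\Phi(\omega))<2^{-n}$. Since $\Phi$ is Borel and each open $d_F$-ball in $F(S,Y)$ is Borel, the integer-valued map $j_n$ is Borel on $\Omega$, and the two-variable map $\hat\Phi_n(\omega,s):=\phi_{j_n(\omega)}(s)$ is jointly Borel, being a countable union of Borel rectangles on which a Borel function of $s$ is applied.

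Next, the triangle inequality gives $d_F(\Phi_n(\omega),\Phi_{n+1}(\omega))<2^{-n+1}$ for every $\omega$, so by the very definition of $d_F$,
\[
m\bigl(\{s\in S:d(\hat\Phi_n(\omega,s),\hat\Phi_{n+1}(\omega,s))>2^{-n+1}\}\bigr)\leq 2^{-n+1}.
\]
Integrating against $\hat\mu$ and using Fubini, the sets $B_n:=\{(\omega,s):d(\hat\Phi_n,\hat\Phi_{n+1})>2^{-n+1}\}$ satisfy $\sum_n(\hat\mu\times m)(B_n)<\infty$. Borel--Cantelli then ensures that for $(\hat\mu\times m)$-a.e. $(\omega,s)$ the sequence $\bigl(\hat\Phi_n(\omega,s)\bigr)_n$ is eventually Cauchy in $Y$ and hence converges (this is where completeness of $Y$ is essential). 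Setting $\hat\Phi(\omega,s)$ equal to this limit where it exists and to any fixed $y_0\in Y$ elsewhere produces a jointly Borel map $\hat\Phi:\Omega\times S\to Y$.

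Finally, I verify that $\hat\Phi(\omega,\cdot)=\Phi(\omega)$ in $F(S,Y)$ for $\hat\mu$-a.e. $\omega$: by Fubini, for $\hat\mu$-a.e. $\omega$ the sequence $\hat\Phi_n(\omega,s)$ converges to $\hat\Phi(\omega,s)$ for $m$-a.e. $s$, while on the other hand $d_F(\Phi_n(\omega),\Phi(\omega))\to 0$ forces $\hat\Phi_n(\omega,\cdot)\to\Phi(\omega)$ in $m$-measure; the two limits must therefore agree $m$-almost everywhere. The one delicate step is the Borel approximation/selection at the start: it requires separability of $F(S,Y)$ (which relies on $(S,m)$ being standard) together with a measurable choice of Borel representatives for the countably many equivalence classes $\phi_j$. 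Once that ingredient is in place, the remainder is just Fubini, Borel--Cantelli, and completeness of $Y$.
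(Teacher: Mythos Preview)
Your proof is correct and follows essentially the same strategy as the paper's: approximate $\Phi$ by countably-valued Borel maps $\Phi_n$ whose associated two-variable functions are manifestly jointly Borel, show the approximants form a Cauchy scheme, and pass to the limit using completeness of $Y$ together with Fubini. The only cosmetic differences are that the paper builds the $\Phi_n$ from a refining sequence of Borel partitions of $F(S,Y)$ of mesh $2^{-n}$ (rather than your nearest-point selection from a countable dense set), and that it phrases the limiting step as Cauchyness in $F(\Omega\times S,Y)$ followed by extraction of an a.e.-convergent subsequence, whereas you invoke Borel--Cantelli on the successive differences directly; these are equivalent routes to the same conclusion.
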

 \begin{proof}
 	Let $\left\{\mathcal{P}_n=\{P_{n,i}\}_{i\geq 1}\right\}_{n\geq 1}$ be a sequence of increasing partitions of $ F(S,Y)$    with the diameter of $P_{n,i}$ less that $2^{-n}$. Fix any $\phi_n^i\in P_{n,i}$. Define $\Phi_n:\Omega\times S\to Y$ by 
 	\[\Phi_n(\omega,s)=\phi_n^i(s), \text{~if~} \Phi(\omega)\in P_{n,i}. \]
 	Then for any $n,k\in \mathbb{N}, \omega\in \Omega$, one has
 	\begin{equation*}
 	m\{s:d(\Phi_n(\omega,s),\Phi_{n+k}(\omega,s))>2^{-n}   \}<2^{-n}.
 	\end{equation*}
 	Thus the Fubini theorem gives $(\hat{\mu}\times m)(\{(\omega,s):d(\Phi_n(\omega,s),\Phi_{n+k}(\omega,s))>2^{-n}\})\leq 2^{-n}$, which implies
 	$d_F(\Phi_n,\Phi_{n+k})\leq 2^{-n}.$ Thus $\{\Phi_n\}$ converges in $ F(\Omega\times S, Y)$. Denote by $\hat{\Phi}$ the limit of $\{\Phi_n\}$. Since $\{\Phi_n\}$ converges to $\hat{\Phi}$ in measure, there exists a subsequence $\{\Phi_{n_k}\}$ converges to $\hat{\Phi}$ almost everywhere. Let $\Phi_{n_k,\omega}:S\to X$ and $\hat{\Phi}_\omega:S\to Y$ be gotten by Lemma \ref{lemma 3.8}. Then for $\hat{\mu}$-a.e. $\omega\in \Omega$, $\Phi_{n_k,\omega}$ converges to $\hat{\Phi}_\omega $ almost everywhere, which implies $d_F(\Phi_{n_k,\omega},\hat{\Phi}_\omega)\to 0$.  Notice that for any $\omega,$  $d_F(\Phi_{n_k,\omega},\Phi(\omega))\leq 2^{-n_k}\to 0$ as $k\to\infty$. Therefore  $\hat{\Phi}_\omega=\Phi(\omega) $ for $\hat{\mu}$-a.e. $\omega\in \Omega$. This proves the Borel map $\hat{\Phi}\in F(\Omega\times S, Y)$ satisfies 
 	\[\hat{\Phi}(\omega,s)=\hat{\Phi}_\omega(s)=\Phi(\omega)(s), ~~\text{for }~\hat{\mu}\text{-a.e.~} \omega\in \Omega \text{~and m-a.e.~} s\in S.\]
 \end{proof}

 Let $(S,m)$ be a standard Borel probability space, and denote by $Aut_{m}(S)$ the group of measure preserving automorphisms on S, where two automorphisms are identified if they are equal almost everywhere. Since $S$ is a standard Borel space, we may assume $S$ is a separable  and complete metric space, and hence $Aut_{m}(S)$ is a closed subset of $F(S,S)$.
 \begin{lemma}[\cite{Fisher04}]\label{lemma 3.10}
 	Let $(S,m)$ be a standard Borel probability space, $Y$ be a  separable metric space. Then the natural action  $Aut_{m}(S)\times F(S,Y)\to F(S,Y)$ defined by $(g,\phi)\mapsto \phi\circ g$ is continuous.
 \end{lemma}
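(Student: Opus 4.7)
The plan is to show sequential continuity: if $g_n \to g$ in $Aut_m(S)$ and $\phi_n \to \phi$ in $F(S,Y)$, then $\phi_n \circ g_n \to \phi \circ g$ in $F(S,Y)$. By the triangle inequality,
\[
d_F(\phi_n \circ g_n, \phi \circ g) \leq d_F(\phi_n \circ g_n, \phi \circ g_n) + d_F(\phi \circ g_n, \phi \circ g),
\]
so it suffices to control the two pieces separately.

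For the first piece, I would exploit that each $g_n$ preserves $m$. Explicitly, for any $\varepsilon > 0$,
\[
\{s \in S : d_Y(\phi_n(g_n s), \phi(g_n s)) > \varepsilon\} = g_n^{-1}\{t \in S : d_Y(\phi_n(t), \phi(t)) > \varepsilon\},
\]
and measure-preservation gives equal $m$-measures on both sides. Taking the infimum over admissible $\varepsilon$ therefore yields $d_F(\phi_n \circ g_n, \phi \circ g_n) = d_F(\phi_n, \phi) \to 0$. This step is essentially bookkeeping.

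The main obstacle is the second piece, which requires showing that right composition by a fixed measurable (not continuous!) map $\phi$ is continuous at $g$. The idea is to regularize $\phi$ via Lusin's theorem: given any target $\varepsilon_0 > 0$, pick $\delta$ with $2\delta < \varepsilon_0$, and choose a compact subset $K \subset S$ with $m(K) > 1 - \delta$ such that $\phi|_K$ is continuous, hence uniformly continuous. Pick $\eta > 0$ so that $s,t \in K$ with $d_S(s,t) < \eta$ implies $d_Y(\phi(s), \phi(t)) < \varepsilon_0$. Then the bad set
\[
A_n := \{s \in S : d_Y(\phi(g_n s), \phi(g s)) > \varepsilon_0\}
\]
is contained in $\{s : g_n s \notin K\} \cup \{s : g s \notin K\} \cup \{s : d_S(g_n s, g s) \geq \eta\}$. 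Since $g_n, g$ preserve $m$, the first two sets each have measure $< \delta$; since $d_F(g_n, g) \to 0$ encodes convergence in measure, the third set has measure tending to $0$. Hence $m(A_n) < 2\delta + o(1) < \varepsilon_0$ for large $n$, proving $d_F(\phi \circ g_n, \phi \circ g) \leq \varepsilon_0$ eventually.

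Combining the two pieces gives $d_F(\phi_n \circ g_n, \phi \circ g) \to 0$, which establishes continuity of the action. The only delicate point is the use of Lusin's theorem to transfer convergence in measure of $g_n \to g$ into convergence in measure of $\phi \circ g_n \to \phi \circ g$; separability of $Y$ and $S$ and standardness of $(S, m)$ are used precisely here to guarantee that Lusin's theorem is available.
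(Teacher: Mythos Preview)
Your proof is correct and follows essentially the same approach as the paper's own proof: the same triangle-inequality splitting into $d_F(\phi_n\circ g_n,\phi\circ g_n)$ and $d_F(\phi\circ g_n,\phi\circ g)$, the first handled by measure-preservation of $g_n$, and the second by Lusin's theorem plus the three-set decomposition $\{g_n s\notin K\}\cup\{gs\notin K\}\cup\{d_S(g_n s,gs)\geq\eta\}$. The only differences are cosmetic choices of constants.
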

 \begin{proof}
 	Given any $g_n\to g, \phi_n\to \phi$, we'd prove $d_F(\phi_n\circ g_n,\phi\circ g)\to 0$.
 	
 	Note that $d_F(\phi_n\circ g_n,\phi\circ g)\leq d_F(\phi_n\circ g_n,\phi\circ g_n)+d_F(\phi\circ g_n,\phi\circ g)$. 
 	since 
 	\begin{align*}
 	m\{s:d\big(\phi_n\circ g_n(s),\phi\circ g_n(s)\big)>\varepsilon\} & = m(g_n^{-1}\{s:d(\phi_n,\phi)>\varepsilon\})\\
 	& = m\{s:d(\phi_n,\phi)>\varepsilon\}\to 0,
 	\end{align*}
 	we have $d_F(\phi_n\circ g_n,\phi\circ g_n)\to 0.$ Then it remains to show $d_F(\phi\circ g_n,\phi\circ g)\to 0$.
 	
 	For any $\varepsilon>0$, by  Lusin's theorem, there exists a compact subset $K_\varepsilon\subset S$ such that  $\phi$ is uniformly continuous on $K_\varepsilon$ and $m(K_\varepsilon)>1-\frac{1}{4}\varepsilon.$ Then there exists $\delta>0$ such that for any
 	$x,y\in K_\varepsilon$ with $d(x,y)\leq\delta$, one has $d(\phi(x),\phi(y))\leq \varepsilon$. Let $A_n=g_n^{-1}(S\setminus K_\varepsilon)\cup g^{-1}(S\setminus K_\varepsilon)$. Since $m$ is $g_n$ and $g$ invariant, one sees $m(A_n)\leq \frac{\varepsilon}{2}.$ Then equation 
 	\[\left\{s:d\big(\phi\circ g_n(s),\phi\circ g(s)\big)>\varepsilon \right\}\subset A_n\cup\{s:d\big(g_n(s),g(s)\big)>\delta\}  \]
 	gives 
 	\begin{align*}
 	\limsup_{n\to\infty} m\{s:d\big(\phi\circ g_n(s),\phi\circ g(s)\big)>\varepsilon\}\leq \frac{\varepsilon}{2}+0<\varepsilon.
 	\end{align*} Since $\varepsilon$ is arbitrary, we conclude that 
 	$d_F(\phi\circ g_n,\phi\circ g)\to 0$. 
 \end{proof}

{\bf Reductions}. By Theorem \ref{thm 3.5} and Proposition  \ref{prop 3.7},  $M$ is isomorphic Mod 0 to the disjoint union of at most countably many  spaces $\Omega_k\times S_k$. Hence we may assume without loss of generality that $M=\Omega\times S$ and $\mu=\hat{\mu}\times m$, where $(\Omega,\hat{\mu}),(S,m)$ are standard Borel probability spaces. By the ergodic decomposition theorem,  every $\{\omega\}\times S$  is $f$-invariant.  Denote $f(\omega,s)=(\omega,f_2(\omega,s))$. Then by  Theorem \ref{thm 3.5} and  Lemma \ref{lemma 3.8}, for $\hat{\mu}$-a.e. $\omega\in \Omega$, the map $f_\omega:=f_{2,\omega}:S\to S $ defined by $f_\omega(s)=f_2(\omega,s)$ is Borel and preserves the ergodic measure $m.$ Therefore, the condition of Theorem \ref{thm 3.4} becomes: for $\hat{\mu}$-a.e. $\omega\in \Omega,$ there exists a measurable map $C_\omega:S\to K,$ such that
\begin{equation}\label{3.9}
A(\omega,s)=C_\omega(f_\omega(s))C_\omega(s)^{-1}, \text{~for~} m\text{-}a.e. ~s\in S.
\end{equation}  	 

%The left part of the proof of  Theorem \ref{thm 3.4} relies on some preliminaries on measurability (from Lemma \ref{lemma 3.8} to Theorem \ref{thm 3.13}).  We will finish the proof of Theorem \ref{thm 3.4}   at the end of    this section.
%
%
% 
% Let $\Omega,S$ and  $f:\Omega\times S\to \Omega\times S$ defined by $f(\omega,s)=(\omega,f_\omega(s))$ be as in the Reductions of this subsection.  Then for any $\omega\in \Omega$,  $f_\omega\in  Aut_{m}(S)$. And by Lemma \ref{lemma 3.8},   the map $\Omega\to Aut_{m}(S)$ defined by $\omega\mapsto f_\omega$  is Borel. 
As a corollary of Lemma \ref{lemma 3.10}, we have the following result.
 
 \begin{corollary}\label{cor 3.11}
 	Let $\Omega,S,f,$ be as above, and $K$ be as in  \eqref{3.8}, Then the map defined by  $\Omega\times F(S,K)\to F(S,K)$, $(\omega,\phi)\mapsto \phi(f_\omega)$ is Borel. 
 \end{corollary}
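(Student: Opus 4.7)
The plan is to express the map $(\omega,\phi)\mapsto \phi\circ f_\omega$ as the composition of a Borel map followed by a continuous map, which is then automatically Borel.

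First, I would invoke Lemma \ref{lemma 3.8} applied to the Borel map $f_2:\Omega\times S\to S$ (which is Borel by the reductions preceding the corollary, since $f$ is a Borel automorphism of $\Omega\times S$ of the form $f(\omega,s)=(\omega,f_2(\omega,s))$). The conclusion is that the induced map
\[
\Psi:\Omega\longrightarrow F(S,S),\qquad \omega\longmapsto f_\omega,
\]
is Borel. Moreover, by Theorem \ref{thm 3.5}, $f_\omega$ preserves $m$ for $\hat\mu$-a.e.\ $\omega$, so after possibly removing a null set we may regard $\Psi$ as taking values in $Aut_m(S)$.

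Next, I would form the product map
\[
\Omega\times F(S,K)\longrightarrow Aut_m(S)\times F(S,K),\qquad (\omega,\phi)\longmapsto (\Psi(\omega),\phi)=(f_\omega,\phi),
\]
which is Borel because $\Psi$ is Borel and the identity on $F(S,K)$ is continuous. Note that $K$ is separable (as remarked in the paragraph following \eqref{3.8}), so $F(S,K)$ is a separable metric space and Lemma \ref{lemma 3.10} applies with $Y=K$, giving continuity of the composition action
\[
Aut_m(S)\times F(S,K)\longrightarrow F(S,K),\qquad (g,\phi)\longmapsto \phi\circ g.
\]

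Finally, the map in the corollary is the composition of these two maps; a Borel map followed by a continuous one is Borel, so $(\omega,\phi)\mapsto \phi\circ f_\omega$ is Borel, as desired. The only subtlety, and the main thing to be careful about, is making sure the reduction of $f$ to the product form $(\omega,s)\mapsto(\omega,f_2(\omega,s))$ really gives a Borel $f_2$ so that Lemma \ref{lemma 3.8} is applicable; beyond that the argument is a formal chaining of the two preceding lemmas.
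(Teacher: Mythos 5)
Your proposal is correct and follows essentially the same route as the paper: apply Lemma \ref{lemma 3.8} to see that $\omega\mapsto f_\omega$ is Borel, form the Borel map $(\omega,\phi)\mapsto(f_\omega,\phi)$ into $Aut_m(S)\times F(S,K)$, and compose with the continuous action from Lemma \ref{lemma 3.10}. The extra care you take in checking that $f_2$ is Borel and that $f_\omega\in Aut_m(S)$ almost everywhere is exactly what the paper's preceding ``Reductions'' paragraph supplies.
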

 \begin{proof}
 	By Lemma \ref{lemma 3.8}, the map $\omega\mapsto f_\omega$ is Borel.  Thus we obtain the map
 	\begin{align*}
 	\Omega\times F(S,K)& \to Aut_{m}(S)\times F(S,K)\\
 	(\omega,\phi) & \mapsto (f_\omega,\phi)
 	\end{align*}
 	is Borel. By Lemma \ref{lemma 3.10}, the natural action $Aut_{m}(S)\times F(S,K)\to F(S,K)$, $(g,\phi)\mapsto \phi\circ g$ is continuous .   We conclude $(\omega,\phi)\mapsto \phi(f_\omega)$ is Borel. 	
 \end{proof}
 Denote 
 \[K^\prime=  \overline{\bigcup\limits_{m,n\in \mathbb{Z}}\left\{\mathcal{A}_y^m\mathcal{A}_x^n:x,y\in M\right\}}\subset GL(X).  \]
 Then $K^\prime$ is a  separable, complete metric space. We have the following lemma.
 \begin{lemma}\label{lemma 3.12}
 	  The  map   $\tau_1:F(S,K)\to F(S,K)$ defined by 
 	  $$\tau_1(\phi)(s)=\phi(s)^{-1}$$
 	and the map  
 	  $\tau_2:F(S,K)\times F(S,K)\to F(S,K^\prime)$ defined by 
 	$$\tau_2(\phi,\psi)(s)=\phi(s)\circ\psi(s)$$
  are  continuous. 
 \end{lemma}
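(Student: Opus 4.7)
My plan is to prove the two continuity statements separately. For $\tau_1$, I would exploit the inversion-symmetry of the metric $d$ on $GL(X)$: for any $A, B \in GL(X)$,
\[ d(A^{-1}, B^{-1}) = \|A^{-1} - B^{-1}\| + \|A - B\| = d(A, B). \]
Consequently $d(\tau_1(\phi)(s), \tau_1(\psi)(s)) = d(\phi(s), \psi(s))$ pointwise, and unwinding the definition of $d_F$ gives $d_F(\tau_1(\phi), \tau_1(\psi)) = d_F(\phi, \psi)$. Thus $\tau_1$ is an isometry, and in particular continuous.

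For $\tau_2$, convergence in $F(S,\cdot)$ is convergence in measure, so I must show that if $\phi_n \to \phi$ and $\psi_n \to \psi$ in $F(S,K)$, then $\phi_n \psi_n \to \phi \psi$ in $F(S, K')$ in measure. My starting point would be the pointwise algebraic bound
\[ d(AB, CD) \leq \|A\|\|B-D\| + \|A-C\|\|D\| + \|B^{-1}\|\|A^{-1}-C^{-1}\| + \|B^{-1}-D^{-1}\|\|C^{-1}\|, \]
obtained by inserting $AD$ and $B^{-1}C^{-1}$ in the two summands that define $d(AB, CD)$. Applied to $A = \phi_n(s), B = \psi_n(s), C = \phi(s), D = \psi(s)$, this controls $d(\phi_n\psi_n(s), \phi\psi(s))$ by $d(\phi_n(s), \phi(s))$, $d(\psi_n(s), \psi(s))$, and the four operator norms $\|\phi_n(s)\|, \|\psi(s)\|, \|\psi_n(s)^{-1}\|, \|\phi(s)^{-1}\|$.

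The main obstacle is that $K$ is not bounded, so these norms can blow up; I would bypass this by truncation. Since $\phi$ and $\psi$ take values in the separable metric space $K$ and $s \mapsto \|\phi(s)\|$ etc.\ are measurable and finite, for any $\eta > 0$ I can choose $M > 0$ such that the exceptional set
\[ E_M = \{s \in S : \max(\|\phi(s)\|, \|\phi(s)^{-1}\|, \|\psi(s)\|, \|\psi(s)^{-1}\|) > M\} \]
has $m(E_M) < \eta$. On $S \setminus E_M$, once $d(\phi_n(s), \phi(s)) < \varepsilon < 1$ and $d(\psi_n(s), \psi(s)) < \varepsilon$, the definition of $d$ directly gives $\|\phi_n(s)\|, \|\psi_n(s)^{-1}\| \leq M + 1$, and the displayed estimate yields $d(\phi_n\psi_n(s), \phi\psi(s)) \leq (4M+2)\varepsilon$.

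To conclude, given $\rho > 0$ I would take $\eta = \rho/2$, the corresponding $M$, and $\varepsilon = \min(1, \rho/(8M+4))$, and use convergence in measure of $\phi_n$ and $\psi_n$ to arrange $m\{d(\phi_n, \phi) > \varepsilon\}, m\{d(\psi_n, \psi) > \varepsilon\} < \rho/4$ for all large $n$. Combining with $E_M$, the set where $d(\phi_n\psi_n(s), \phi\psi(s)) > \rho$ has measure less than $\rho$ for such $n$, which is exactly $d_F(\phi_n\psi_n, \phi\psi) \leq \rho$. The truncation step is the only real work; the remainder is routine Ky Fan-metric bookkeeping.
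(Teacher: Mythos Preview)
Your proof is correct and follows essentially the same approach as the paper. For $\tau_1$ both arguments exploit the inversion-symmetry $d(A^{-1},B^{-1})=d(A,B)$; for $\tau_2$ both reduce to convergence in measure plus a truncation on the (unbounded) operator norms of $\phi,\psi$ and their inverses, the only difference being that you package the four terms into a single pointwise inequality for $d(AB,CD)$ and truncate once, whereas the paper splits $d$ into its $\|\cdot\|$ and $\|(\cdot)^{-1}\|$ halves and handles each of the resulting four cross-terms separately.
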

 
 \begin{proof}
 	Suppose that $\phi_n\to \phi\in F(S,K)$, that is,
 	\[\inf\left\{\varepsilon>0:m(\{s\in S:d(\phi_n(s),\phi(s))>\varepsilon \})\leq \varepsilon\right\}\to 0, \mbox{as~} n\to \infty. \]
 	Since 
 	\[ d(\phi_n(s),\phi(s))=\|\phi_n(s)-\phi(s)\|+\|\phi_n(s)^{-1}-\phi(s)^{-1}\|=d(\phi_n(s)^{-1},\phi(s)^{-1}),\]
 	 we have $\phi_n^{-1}\to \phi^{-1}\in F(S,K)$ as $n\to\infty$. That is, $\tau_1$ is continuous.

 	Now given any sequences $\phi_n\to \phi\in F(S,K),\psi_n\to \psi\in F(S,K)$, to show $\phi_n\psi_n\to \phi\psi\in F(S,K^\prime),$  it suffices to prove for any $\varepsilon>0$, 
 	\[\limsup_{n\to\infty} m\left( \{  s:d(\phi_n(s)\psi_n(s),\phi(s)\psi(s))>\varepsilon  \}  \right)\leq \varepsilon. \]
 	Indeed, if this equation holds, then there exists $N\geq 1$, such that for any $n\geq N,$ one has $$m\left( \{  s:d(\phi_n(s)\psi_n(s),\phi(s)\psi(s))>2\varepsilon  \}  \right)\leq m\left( \{  s:d(\phi_n(s)\psi_n(s),\phi(s)\psi(s))>\varepsilon  \}  \right)\leq 2\varepsilon.$$
  	Since $\varepsilon$ is arbitrary, it implies $\phi_n\psi_n\to \phi\psi$ as $n\to\infty.$
  	
  	Notice that
 	\begin{align*}
                   	&\left\{  s:d(\phi_n(s)\psi_n(s),\phi(s)\psi(s))>\varepsilon  \right\}\\
\subset  	&\left\{  s:\|\phi_n(s)\psi_n(s)-\phi(s)\psi(s)\|>\frac{\varepsilon }{2} \right\} \bigcup \left\{  s:\|\psi_n(s)^{-1}\phi_n(s)^{-1}-\psi(s)^{-1}\phi(s)^{-1}\|>\frac{\varepsilon}{2}  \right\} . 
 	\end{align*}
 	since $\|\phi_n(s)\psi_n(s)-\phi(s)\psi(s)\|\leq \|\phi_n(s)-\phi(s)\|\cdot\|\psi_n(s)\|+\|\phi_(s)\|\cdot\|\psi_n(s)-\psi(s)\|$, we have
 	\begin{align*}
 	&\left\{  s:\|\phi_n(s)\psi_n(s)-\phi(s)\psi(s)\|>\frac{\varepsilon }{2} \right\}\\
 	\subset &\left\{  s:\|\phi_n(s)-\phi(s)\|\cdot\|\psi_n(s)\|>\frac{\varepsilon }{4} \right\}\bigcup\left\{  s:\|\phi_(s)\|\cdot\|\psi_n(s)-\psi(s)\|>\frac{\varepsilon }{4}\right\}.
 	\end{align*}
   We shall prove:  $\limsup\limits_{n\to\infty} m(\{  s:\|\phi_n(s)-\phi(s)\|\cdot\|\psi_n(s)\|>\frac{\varepsilon }{4} \})\leq \frac{\varepsilon}{4}.$
 	Let $A_n=\{s: \|\psi_n(s)-\psi(s)\|\leq1 \}$. Then we claim that $\lim\limits_{n\to\infty}m(S\setminus A_n)=0.$ Indeed, since $\psi_n\to \psi$ in  measure,  for any $0<\delta<1, $ we have 
 	\[\limsup_{n\to\infty}m(S\setminus A_n)\leq \limsup_{n\to\infty}m(\{ s: \|\psi_n(s)-\psi(s)\|>\delta \})\leq \delta. \]
 	Since $\delta$ is arbitrary, we conclude $\lim_{n\to\infty}m(S\setminus A_n)=0.$ Denote $D_n:=\{s:\|\psi(s)\|\leq  n \}$.  Since  $\lim\limits_{n\to\infty}m(D_n)=\lim\limits_{n\to\infty}m(\cup_{n\geq 1}D_n)=1$, we may take $N$ large enough such that  $m(D_N)>1-\frac{\varepsilon}{4}.$
 	Then we have
 	\begin{align*}
 	&\left\{  s:\|\phi_n(s)-\phi(s)\|\cdot\|\psi_n(s)\|>\frac{\varepsilon }{4} \right\}\\
 	\subset& (S\setminus A_n)\bigcup \left\{  s:\|\phi_n(s)-\phi(s)\|>\frac{\varepsilon /4}{\|\psi(s)\|+1} \right\}\\
 	\subset & (S\setminus A_n)\bigcup(S\setminus D_N)\bigcup \left\{  s:\|\phi_n(s)-\phi(s)\|>\frac{\varepsilon /4}{N+1} \right\}.
 	\end{align*}
 	Since $\phi_n\to \phi$,  we have $\limsup\limits_{n\to\infty}m(\left\{  s:\|\phi_n(s)-\phi(s)\|\cdot\|\psi_n(s)\|>\frac{\varepsilon }{4} \right\})\leq \frac{\varepsilon}{4}.$ Similarly, we can also get $\limsup\limits_{n\to\infty}m( \left\{  s:\|\phi_(s)\|\cdot\|\psi_n(s)-\psi(s)\|>\frac{\varepsilon }{4}\right\})\leq \frac{\varepsilon}{4}.$ Therefore,  
 	\[\limsup_{n\to\infty}m(\left\{  s:\|\phi_n(s)\psi_n(s)-\phi(s)\psi(s)\|>\frac{\varepsilon }{2} \right\} )\leq \frac{\varepsilon}{2}.\]
 	It can be proved analogously that
   $$\limsup_{n\to\infty} m(	\left\{  s:\|\psi_n(s)^{-1}\phi_n(s)^{-1}-\psi(s)^{-1}\phi(s)^{-1}\|>\frac{\varepsilon}{2}  \right\})\leq \frac{\varepsilon}{2}.$$
 	 Thus $\limsup\limits_{n\to\infty}m(\left\{  s:d(\phi_n(s)\psi_n(s),\phi(s)\psi(s))>\varepsilon  \right\})\leq \varepsilon.$
 	     This proves  $\phi_n\psi_n\to \phi\psi,$ that is, $\tau_2$ is continuous.	
 \end{proof}
 
 We also need the following  von Neumann's selection theorem to find the desired $\mu$-continuous map $C:M\to GL(X).$ Recall that a subset $\mathcal{F}$ of a standard Borel space $\Sigma$ is called {\em analytic} if there is a  standard Borel space $Y$ and a Borel map $\psi: Y\to \Sigma$ such that $\mathcal{F}=\psi(Y).$
  \begin{theorem}[von Neumann Selection Theorem \text{\cite[Theorem 3.4.3]{Arveson}}]\label{thm 3.13}
 	Let $(\Omega,\hat{\mu})$ be a standard Borel probability space, $L$ be a standard Borel space, and $\mathcal{F}$ be an analytic subset of $\Omega\times L$. Denote by $\Omega_\mathcal{F}$ the projection of  $\mathcal{F}$ to $\Omega$. Then there exists a Borel subset $\Omega_0\subset \Omega_\mathcal{F}$ of $\hat{\mu}$-full measure and a  Borel function $\Phi:\Omega_0\to L$, such that {\em graph}$(\Phi)\subset \mathcal{F}.$
 \end{theorem}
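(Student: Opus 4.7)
The plan is to prove the von Neumann Selection Theorem via two largely independent steps: first, a descriptive set theoretic reduction to a Borel subset of $\Omega$ of full $\hat{\mu}$-measure, and second, a constructive Borel selection over that subset. The key inputs are the facts that analytic sets are universally measurable and that standard Borel spaces can be realized as Polish spaces.

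First I would address the measure-theoretic aspect. Since $\mathcal{F}$ is analytic by hypothesis, write $\mathcal{F}=\psi(Y)$ for some standard Borel space $Y$ and Borel map $\psi:Y\to\Omega\times L$. The projection $\Omega_\mathcal{F}=\pi_\Omega(\mathcal{F})=(\pi_\Omega\circ\psi)(Y)$ is then also analytic as a Borel image of $Y$. Since every analytic subset of a standard Borel probability space is universally measurable, $\Omega_\mathcal{F}$ is $\hat{\mu}$-measurable; hence there is a Borel subset $\Omega_0\subset\Omega_\mathcal{F}$ with $\hat{\mu}(\Omega_0)=\hat{\mu}(\Omega_\mathcal{F})$. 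This Borel set will serve as the domain of the selection.

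Next I would construct the selection. Realize $L$ as a Polish space and fix a countable base $\{U_n\}_{n\geq 1}$ whose members have diameters tending to $0$ along decreasing nested sequences. For each finite sequence $\sigma=(n_1,\dots,n_k)$ set
\[
A_\sigma=\{\omega\in\Omega:\mathcal{F}_\omega\cap(U_{n_1}\cap\cdots\cap U_{n_k})\neq\emptyset\},
\]
which is the projection of an analytic set and hence analytic. Given $\omega\in\Omega_\mathcal{F}$, I would define a canonical nested sequence recursively: let $n_1(\omega)$ be the least $n$ with $\omega\in A_{(n)}$, and having defined $n_1(\omega),\dots,n_k(\omega)$, let $n_{k+1}(\omega)$ be the least $n$ such that $U_n\subset U_{n_k(\omega)}$, the basic set $U_n$ has diameter at most $2^{-k-1}$, and $\omega\in A_{(n_1(\omega),\dots,n_k(\omega),n)}$. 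Each $n_k(\cdot)$ is a function that takes constant values on countable intersections and complements of the $A_\sigma$, hence is measurable with respect to the $\sigma$-algebra generated by the analytic sets. Completeness of $L$ and the diameter condition force $\bigcap_k U_{n_k(\omega)}$ to be a single point, which I denote $\Phi(\omega)$. By construction $(\omega,\Phi(\omega))\in\overline{\mathcal{F}}$ in $\Omega\times L$; a small additional care (e.g.\ picking the basic sets to have centers in $\psi(Y)$ or verifying the limit lies in $\mathcal{F}_\omega$ via a diagonal argument on $Y$) ensures $\mathrm{graph}(\Phi)\subset\mathcal{F}$.

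Finally, since $\Phi$ is only guaranteed to be measurable with respect to the $\hat{\mu}$-completion of the Borel $\sigma$-algebra, I would invoke Lusin's theorem (or more elementarily, shrink through the definition of completion) to replace $\Omega_0$ by a Borel subset of $\hat{\mu}$-full measure on which $\Phi$ is genuinely Borel. The main obstacle in this plan is the last step: the naive selection via $n_k(\omega)$ produces only a universally measurable function, and the upgrade to a Borel selection on a set of full measure is exactly where the analytic-set machinery must be invoked; this is also the place where one really uses that $\hat{\mu}$ is a probability measure (rather than, say, a $\sigma$-finite measure), since the result otherwise would need to be stated modulo an exceptional analytic set.
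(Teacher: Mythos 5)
The paper does not prove this statement at all: it is quoted verbatim as an external result from Arveson \cite[Theorem 3.4.3]{Arveson}, so the only comparison to be made is with the standard Jankov--von Neumann uniformization argument that your sketch is implicitly trying to reproduce. Your first and last steps are fine: the projection $\Omega_\mathcal{F}$ is analytic, hence universally measurable, so it contains a Borel set of the same $\hat{\mu}$-measure; and a selection that is measurable for the $\sigma$-algebra generated by analytic sets is universally measurable, hence agrees $\hat{\mu}$-a.e.\ with a Borel map, and one can then shrink to a Borel set of full measure on which the graph condition persists. The measurability of your recursively defined indices $n_k(\omega)$ (level sets being countable Boolean combinations of the analytic sets $A_\sigma$) is also correct.

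The genuine gap is in the middle step, and it is exactly the heart of the theorem. Your construction selects shrinking basic open sets $U_{n_k(\omega)}$ in $L$ that merely \emph{meet} the fiber $\mathcal{F}_\omega$. Since $\mathcal{F}_\omega$ is analytic and in general not closed, the unique point of $\bigcap_k \overline{U_{n_k(\omega)}}$ lies only in $\overline{\mathcal{F}_\omega}$, and there is no reason for it to lie in $\mathcal{F}_\omega$; so $\mathrm{graph}(\Phi)\subset\mathcal{F}$ is not established. The remedies you mention do not close this: choosing basic sets ``with centers in $\psi(Y)$'' puts nearby points of $\mathcal{F}$ in other fibers, not in $\mathcal{F}_\omega$, and a ``diagonal argument on $Y$'' fails because preimages $z_k\in Y$ of approximating points have no convergent subsequence in a merely Polish $Y$ (no compactness). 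The standard fix is structurally different: parametrize $\mathcal{F}$ as the image of a continuous map $g$ defined on the Baire space $\mathbb{N}^{\mathbb{N}}$ (or realize $Y$ this way), note that for each $\omega\in\Omega_\mathcal{F}$ the set $C_\omega=\{x: \pi_\Omega(g(x))=\omega\}$ is a nonempty \emph{closed} subset of $\mathbb{N}^{\mathbb{N}}$, select its leftmost branch $x(\omega)$ (which lies in $C_\omega$ precisely because $C_\omega$ is closed), and set $\Phi(\omega)=\pi_L(g(x(\omega)))$; measurability then comes from the analyticity of the sets $\{\omega: N_s\cap C_\omega\neq\emptyset\}=\pi_\Omega\big(g(N_s)\big)$ indexed by finite sequences $s$. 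In other words, the selection must be performed upstairs where fibers are closed and then pushed forward; doing it directly in $L$, as you propose, cannot be repaired by ``a small additional care.''
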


 We now continue the proof of Theorem \ref{thm 3.4}.
 
By the Reductions in this subsection, we  assume $M=\Omega\times S$ and $\mu=\hat{\mu}\times m$, where $(\Omega,\hat{\mu}),(S,m)$ are standard Borel probability spaces.  Denote $f(\omega,s)=(\omega,f_\omega(s))$. Then  for any $\omega\in \Omega,$ $f_\omega(s)\in Aut_m(S)$. 
   By Lemma \ref{lemma 3.8}, the map $A:\Omega\times S\to K$ induces   the Borel maps $A_\omega:S\to K$  and $\omega\mapsto A_\omega$.
    Denote 
   \[\widetilde{K}=  \overline{\bigcup\limits_{k,m,n\in \mathbb{Z}}\left\{\mathcal{A}_x^k\mathcal{A}_y^m\mathcal{A}_z^n:x,y,z\in M\right\}}\subset GL(X).  \]
   Then $\widetilde{K}$ is a  separable, complete metric space. We define 
 	\[\sigma:\Omega\times F(S,K)\to F(S,\widetilde{K})  \] 
 	by 
 	$$\sigma(\omega,\phi)(s)=\phi(f_\omega(s))^{-1}A_\omega(s)\phi(s).$$
 	Then $\sigma$ is Borel. Indeed, Corollary \ref{cor 3.11} and Lemma \ref{lemma 3.12} show that the map   $(\omega,\phi)\mapsto (\phi\circ f_\omega)^{-1}$ is Borel. Since the second term $(\omega,\phi)\mapsto A_\omega$ and the third term  $(\omega,\phi)\mapsto \phi$ are also Borel, using the second conclusion of Lemma \ref{lemma 3.12}, we conclude  $\sigma$ is Borel measurable.  
 	
 	Denote $\mathcal{F}=\sigma^{-1}(\{Id\})$. Then $\mathcal{F}$ is an analytic subset of $\Omega\times F(S,K)$. Denote by $\Omega_\mathcal{F}$ the projection of  $\mathcal{F}$ onto $\Omega$. Then by \eqref{3.9}, $\Omega_\mathcal{F}$ is of $\hat{\mu}$-full measure. Then by  Von Neumann selection theorem \ref{thm 3.13}, there exists a  Borel map $\Phi:\Omega\to F(S,K)$, such that for $\hat{\mu}$-a.e. $\omega\in \Omega$, one has $(\omega,\Phi(\omega))\in \mathcal{F}$,  that is,
 	for $\hat{\mu}$-a.e. $\omega\in \Omega$, 
 	\[A(\omega,s)=\Phi(\omega)(f_\omega(s))\circ\Phi(\omega)(s)^{-1},\text{~for~} m\text{-}a.e.~ s\in S.\]
 	Hence by Lemma \ref{lemma 3.9}, there exists a  Borel map $C:\Omega\times S\to K$, such that for $\hat{\mu}$-a.e. $\omega$, we have $$C(\omega,s)=\Phi(\omega)(s), \text{~for~} m\text{-}a.e.~ s\in S.$$
 	Therefore, $A(x)=C(fx)C(x)^{-1}$, for $\mu=(\hat{\mu}\times m)$-a.e. $x=(\omega,s)\in M$. Since $K$ is separable, by Lusin's theorem, C is $\mu$-continuous. This completes the proof of Theorem \ref{thm 3.4}.
 \end{proof}
 %%%%%%%%%%%%%%%%%%%%%  Section 4  Proof of Theorem B   %%%%%%%%%%%%
 \section{Proof of Theorem \ref{thm B}}
 We begin by reducing the proof to  the topologically mixing case.  Since $\mu$ is  an  ergodic $f$-invariant measure on M with full support, one has $f$ is transitive. By the spectral decomposition theorem, there is an integer $k\geq 1$ such that $M=\bigsqcup_{i=1}^k \Sigma_i$, satisfying $f(\Sigma_i)=\Sigma_{i+1}$ and  $f^k|_{\Sigma_i}$ is topologically mixing for $1\leq i\leq k$. Then the normalized restriction $\mu_{\Sigma_i}$ of $\mu$ to $\Sigma_i$ is an ergodic  $f^k$-invariant measure on $\Sigma_i$ with full support and local product structure, and $\mathcal{A}^k$ is a  H\"{o}lder continuous cocycle for $f^k$ satisfying $\mathcal{A}_x^k=C(f^kx)C(x)^{-1}$ for $\mu_{\Sigma_i}$-a.e. $x\in \Sigma_i.$ Assuming Theorem \ref{thm B} holds for topologically mixing systems $f^k|_{\Sigma_i}$, then C coincides $\mu_{\Sigma_i}$-a.e. with a H\"{o}lder continuous map. Since  $M=\bigsqcup_{i=1}^k \Sigma_i$ is a disjoint union, we conclude that C coincides $\mu$-a.e. with a H\"{o}lder continuous map satisfying the equation \eqref{1.2}.
 
 Now we begin the proof of Theorem \ref{thm B} in the topologically mixing case.
 
 \subsection{Extending C to $D(N,\theta)$.}
 Since there exists a $\mu$-continuous map $C:M\to GL(X)$  satisfying \eqref{1.2},
 we can find a compact set $\hat{K}$ with positive $\mu$-measure on which $C$ is continuous and thus the norms of $C$ and $C^{-1}$ are bounded. Then by the Poincar\'{e}'s recurrence theorem, for $\mu$-a.e. $x\in \hat{K}$, there exists infinitely many $n_k$ with $f^{n_k}(x)\in\hat{K}.$ We may take such a point $x\in \hat{K}$ whose iterations satisfy \eqref{1.2}, and we may also assume the point $x$ is regular, that is,  $x$ satisfies $\lambda_+(\mathcal{A},\mu)=\lim\limits_{n\to\infty}\frac{1}{n}\log\|\mathcal{A}_x^n \|$.  Therefore, 
 \begin{align*}
 \lambda_+(\mathcal{A},\mu) =\lim\limits_{k\to\infty}\frac{1}{n_k}\log\|\mathcal{A}_x^{n_k} \|
 =\lim\limits_{k\to\infty}\frac{1}{n_k}\log\|C(f^{n_k}x)C(x)^{-1} \|
 =0.
 \end{align*}
 Similarly,   $\lambda_-(\mathcal{A},\mu)=\lim\limits_{n\to\infty}-\frac{1}{n}\log\|(\mathcal{A}_x^{n})^{-1} \|=0.$  
 Then by \cite[Lemma 2.2]{Viana08}, for any $\theta>0$ and $\mu$-a.e. $x\in M$, there exists $N\geq 1$ such that
 \[\prod_{j=0}^{k-1}\|\mathcal{A}_{f^{jN}x}^{N}\|\leq e^{kN\theta}, ~\text{and}~ \prod_{j=0}^{k-1}\|(\mathcal{A}_{f^{jN}x}^{N})^{-1}\|\leq e^{kN\theta}, \quad \forall k\geq 1. \]
 Thus 
 \[ \prod_{j=0}^{k-1}\|\mathcal{A}_{f^{jN}x}^{N}\|\cdot\|(\mathcal{A}_{f^{jN}x}^{N})^{-1}\|\leq e^{kN\theta}, \quad \forall k\geq 1.\]
 Analogously, since $\lambda_+(\mathcal{A},\mu)=\lim\limits_{n\to \infty}\frac{1}{n}\log\|\mathcal{A}_{f^{-n}x}^{n} \|$ and $\lambda_-(\mathcal{A},\mu)=\lim\limits_{n\to \infty}-\frac{1}{n}\log\|\mathcal{A}_{x}^{-n} \|$  \cite[Section 2]{Kalinin16},  equation \eqref{2.2} also holds. Hence for any $\theta>0$ and $\mu$-a.e. $x\in M$, there exists $N\geq 1$ such that $x\in D(N,\theta).$ Take $\theta<\tau\alpha.$ Then by Proposition \ref{prop 2.4},  the stable and unstable holonomies exist almost everywhere.
 
 \begin{lemma}\label{lemma 4.1}
      There exists a set  $\Omega$ of full $\mu$-measure   such that for any $x,y\in \Omega$ with $y\in W^*_{loc}(x)$, one has 
      \[H^*_{x,y}C(x)=C(y),  \] 
     where $*\in\{s,u \}$.    
 \end{lemma}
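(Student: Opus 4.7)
The plan is to build $\Omega$ as a single full-measure set on which (a) the cohomology equation $A = C(f\,\cdot\,)C^{-1}$ holds along the entire orbit (so the cocycle identity $\mathcal A_x^n = C(f^n x)C(x)^{-1}$ is available for every $n\in\mathbb Z$), (b) the stable and unstable holonomies of Proposition~\ref{prop 2.4} exist, and (c) the orbit returns with strictly positive density in both time directions to a compact set on which $C$ is uniformly continuous. Using the $\mu$-continuity of $C$, choose a compact $\hat K\subset M$ with $\mu(\hat K)>1/2$ such that $C|_{\hat K}$ and $C^{-1}|_{\hat K}$ are continuous, hence uniformly continuous in the operator norm. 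By Birkhoff's theorem applied to $\mathbf 1_{\hat K}$ under $f$ and $f^{-1}$, the sets
\[ E^{\pm} \;=\; \Bigl\{x : \lim_{n\to\infty}\tfrac{1}{n}\#\{0\le i<n:\,f^{\pm i}x\in \hat K\}=\mu(\hat K)\Bigr\} \]
have full $\mu$-measure. Let $\Omega_0=\{x:A(x)=C(fx)C(x)^{-1}\}$ and $\widetilde\Omega_0=\bigcap_{n\in\mathbb Z}f^{-n}\Omega_0$. By the argument immediately preceding the lemma, $D^{hol}:=\bigcup_{N\ge 1}D(N,\theta)$ is of full measure for our fixed $\theta<\tau\alpha$. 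Set $\Omega=E^{+}\cap E^{-}\cap\widetilde\Omega_0\cap D^{hol}$.

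Now suppose $x,y\in\Omega$ with $y\in W^s_{loc}(x)$. Using the cocycle identity on $\widetilde\Omega_0$ and multiplying on the right by $C(x)$, one obtains the crucial identity
\[ (\mathcal A_y^n)^{-1}\mathcal A_x^n\, C(x) \;=\; C(y)\,C(f^n y)^{-1}\,C(f^n x), \qquad \forall\, n\ge 0. \]
Because $\mu(\hat K)>1/2$ and $x,y\in E^{+}$, the set of indices $n$ at which simultaneously $f^n x\in\hat K$ and $f^n y\in\hat K$ has lower density at least $2\mu(\hat K)-1>0$, so it is infinite. Along any such subsequence $d(f^n x,f^n y)\to 0$ (as $y\in W^s_{loc}(x)$), and the uniform continuity of $C$ and $C^{-1}$ on $\hat K$ forces $C(f^n y)^{-1}C(f^n x)\to Id$; hence the right-hand side tends to $C(y)$ along this subsequence. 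Since $x\in D^{hol}$, the left-hand side converges as a full sequence to $H^s_{x,y}\,C(x)$ by Proposition~\ref{prop 2.4}, and the two limits must agree, giving $H^s_{x,y}C(x)=C(y)$. The unstable case is entirely symmetric: replace $E^{+}$ by $E^{-}$, iterate under $f^{-1}$, and use the analogous holonomy limit.

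The principal obstacle is manufacturing the \emph{simultaneous} returns of $x$ and $y$ to a set on which $C$ is uniformly continuous; without them, the geometric decay $d(f^n x,f^n y)\to 0$ cannot be converted into $\|C(f^n x)-C(f^n y)\|\to 0$, since a priori $C$ need not extend continuously from its $\mu$-continuity sets to the ambient manifold. The density bound $\mu(\hat K)>1/2$, combined with the elementary inequality on densities of intersections, circumvents this, and such a $\hat K$ is furnished by the $\mu$-continuity hypothesis since the exhausting compact sets satisfy $\mu(K_n)\to 1$.
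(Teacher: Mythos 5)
Your proposal is correct and follows essentially the same route as the paper: the identity $(\mathcal{A}_y^n)^{-1}\mathcal{A}_x^n\,C(x)=C(y)C(f^ny)^{-1}C(f^nx)$ on a full-measure orbit-invariant set, simultaneous returns to a continuity set $\hat K$ with $\mu(\hat K)>\tfrac12$ via Birkhoff, and the holonomy limit from Proposition \ref{prop 2.4} on $\bigcup_{N}D(N,\theta)$. The paper's proof is the same argument, only stated with slightly less explicit bookkeeping of the full-measure sets.
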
 
 \begin{proof}
 	Fix a $\theta<\tau\alpha.$ Then for any given $N\geq 1$, and for $\mu$-a.e. $x,y\in D(N,\theta)$ with $y\in W^s_{loc}(x)$, one has 
 	\begin{equation}\label{4.1}
 	H^s_{x,y}=\lim\limits_{n\to\infty}(\mathcal{A}_y^n)^{-1}\mathcal{A}_x^n=\lim\limits_{n\to\infty}C(y)C(f^ny)^{-1}C(f^nx)C(x)^{-1}. 
 	\end{equation}
 	Since $C$ is $\mu$-continuous,  we may take a compact subset K with $\mu(K)>\frac{1}{2}$ such that C is continuous on K. By  	Birkhoff  Ergodic Theorem, for $\mu$-a.e. $z\in M$, $\frac{1}{n}\Sigma_{i=0}^{n-1}\chi_K(f^iz)\to\mu(K)>\frac{1}{2}$ as $n\to\infty.$ Thus for  $\mu$-a.e. $x,y\in D(N,\theta)$ with $y\in W^s_{loc}(x)$,, there exists a sub-sequence $\{n_i\}_{i\geq 1}$  such that $f^{n_i}(x),f^{n_i}(y)\in K$ for any $i\geq 1$. Then \eqref{4.1} gives
 	\[	H^s_{x,y}=\lim\limits_{i\to\infty}C(y)C(f^{n_i}y)^{-1}C(f^{n_i}x)C(x)^{-1}=C(y)C(x)^{-1}. \]
 	Analogously,  we can also get that for $\mu$-a.e. $x,y\in D(N,\theta)$ with $y\in W^u_{loc}(x)$,  $	H^u_{x,y}=C(y)C(x)^{-1}. $
 	Taking the union over all $N\geq 1$ of the sets $D(N,\theta)$,  we obtain a   full $\mu$-measure set 
 	$\Omega$,    such that for any $x,y\in \Omega$ with $y\in W^*_{loc}(x)$, one has 
 $H^*_{x,y}C(x)=C(y), $ 	where $*\in\{s,u \}.$   	
 \end{proof}
 
 The following lemma shows that the local product structure of $\mu$ and the  holomony invariance of C imply the map C can be extended continuously to $\text{supp}(\mu|{D(N,\theta)})$.
 
 \begin{lemma}\label{Lemma 4.2}
   For any $\theta<\tau\alpha$ and $N\geq 1,$ there exists an $\alpha$-H\"older continuous map $\widehat{C}$ defined on $\text{supp}(\mu|{D(N,\theta)})$ which coincides $\mu$-a.e.  on   $\text{supp}(\mu|{D(N,\theta)})$ with C.
 \end{lemma}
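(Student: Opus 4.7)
The plan is to extend $C$ locally using the stable/unstable holonomies from Proposition \ref{prop 2.4} together with the local product structure of $\mu$, relying crucially on the holonomy-invariance relation $H^*_{x,y}C(x)=C(y)$ established in Lemma \ref{lemma 4.1}. The guiding principle is that holonomy-invariance on a full-measure set forces the extension to be determined (up to null sets) by its value at a single density point.

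First I would choose a compact set $K\subseteq D(N,\theta)\cap\Omega$ of positive measure on which $C$ and $C^{-1}$ are continuous (hence bounded), and fix a $\mu$-density point $x_0\in K$. For each $y$ in a small neighborhood $U$ of $x_0$ intersected with $\mathrm{supp}(\mu|_{D(N,\theta)})$, let $z(y):=[x_0,y]=W^s_{loc}(x_0)\cap W^u_{loc}(y)$ and define
$$\widehat{C}(y):=H^u_{z(y),y}\,H^s_{x_0,z(y)}\,C(x_0).$$
Since the holonomies depend continuously on their endpoints by Proposition \ref{prop 2.4} and $y\mapsto z(y)$ is continuous, this formula makes sense pointwise on $U$.

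Next I would verify that $\widehat{C}$ agrees $\mu$-a.e.\ with $C$. Here the hypothesis that $\mu$ has local product structure is essential: a Fubini-type argument on the local product decomposition shows that for $\mu$-a.e.\ $y\in U$ the intermediate point $z(y)$ also lies in $\Omega\cap D(N,\theta)$ with $y\in W^u_{loc}(z(y))$ and $z(y)\in W^s_{loc}(x_0)$. Two applications of Lemma \ref{lemma 4.1} then give
$$C(y)=H^u_{z(y),y}C(z(y))=H^u_{z(y),y}H^s_{x_0,z(y)}C(x_0)=\widehat{C}(y).$$
Hölder regularity of $\widehat{C}$ on $U\cap\mathrm{supp}(\mu|_{D(N,\theta)})$ is obtained by connecting any two nearby points $y_1,y_2$ through a short stable/unstable zigzag and using the estimate $\|H^*_{\cdot,\cdot}-Id\|\leq L\cdot d(\cdot,\cdot)^\alpha$ from Proposition \ref{prop 2.4}, the cocycle property $H^*_{x,z}=H^*_{y,z}H^*_{x,y}$, and the uniform bound on $C(x_0)$ and $C(x_0)^{-1}$.

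Finally, since $\mathrm{supp}(\mu|_{D(N,\theta)})$ is a compact subset of $M$, I would cover it by finitely many such neighborhoods and patch. On overlaps the locally defined continuous extensions both coincide $\mu$-a.e.\ with $C$, and $\mathrm{supp}(\mu|_{D(N,\theta)})$ has no isolated points in the relevant sense, so the continuous extensions must agree everywhere on the overlap, producing a globally defined $\alpha$-Hölder $\widehat{C}$. The main obstacle is the $\mu$-a.e.\ identification step: it requires the local product structure of $\mu$ to guarantee that the conditional measures on local stable and unstable leaves give full weight to the holonomy-friendly set $\Omega\cap D(N,\theta)$, so that the purely topological holonomy formula actually computes the original $C(y)$. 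Without the local product hypothesis (as in Theorem \ref{thm B}) this identification can fail even though $\Omega$ has full $\mu$-measure.
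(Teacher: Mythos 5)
Your construction and the $\mu$-a.e.\ identification are essentially the paper's own argument: the same formula $\widehat{C}(y)=H^u_{[x_0,y],y}H^s_{x_0,[x_0,y]}C(x_0)$ on a product neighbourhood, with the local product structure used exactly where you say it is needed. One point you should make precise: taking $x_0$ to be merely a $\mu$-density point of a good compact set is not enough by itself; what the Fubini argument actually gives is that for $\mu^u$-a.e.\ unstable coordinate $\xi$ the stable slice $[\xi,\mathcal{N}^s_x(\delta)]$ meets $\Omega$ in a set of full $\mu^s$-measure, and you must choose $x_0$ on such a slice --- this is precisely what guarantees that the intermediate point $[x_0,y]$ lies in $\Omega$ for $\mu$-a.e.\ $y$, so that Lemma \ref{lemma 4.1} can be applied twice.

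The genuine gap is in your H\"older step. From the one-sided formula, $\widehat{C}$ is automatically equivariant under \emph{unstable} holonomies (two points on the same local unstable leaf have the same intermediate point $[x_0,\cdot]$), but equivariance under \emph{stable} holonomies is not automatic. In the zigzag $y_1\to[y_1,y_2]\to y_2$ the unstable leg is fine, but on the stable leg you are forced to compare $H^u_{[x_0,y_1],y_1}$ with $H^u_{[x_0,w],w}$, $w=[y_1,y_2]$: these are unstable holonomies over the same horizontal span but along two different nearby unstable leaves, and Proposition \ref{prop 2.4} only gives \emph{continuity} of $(y,z)\mapsto H^u_{y,z}$ transversally to the leaves, not a H\"older modulus; the estimate $\|H^*_{y,z}-Id\|\leq L\,d(y,z)^\alpha$ is a within-leaf estimate and does not control this difference. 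So your zigzag yields continuity of $\widehat{C}$ but not the $\alpha$-H\"older bound. The paper closes exactly this hole by a dual construction: it builds $\widetilde{C}$ (stable holonomy applied last), which is invariant under stable holonomies and also coincides with $C$ $\mu$-a.e.; since $\text{supp}(\mu|\mathcal{N}_x(\delta))$ is itself a product set, a.e.\ coincidence plus continuity forces $\widehat{C}=\widetilde{C}$ there, so $\widehat{C}$ is invariant under \emph{both} holonomies, and only then does the leafwise estimate of Proposition \ref{prop 2.4} give $\alpha$-H\"older continuity along each leg of the zigzag, hence on the support. Your final patching over finitely many product neighbourhoods is fine and is implicit in the paper.
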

 \begin{proof}
 	Let $\delta>0$ be small enough such that for any $y,z\in M$ with $d(y,z)<2\delta$, $W^s_{loc}(y)$ intersects $W^u_{loc}(z)$ at exactly one point $[y,z].$ Given any $x\in D(N,\theta)$, denote
 	\[\mathcal{N}_x^u(\delta)=\mathcal{N}_x^u(N,\theta,\delta):=\{[y,x]:y\in B(x,\delta)\cap D(N,\theta)\},  \]
 	\[\mathcal{N}_x^s(\delta)=\mathcal{N}_x^s(N,\theta,\delta):=\{[x,y]:y\in B(x,\delta)\cap D(N,\theta)\}.  \]
 	Then $\mathcal{N}_x^*(\delta)\subset W^*_{loc}(x)$ for $*\in \{s,u\}$. Let $\mathcal{N}_x(\delta)=[\mathcal{N}_x^u(\delta),\mathcal{N}_x^s(\delta)]$ be the image of $\mathcal{N}_x^u(\delta)\times \mathcal{N}_x^s(\delta)$ under the map  $(y,z)\mapsto [y,z]$. 
    %Denote by $\mathcal{N}_x(\delta)$  the image of $\mathcal{N}_x^u(\delta)\times \mathcal{N}_x^s(\delta)$ under the map  $(y,z)\mapsto [y,z]$. 
    Since $\mu$ has local product structure, one has
    \[\text{supp}(\mu|\mathcal{N}_x(\delta))=[\text{supp}(\mu^u|\mathcal{N}_x^u(\delta)),\text{supp}(\mu^s|\mathcal{N}_x^s(\delta))], \]
    where $\mu^u|\mathcal{N}_x^u(\delta)$ and $\mu^s|\mathcal{N}_x^s(\delta)$ are the projections of $\mu|\mathcal{N}_x(\delta)$ to $\mathcal{N}_x^u(\delta)$ and $\mathcal{N}_x^s(\delta)$	respectively.
   Notice that $\mathcal{N}_x(\delta)\supset D(N,\theta)\cap B(x,\delta)$. It suffices to construct an $\alpha$-H\"older continuous map $\widehat{C}$ on $\text{supp}(\mu| {\mathcal{N}_x(\delta)})$ which coincides $\mu$-a.e.  on $\text{supp}(\mu| {\mathcal{N}_x(\delta)})$  with C.	
   
   Since $\mu$ has local product structure and $\mu(\Omega)=1$,  it gives that for $\mu^u$-a.e. $\xi\in \mathcal{N}^u_x(\delta),$ 
   \[\mu^s([\xi,\mathcal{N}^s_x(\delta)]\setminus \Omega)=0.\]
   Fix any such $\xi.$ Let $\Omega_\mathcal{N}$ be the set of points in $\mathcal{N}_x(\delta)\cap \Omega$ that lie on the local unstable leaves of $[\xi,\mathcal{N}^s_x(\delta)]\cap \Omega.$ Then we have
     \[\mu\left(\mathcal{N}_x(\delta)\setminus \Omega_\mathcal{N}\right) =0. \]
     Fix  $x_0\in [\xi,\mathcal{N}^s_x(\delta)]\cap \Omega$. For any $z\in \Omega_\mathcal{N},$ let $\eta=[x_0,z].$ Then by the construction of $\Omega_\mathcal{N}$, one has $\eta\in \Omega_\mathcal{N}.$ By Lemma \ref{lemma 4.1},  $C(z)=H^u_{\eta,z}H^s_{x_0,\eta}C(x_0).$ Define $\widehat{C}$ on $\text{supp}(\mu| {\mathcal{N}_x(\delta)})$ by 
     \[\widehat{C}(z):=H^u_{[x_0,z],z}H^s_{x_0,[x_0,z]}C(x_0), \quad \forall z\in  \text{supp}(\mu| {\mathcal{N}_x(\delta)}).\] Then by the construction, $\widehat{C}=C$ almost everywhere  on $\text{supp}(\mu| {\mathcal{N}_x(\delta)})$. Since the stable and unstable holonomies are  continuous, we conclude that $\widehat{C}$ is  continuous on $\Omega_\mathcal{N}$. Moreover,     by construction,  
     $$\widehat{C}(z)=H^u_{y,z}\widehat{C}(y)  \quad \forall z\in  \text{supp}(\mu| {\mathcal{N}_x(\delta)}),y\in [\mathcal{N}^u_x(\delta),z].$$
     That is, $\widehat{C}$ is invariant under unstable holonomies on $\text{supp}(\mu| {\mathcal{N}_x(\delta)}).$
     
     By a dual procedure, we can obtain a continuous map $\widetilde{C}$ on $\text{supp}(\mu| {\mathcal{N}_x(\delta)})$ which is invariant under stable holonomies and  coincides $\mu$-a.e.   with C  on $\text{supp}(\mu| {\mathcal{N}_x(\delta)}).$ Then by the continuity, $\widehat{C}=\widetilde{C}$. Hence $\widehat{C}$ is invariant under both stable and unstable holonomies on $\text{supp}(\mu| {\mathcal{N}_x(\delta)}).$  By Proposition \ref{prop 2.4},  $\|H_{y,z}^*-Id\|\leq L\cdot d(y,z)^\alpha$ for $y,z\in \mathcal{N}_x(\delta)$ and $y\in W^*_{loc}(z)$.  It follows that $\widehat{C}$ is  $\alpha$-H\"older continuous on every stable and unstable leaf, and thus  $\alpha$-H\"older continuous  on $\text{supp}(\mu| {\mathcal{N}_x(\delta)}).$
 \end{proof}
 
 Since $\text{supp}(\mu|{D(N,\theta)})$ may be a proper subset of $D(N,\theta)$, we use the following lemma to  obtain  a  continuous map $\widehat{C}$ defined on $D(N,\theta)$. This lemma resembles Lemma 4.5 of \cite{Butlerconformal}. We give a geometric proof here.
 \begin{lemma}\label{Lemma 4.3}
 	For any $\theta<\tau\alpha$ and $N\geq 1,$ there exist $\theta<\theta_*<\tau\alpha$ and $N_*\geq N,$ such that  
 	$$D(N,\theta)\subset\text{supp}(\mu|{D(N_*,\theta_*)}).$$
 \end{lemma}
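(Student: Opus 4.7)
My plan is to build, inside every $\varepsilon$-ball around any $x\in D(N,\theta)$, a positive-$\mu$-measure subset of $D(N_*,\theta_*)$, combining the local product structure of $\mu$ with a double application of the holonomy invariance of the $D$-condition provided by Proposition~\ref{prop 2.4}. First I fix auxiliary exponents $\theta<\theta_1<\theta_*<\tau\alpha$; since $\lambda_\pm(\mathcal A,\mu)=0$ was established at the start of Section~4, the argument via Lemma~2.2 of~\cite{Viana08} recalled there gives $\mu(\bigcup_{N_1\ge 1}D(N_1,\theta_1))=1$, and the monotonicity $D(N_1,\theta_1)\subset D(kN_1,\theta_1)$ lets me fix a single multiple $N_1$ of $N$ with $\mu(D(N_1,\theta_1))$ as close to $1$ as desired.

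Given $x$ and $\varepsilon$, since $\mu$ has full support and the \emph{good-leaf} condition---namely that the local unstable (resp.\ stable) leaf through the point carries a $\mu^u$- (resp.\ $\mu^s$-) full subset of the forward (resp.\ backward) half of $D(N_1,\theta_1)$---is a $\mu$-full measure set by Fubini applied to the local product structure, I fix a typical base point $x'\in B(x,\varepsilon/2)$ and a rectangle $R=[\mathcal N^u_{x'}(\delta),\mathcal N^s_{x'}(\delta)]\subset B(x,\varepsilon)$. Then the sets
\[A:=\{a\in\mathcal N^u_{x'}(\delta):a\text{ satisfies the forward half of }D(N_1,\theta_1)\},\]
\[B:=\{b\in\mathcal N^s_{x'}(\delta):b\text{ satisfies the backward half of }D(N_1,\theta_1)\}\]
have positive $\mu^u$- and $\mu^s$-measures respectively.

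For $(a,b)\in A\times B$, set $y:=[a,b]\in R$. Then $y\in W^s_{\mathrm{loc}}(a)\cap W^u_{\mathrm{loc}}(b)$, and by Proposition~\ref{prop 2.4} the stable holonomy at $a$ gives $\mathcal A^N_{f^{jN}y}=H^s_{f^{(j+1)N}a,f^{(j+1)N}y}\,\mathcal A^N_{f^{jN}a}\,(H^s_{f^{jN}a,f^{jN}y})^{-1}$. Exponential contraction of $d(f^{jN}a,f^{jN}y)$ on $W^s_{\mathrm{loc}}(a)$, against the H\"older exponent $\alpha$, makes the cumulative correction $\prod_{j\ge 0}(1+L\,d(f^{jN}a,f^{jN}y)^\alpha)^4$ converge to a constant $C=C(L,\alpha,\tau)$, whence
\[\prod_{j=0}^{k-1}\|\mathcal A^N_{f^{jN}y}\|\cdot\|(\mathcal A^N_{f^{jN}y})^{-1}\|\le C\cdot e^{kN_1\theta_1}\quad\text{for all }k\ge 1,\]
and the analogous unstable-holonomy computation at $b$ handles the backward direction. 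Choosing $N_*$ to be a multiple of $N_1$ large enough that $C\le e^{N_*(\theta_*-\theta_1)}$ absorbs the constant, one concludes $y\in D(N_*,\theta_*)$, so $\{[a,b]:(a,b)\in A\times B\}$ has positive $\mu$-measure inside $B(x,\varepsilon)\cap D(N_*,\theta_*)$, proving $x\in\mathrm{supp}(\mu|_{D(N_*,\theta_*)})$.

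The principal obstacle is making $N_*$ and the absorption constant $C$ depend only on $N,\theta$ and the cocycle/manifold data---not on the particular $x$ or $\varepsilon$. For $C$ this is immediate as it involves only the global holonomy constant $L$ and the exponents $\alpha,\tau$; for $N_1$ (and hence $N_*$) the global exhaustion $\mu(D(N_1,\theta_1))\to 1$ depends only on $\theta_1$, and the existence of a good base point $x'$ in every ball is guaranteed by full support combined with the $\mu$-full-measure of the good-leaf set. The delicate point to verify carefully is the convergence of the infinite holonomy correction product and its uniform bound, which is precisely where the constraint $\theta_1<\tau\alpha$ enters through the holonomy existence clause of Proposition~\ref{prop 2.4}.
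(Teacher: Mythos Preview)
Your strategy is different from the paper's and, if carried out, would actually give the stronger conclusion $\mathrm{supp}(\mu|D(N_*,\theta_*))=M$ (you never use the hypothesis $x\in D(N,\theta)$). The paper instead works ergodically: it sets $\varphi(x)=\tfrac{1}{N}\log(\|\mathcal A_x^N\|\,\|(\mathcal A_x^N)^{-1}\|)$, uses Birkhoff's theorem to produce sets $K_{J,\gamma}$ with good time averages, and then, for a \emph{specific} $x\in D(N,\theta)$, estimates $\tfrac{1}{n}\sum_{i=0}^{n-1}\varphi(f^iy)$ for $y$ in a Bowen ball about $x$ that eventually lands in $K_{J,\gamma}$; the $s$-saturation of such sets via the local product structure then yields positive-measure pieces of $D(N_*,\theta_*)$ in arbitrarily small Bowen balls around $x$. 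So the paper's argument is ``Birkhoff + shadowing along the orbit of $x$,'' while yours is ``leafwise transfer of the one-sided $D$-condition via local product structure''---a genuinely independent route.

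That said, your execution has a real gap at the holonomy step. Proposition~\ref{prop 2.4} is stated for base points in the full two-sided set $D(N,\theta)$, whereas your point $a$ lies only in the forward half $D^+(N_1,\theta_1)$; even if one grants the (true) one-sided version for the \emph{existence} of $H^s_{a,y}$, you then need the H\"older estimate $\|H^s_{f^{jN_1}a,\,f^{jN_1}y}-Id\|\le L\,d(f^{jN_1}a,f^{jN_1}y)^\alpha$ with a \emph{uniform} $L$ for all $j\ge 0$. The intertwining $H^s_{f^na,f^ny}=\mathcal A_y^n\,H^s_{a,y}\,(\mathcal A_a^n)^{-1}$ gives existence but not this bound, and the forward half of $D(N_1,\theta_1)$ is \emph{not} preserved under $f^{N_1}$ (the defining products for $f^{N_1}a$ acquire the extra factor $t_0=\|\mathcal A^{N_1}_a\|\,\|(\mathcal A^{N_1}_a)^{-1}\|$, which compounds along the orbit). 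So the infinite product $\prod_{j\ge0}(1+L\,d(\cdot)^\alpha)^4$ you write down is not justified as stated.

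The fix is simple and avoids holonomies entirely: compare the block norms termwise using only the $\alpha$-H\"older continuity of $A$. Since $N_1$ is fixed, $z\mapsto\|\mathcal A^{N_1}_z\|$ and $z\mapsto\|(\mathcal A^{N_1}_z)^{-1}\|$ are $\alpha$-H\"older with some constant $C_{N_1}$, and because $d(f^{jN_1}y,f^{jN_1}a)\le e^{-\tau jN_1}d(y,a)$ along $W^s_{\mathrm{loc}}(a)$ one gets
\[
\|\mathcal A^{N_1}_{f^{jN_1}y}\|\le \|\mathcal A^{N_1}_{f^{jN_1}a}\|\bigl(1+C'_{N_1}e^{-\tau\alpha jN_1}\bigr),
\]
and likewise for the inverses; the resulting correction product $\prod_{j\ge0}(1+C'_{N_1}e^{-\tau\alpha jN_1})^2$ is a finite constant depending only on $N_1,\alpha,\tau$ and the H\"older data of $A$. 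This gives $y\in D^+(N_*,\theta_*)$ exactly as you want, and the backward half follows symmetrically from $b\in D^-(N_1,\theta_1)$ along $W^u_{\mathrm{loc}}(b)$. With this replacement your argument goes through and indeed yields $\mathrm{supp}(\mu|D(N_*,\theta_*))=M$.
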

 \begin{proof}
    For any  $\theta<\tau\alpha$, since $\lambda_+(\mathcal{A},\mu)=\lambda_-(\mathcal{A},\mu)=0$, we may take $N\geq 1$ large enough such that \[\frac{1}{N}\int\log(\|\mathcal{A}_x^N\|\cdot\|(\mathcal{A}_x^N)^{-1}\|)d\mu<\theta.\] 
    Set $\varphi(x)=\frac{1}{N}\log(\|\mathcal{A}_x^N\|\cdot\|(\mathcal{A}_x^N)^{-1}\|).$ 
    Given any $\gamma<(\tau\alpha-\theta)/3$,  denote
    \[K_{J,\gamma}=\{y\in M:\frac{1}{n}\sum_{i=0}^{n-1}\varphi(f^iy)\leq \int\varphi d\mu+\gamma<\theta+\gamma, ~~\forall n\geq J\}.\]
    Then $K_{1,\gamma}\subset K_{2,\gamma} \subset\cdots$, and the Birkhoff Ergodic Theorem gives $\mu(\bigcup_{j\geq 1} K_{j,\gamma})=1$. Thus $\lim\limits_{j\to \infty}\mu(K_{j,\gamma})=1.$ Take $\delta>0$ small enough such that for any $y,z\in M$ with $d(y,z)<3\delta$,  one has $|\varphi(y)-\varphi(z)|<\frac{\gamma}{2}$.  Then $W^s_{3\delta}(K_{J,\gamma/2})\subset K_{J,\gamma}.$
     Consider $U_x=[W^u_\delta(x),W^s_\delta(x)]$, that is, the image of $W^u_\delta(x)\times W^s_\delta(x)$ under the map  $(\xi,\eta)\mapsto [\xi,\eta]$. Then we have 
     \begin{equation}\label{4.2}
     [y,W^s_\delta(z)]\subset K_{J,\gamma},\quad \forall y\in K_{J,\gamma/2}\cap U_z.
     \end{equation}
      Since $\mu$ has full support, we may fix $J\geq1 $ large enough such that for any $z\in M,$ 
       \begin{equation}\label{4.3}
     \mu(K_{J,\gamma/2}\cap U_z)>0.
      \end{equation}
      
    Now for any $x\in D(N,\theta)$, and any $y\in B_m(x,2\delta)\cap f^{-m}(K_{J,\gamma})$ for some $m\geq 1$, where 
    $B_m(x,2\delta)=\{y: d(f^ix,f^iy)<2\delta,~\forall 0\leq i\leq m-1\},$  we estimate $\frac{1}{n}\sum_{i=0}^{n-1}\varphi(f^iy).$    If $n\leq m,$ then
    \begin{align*}
    \frac{1}{n}\sum_{i=0}^{n-1}\varphi(f^iy)&=\frac{1}{n}\sum_{i=0}^{n-1}\big(\varphi(f^iy)-\varphi(f^ix) \big)+\frac{1}{n}\sum_{i=0}^{n-1}\varphi(f^ix)\\
                                                                     &\leq \frac{\gamma}{2}+\theta.
    \end{align*}
    If $n>m,$ then 
  \begin{equation*}
     \begin{split}
    \frac{1}{n}\sum_{i=0}^{n-1}\varphi(f^iy)&=\frac{1}{n}\sum_{i=0}^{m-1}\big(\varphi(f^iy)-\varphi(f^ix) \big)+\frac{1}{n}\sum_{i=0}^{m-1}\varphi(f^ix)+\frac{1}{n}\sum_{i=m}^{n-1}\varphi(f^iy)\\
    &\leq \frac{m}{n}\frac{\gamma}{2}+\frac{m}{n}\theta+\frac{1}{n}\sum_{j=0}^{n-m-1}\varphi(f^j(f^my)).
    \end{split}
  \end{equation*}
    Since $f^m(y)\in K_{J,\gamma},$ we have 
    \begin{align*}
    \frac{1}{n}\sum_{j=0}^{n-m-1}\varphi(f^j(f^my))\leq  
    \begin{cases}
    	\frac{n-m}{n}(\theta+\gamma), & \mbox{if $n-m\geq J$},\\
    		\frac{J}{n}\|\varphi\|, &\mbox{if $n-m<J$}.
    \end{cases}
\end{align*}
  Take R large enough such that $	\frac{J}{R}\|\varphi\|\leq \gamma.$ Then for any $n\geq R,$  one has 
  \[\frac{1}{n}\sum_{j=0}^{n-m-1}\varphi(f^j(f^my))\leq  \frac{n-m}{n}(\theta+\gamma)+\gamma\leq \frac{n-m}{n}\theta+2\gamma.   \]
  Thus we conclude that for any $x\in D(N,\theta)$,  $y\in B_m(x,2\delta)\cap f^{-m}(K_{J,\gamma})$, and any $n\geq R$, we have
  \[ \frac{1}{n}\sum_{i=0}^{n-1}\varphi(f^iy)\leq \theta+3\gamma. \]
 Let $N_1=NR$, then for  any $x\in D(N,\theta)$,  $y\in B_m(x,2\delta)\cap f^{-m}(K_{J,\gamma})$, we have
   \[ \prod_{j=0}^{k-1}\|\mathcal{A}_{f^{jN_1}y}^{N_1}\|\cdot\|(\mathcal{A}_{f^{jN_1}y}^{N_1})^{-1}\|\leq e^{N\sum_{j=0}^{kR-1}\varphi(f^jy)}\leq  e^{kN_1(\theta+3\gamma)}, \quad \forall k\geq 1.\]
   We claim that  for any $x\in D(N,\theta)$,  $ B_m(x,2\delta)\cap f^{-m}(K_{J,\gamma})$ contains a $\mu$-positive subset which is $W^s$-saturated on $U_x$. Indeed, let 
   \[K^u :=f^{-m}([K_{J,\gamma/2}\cap U_{f^mx},f^m(x)])\subset W^u_\delta(x).\]
   Then
   $K_{J,\gamma/2}\cap U_{f^mx}\subset [f^m(K^u),W^s_\delta(f^mx)]. $
   Since $[f^m(K^u),W^s_\delta(f^mx)]\subset[K_{J,\frac{\gamma}{2}}\cap U_{f^mx},W^s_\delta(f^mx)]$,
    by \eqref{4.2},  we obtain
    $$K_{J,\gamma/2}\cap U_{f^mx}\subset[f^m(K^u),W^s_\delta(f^mx)]\subset K_{J,\gamma}.$$
   By \eqref{4.3} and $\mu$ is $f$-invariant,  $$\mu\left(f^{-m}( [f^m(K^u),W^s_\delta(f^mx)])\right)=\mu\left( [f^m(K^u),W^s_\delta(f^mx)]\right)\geq \mu(K_{J,\gamma/2}\cap U_{f^mx})>0.$$
   Since   $f^{-m}( [f^m(K^u),W^s_\delta(f^mx)])$ is $W^s$-saturated on $f^{-m}(U_{f^mx})$ and $\mu$ has local product structure, it implies that 
   \[\mu^u(K^u)>0, \quad\mbox{and} ~~[K^u,W^s_\delta(x)]\subset f^{-m}(K_{J,\gamma}). \] 
   Since $K^u\subset f^{-m}(W^u_\delta(f^mx))\subset B_m(x,\delta)$, one has 
$[K^u,W^s_\delta(x)]\subset B_m(x,2\delta)$.
Thus we conclude that 
$$[K^u,W^s_\delta(x)]\subset B_m(x,2\delta)\cap f^{-m}(K_{J,\gamma}).$$

Replace $f,A$ by $f^{-1},A^{-1}$ in the above proof, we can obtain a subset $K^\prime_{J^\prime,\gamma} $ and $N_2\geq N$ such that for  any $x\in D(N,\theta)$,  $y\in B_m^{-}(x,2\delta)\cap f^{m}(K^\prime_{J^\prime,\gamma})$, we have
\[ \prod_{j=0}^{k-1}\|\mathcal{A}_{f^{-jN_2}y}^{-N_2}\|\cdot\|\left(\mathcal{A}_{f^{-jN_2}y}^{-N_2}\right)^{-1}\|\leq  e^{kN_2(\theta+3\gamma)}, \quad \forall k\geq 1,\]
where $B_m^{-}(x,2\delta)=\{y: d(f^ix,f^iy)<2\delta,~\forall -m+1\leq i\leq 0\}.$  We can also obtain a subset $K^s\subset W^s_\delta(x)$ such that 
\[\mu^s(K^s)>0, \quad\mbox{and} ~~ [W^u_\delta(x),K^s]\subset B_m^-(x,2\delta)\cap f^{m}(K^\prime_{J^\prime,\gamma}).\]

Let $N_*=N_1N_2,~\theta_*=\theta+3\gamma,$ and
\[E_m(x)=B_m(x,2\delta)\cap f^{-m}(K_{J,\gamma})\cap B_m^{-}(x,2\delta)\cap f^{m}(K^\prime_{J^\prime,\gamma}). \]
Then  we conclude that for any $x\in D(N,\theta), y\in E_m(x)$, one has $y\in D(N_*,\theta_*)$. 
Since $E_m(x)\supset[K^u,K^s]$, $\mu^u(K^u)>0,\mu^s(K^s)>0$ and $\mu$ has local product structure,  it follows that $\mu(E_m(x))>0$. Furthermore, by \cite[p.140]{Hirsch1970Stable}(see also \cite[Lemma 4.2]{Bowen470}), there exists $0<\beta<1$ such that $B_m(x,2\delta)\cap B_m^-(x,2\delta)\subset B(x,\beta^m)$. Thus it follows from $E_m(x)\subset D(N_*,\theta_*)\cap B(x,\beta^m)$ and $\mu(E_m(x))>0$ for every $m\geq1$ that $x\in \text{supp}(\mu|{D(N_*,\theta_*)}).$
 \end{proof}

  Now we can extend C to $D(N,\theta)$.
  \begin{proposition}\label{Prop 4.4}
       For any $\theta<\tau\alpha$ and $N\geq 1,$ there exists an $\alpha$-H\"older continuous map $\widehat{C}$ defined on $D(N,\theta)$ such that for any $n\geq 1$ and any  $x,f^n(x)\in D(N,\theta)$, we have
       \[\mathcal{A}_x^n=\widehat{C}(f^nx)\widehat{C}(x)^{-1}. \]
  \end{proposition}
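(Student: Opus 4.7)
The plan is to invoke Lemmas \ref{Lemma 4.2} and \ref{Lemma 4.3} to produce $\widehat{C}$ on a set containing $D(N,\theta)$, and then to verify the cocycle identity $\mathcal{A}_x^n = \widehat{C}(f^n x)\widehat{C}(x)^{-1}$ by combining a holonomy-covariance identity for $\mathcal{A}$ with the local product structure of $\mu$.

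First I apply Lemma \ref{Lemma 4.3} to the given pair $(N,\theta)$ to obtain $\theta_* \in (\theta,\tau\alpha)$ and $N_*\geq N$ with $D(N,\theta) \subset S := \mathrm{supp}(\mu|{D(N_*,\theta_*)})$. Lemma \ref{Lemma 4.2} applied with parameters $(N_*,\theta_*)$ then yields an $\alpha$-H\"older continuous map $\widehat{C}:S\to GL(X)$ that coincides $\mu$-a.e.\ with $C$. The construction of $\widehat{C}$ in the proof of Lemma \ref{Lemma 4.2} also shows that $\widehat{C}$ is invariant under local stable and unstable holonomies on $S$, that is $\widehat{C}(z) = H^{\ast}_{y,z}\widehat{C}(y)$ whenever $y,z\in S$ lie on a common local stable ($\ast=s$) or unstable ($\ast=u$) leaf. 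Restriction to $D(N,\theta)\subset S$ gives the $\alpha$-H\"older map claimed in the proposition.

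To verify the cocycle identity, define on $\{x\in S : f^n x\in S\}$ the continuous map $\Psi_n(x) := \widehat{C}(f^n x)^{-1}\,\mathcal{A}_x^n\,\widehat{C}(x)$. Since $\widehat{C}=C$ $\mu$-a.e.\ and $C$ satisfies the cocycle identity $\mu$-a.e., there is a $\mu$-full subset $\Omega_\ast$ of the domain of $\Psi_n$ on which $\Psi_n\equiv Id$. From $H^s_{y,z}=\lim_{k\to\infty}(\mathcal{A}_z^k)^{-1}\mathcal{A}_y^k$ one directly derives the covariance identity $A(z)H^s_{y,z} = H^s_{fy,fz}A(y)$ for $y,z$ on a common local stable leaf, which iterates to $\mathcal{A}_y^n = H^s_{f^n x,f^n y}\,\mathcal{A}_x^n\,H^s_{y,x}$. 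Substituting this together with the holonomy equivariance $\widehat{C}(y)=H^s_{x,y}\widehat{C}(x)$ and $H^s_{x,y}H^s_{y,x}=Id$ into $\Psi_n(y)$ yields $\Psi_n(y)=\Psi_n(x)$ for every $y\in W^s_{\mathrm{loc}}(x)$ with $y,f^n y\in S$. The analogous statement for unstable holonomies follows symmetrically, so $\Psi_n$ is invariant under both families of local holonomies on its domain.

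Fix now $x\in D(N,\theta)\cap f^{-n}(D(N,\theta))\subset S\cap f^{-n}(S)$, and work in a local product neighborhood $U$ of $x$ in which $\mu\sim \mu^u\times\mu^s$. Writing $x=[\xi_x,\eta_x]$, Fubini applied to the $\mu$-null complement of $\Omega_\ast$ shows that for $\mu^u$-a.e.\ $\xi$ the unstable plaque $[\xi,\cdot]$ meets $\Omega_\ast$ in a set of full $\mu^s$-measure; the unstable-holonomy invariance of $\Psi_n$ then forces $\Psi_n\equiv Id$ at every point of the domain lying on such a plaque. A dual Fubini argument along stable plaques together with the stable-holonomy invariance propagates $\Psi_n=Id$ to $x$ itself, producing the desired identity $\mathcal{A}_x^n = \widehat{C}(f^n x)\widehat{C}(x)^{-1}$ on $D(N,\theta)\cap f^{-n}(D(N,\theta))$.

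The main obstacle is the last step: the successive holonomy moves used to transfer $\Psi_n=Id$ to $x$ must keep intermediate points inside $S\cap f^{-n}(S)$, yet $S$ need not be globally product-structured. I would handle this by enlarging $(N_*,\theta_*)$ in the application of Lemma \ref{Lemma 4.3} so that $\mu(D(N_*,\theta_*))$ is close to $1$; then $D(N_*,\theta_*)\cap f^{-n}(D(N_*,\theta_*))$ has full $\mu$-measure in every local product chart meeting $D(N,\theta)\cap f^{-n}(D(N,\theta))$, and the local product structure of $\mu$ together with the holonomy invariance of $\Psi_n$ supplies the Fubini-based propagation above.
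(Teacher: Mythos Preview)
Your construction of $\widehat{C}$ via Lemmas~\ref{Lemma 4.2} and~\ref{Lemma 4.3} is fine, and your computation that $\Psi_n$ is invariant under local $s$/$u$ holonomies (where defined) is correct. The difficulty you flag at the end, however, is genuine and your proposed fix does not close it.

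Making $\mu(D(N_*,\theta_*))$ close to $1$ does \emph{not} give that $D(N_*,\theta_*)\cap f^{-n}(D(N_*,\theta_*))$ has full $\mu$-measure in a local product chart around $x$; it is only close to full. More to the point, to transport $\Psi_n=Id$ to the specific point $x$ (either by your holonomy moves or, equivalently, by continuity of $\Psi_n$) you need $x\in\mathrm{supp}\big(\mu|_{S\cap f^{-n}(S)}\big)$. You only know $x\in S=\mathrm{supp}(\mu|D(N_*,\theta_*))$, so every ball $U$ around $x$ satisfies $\mu(U\cap D(N_*,\theta_*))>0$; but this quantity can be arbitrarily small, and nothing prevents $U\cap D(N_*,\theta_*)$ from lying entirely in the complement of $f^{-n}(S)$, no matter how close $\mu(S)$ is to $1$. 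So neither the Fubini propagation nor a density argument goes through with a fixed $(N_*,\theta_*)$.

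The paper resolves this by an $n$-dependent enlargement rather than a measure-theoretic one. Fix $\varepsilon<\tau\alpha-\theta_*$ and, for each $n$, choose $N'=N'(n)$ (a multiple of $N_*$) so large that $R^{4n}<e^{N'\varepsilon}$, where $R=\max_y\{\|A(y)\|,\|A(y)^{-1}\|\}$. A direct estimate then gives the inclusion
\[
f^n\big(D(N',\theta_*)\big)\ \subset\ D(N',\theta_*+\varepsilon),
\]
hence $D(N',\theta_*)\subset S'\cap f^{-n}(S')$ with $S':=\mathrm{supp}(\mu|D(N',\theta_*+\varepsilon))$. Applying Lemma~\ref{Lemma 4.2} on $S'$ yields a continuous $C'$ with $\mathcal{A}_y^n=C'(f^ny)C'(y)^{-1}$ for $\mu$-a.e.\ $y\in D(N',\theta_*)$. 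Since $x\in D(N,\theta)\subset\mathrm{supp}(\mu|D(N_*,\theta_*))\subset\mathrm{supp}(\mu|D(N',\theta_*))$, one approximates $x$ by such $y$ and passes to the limit. Finally $C'=\widehat{C}$ on $D(N,\theta)$ because both are continuous and agree $\mu$-a.e.\ on $\mathrm{supp}(\mu|D(N_*,\theta_*))$. The missing idea in your argument is exactly this $n$-dependent choice of $N'$ guaranteeing $f^n(D(N',\theta_*))\subset D(N',\theta_*+\varepsilon)$; once you have it, either your holonomy propagation or the simpler continuity argument finishes the proof.
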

 \begin{proof}
 	For any $\theta<\tau\alpha$ and $N\geq 1,$ Lemma \ref{Lemma 4.3} gives $N_*,\theta_*$ such that 
 	$$D(N,\theta)\subset\text{supp}(\mu|{D(N_*,\theta_*)}).$$
 	By Lemma \ref{Lemma 4.2}, there is an $\alpha$-H\"older continuous map $\widehat{C}$ defined on  $\text{supp}(\mu|{D(N_*,\theta_*)})$ which coincides $\mu$-a.e.  on   $\text{supp}(\mu|{D(N_*,\theta_*)})$ with C.  
 	
 	   Fix a $\varepsilon<\tau\rho-\theta_*.$ Now given any $x, f^nx\in D(N,\theta)$, take $N^\prime=N^\prime(n)$ large enough such that $R^{4n}<e^{N^\prime \varepsilon}$, where $R=\max_{y\in M}\{\|A(y)\|,\|A(y)^{-1}\|\}$. We may also assume that $N^\prime$ can be divided by $N_*$ so that 
 	   \[D(N_*,\theta_*)\subset D(N^\prime,\theta_*). \]
 	   For any $y\in M,$  $\|\mathcal{A}_{f^ny}^{N^\prime}\|=\|\mathcal{A}_{f^ny}^{N^\prime}\mathcal{A}_{y}^{n}(\mathcal{A}_y^n)^{-1}\|=\|\mathcal{A}_{f^{N^\prime}y}^n\mathcal{A}_y^{N^\prime}(\mathcal{A}_y^n)^{-1}\|\leq R^{2n}\|\mathcal{A}_y^{N^\prime}\|$. Similarly $\|(\mathcal{A}_{f^ny}^{N^\prime})^{-1}\|\leq R^{2n}\|(\mathcal{A}_{y}^{N^\prime})^{-1}\|$. It follows that  for any $y\in D(N^\prime,\theta_*)$ and $k\geq 1$,
 	  \begin{align*}
 	   \prod_{i=0}^{ k-1}\|\mathcal{A}_{f^{iN^\prime}(f^ny)}^{N^\prime}\|\|(\mathcal{A}_{f^{iN^\prime}(f^ny)}^{N^\prime})^{-1}\|&\leq R^{4n}\|\mathcal{A}_{f^{iN^\prime}(y)}^{N^\prime}\|\|(\mathcal{A}_{f^{iN^\prime}(y)}^{N^\prime})^{-1}\|\\
 	   &\leq R^{4nk}e^{kN^\prime\theta_*}\leq e^{kN^\prime(\theta_*+\varepsilon)}. 
 	  \end{align*}
 	   Replace $f,A$ by $f^{-1},A^{-1}$, the dual inequality can be proved analogously. Thus we conclude that $f^n(D(N^\prime,\theta_*)) \subset D(N^\prime,\theta_*+\varepsilon).$ Hence
 	   \begin{equation}\label{4.4}
 	   f^n(\text{supp}(\mu|{D(N^\prime,\theta_*)}))=\text{supp}(\mu|{f^n(D(N^\prime,\theta_*))}) \subset \text{supp}(\mu|{D(N^\prime,\theta_*+\varepsilon)}).
 	   \end{equation}
 	   By Lemma \ref{Lemma 4.2},, there exists a  continuous map ${C}^\prime$ defined on $\text{supp}(\mu|{D(N^\prime,\theta_*+\varepsilon)})$ which coincides $\mu$-a.e.  on   $\text{supp}(\mu|{D(N^\prime,\theta_*+\varepsilon)})$ with C.  Since $$\text{supp}(\mu|{D(N_*,\theta_*)})\subset \text{supp}(\mu|{D(N^\prime,\theta_*+\varepsilon)}),$$
 	   and $\widehat{C},C^\prime$  coincide $\mu$-a.e.  on   $\text{supp}(\mu|{D(N_*,\theta_*)})$ with C, one has 
 	   \begin{equation}\label{4.5}
 	   \widehat{C}(z)=C^\prime(z),\quad\forall z\in  D(N,\theta)\subset\text{supp}(\mu|{D(N_*,\theta_*)}).
 	   \end{equation}
 	   By \eqref{1.2} and \eqref{4.4},  for $\mu$-a.e. $y\in \text{supp}(\mu|{D(N^\prime,\theta_*)})$,  we have
 	  \begin{equation}\label{4.6}
 	   \mathcal{A}_y^n=C(f^ny)C(y)^{-1}=C^\prime(f^ny)C^\prime(y)^{-1}. 
 	  \end{equation}
 	   Take a sequence $\{y_k\}_{k\geq1}\subset \text{supp}(\mu|{D(N^\prime,\theta_*)})$ such that $y_k\to x$ as $k\to \infty.$
 	   Then by \eqref{4.6} and  \eqref{4.5}, we obtain 
 	   \[\mathcal{A}_x^n=C^\prime(f^nx)C^\prime(x)^{-1}=\widehat{C}(f^nx)\widehat{C}(x)^{-1}. \]
 	   This completes the proof. 	   
    \end{proof}
\subsection{Periodic obstructions of $\mathcal{A}$} 	   The key proposition we will prove in this subsection is that the measurable coboundary implies the periodic obstructions of $\mathcal{A}$.

\begin{proposition}\label{Proposition 4.5}
	Suppose that $\mathcal{A}$ is a measurable coboundary, that is, $\mathcal{A}$ satisfies the equation \eqref{1.2}. Then 
	\[\mathcal{A}_p^n=Id,\quad \forall p=f^n(p),\forall n\geq 1.\]
\end{proposition}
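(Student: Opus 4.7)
The plan is to use Proposition \ref{Prop 4.4} to approximate the periodic orbit by regular orbits on which the cocycle $\mathcal{A}^n$ factors through the H\"older extension $\widehat{C}$, and then pass to the limit. The proposition already tells us that the only obstacle to $\mathcal{A}_x^n = \widehat{C}(f^nx)\widehat{C}(x)^{-1}$ holding everywhere is membership of both $x$ and $f^n(x)$ in $D(N,\theta)$. So the task reduces to finding a sequence $x_k \to p$ with both $x_k$ and $f^n(x_k)$ lying in some fixed $D(N,\theta)$.

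First I would fix $\theta < \tau\alpha$. As remarked in the beginning of Section 4.1, since $\lambda_+(\mathcal{A},\mu) = \lambda_-(\mathcal{A},\mu) = 0$, we have $\mu\bigl(\bigcup_N D(N,\theta)\bigr) = 1$ and hence $\mu(D(N,\theta)) \to 1$ as $N\to\infty$. Given the fixed integer $n$, I would then pick $N$ large enough so that
\[
\mu\bigl(D(N,\theta)\cap f^{-n}(D(N,\theta))\bigr) > 0,
\]
using $f$-invariance of $\mu$. Because $\mu$ has full support, any set of positive $\mu$-measure is dense in $M$, so the intersection above is dense. In particular, there is a sequence $x_k \to p$ with $x_k \in D(N,\theta)$ and $f^n(x_k)\in D(N,\theta)$; and since $f^n(p)=p$, continuity of $f^n$ gives $f^n(x_k)\to p$ as well.

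Next I would apply Proposition \ref{Prop 4.4} to conclude $\mathcal{A}_{x_k}^n = \widehat{C}(f^nx_k)\widehat{C}(x_k)^{-1}$ for every $k$. Using the $\alpha$-H\"older continuity of $\widehat{C}$ on $D(N,\theta)$ with some constant $L$, we get
\[
\|\widehat{C}(f^nx_k) - \widehat{C}(x_k)\| + \|\widehat{C}(f^nx_k)^{-1} - \widehat{C}(x_k)^{-1}\| \leq L\cdot d(f^nx_k,x_k)^{\alpha} \to 0.
\]
In particular $\{\widehat{C}(x_k)\}$ and $\{\widehat{C}(x_k)^{-1}\}$ are Cauchy, hence uniformly bounded, say by $B$. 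Writing
\[
\mathcal{A}_{x_k}^n - Id = \bigl(\widehat{C}(f^nx_k)-\widehat{C}(x_k)\bigr)\widehat{C}(x_k)^{-1},
\quad
(\mathcal{A}_{x_k}^n)^{-1}-Id = \widehat{C}(x_k)\bigl(\widehat{C}(f^nx_k)^{-1}-\widehat{C}(x_k)^{-1}\bigr),
\]
both right-hand sides are bounded by $B\cdot L\cdot d(f^nx_k,x_k)^\alpha\to 0$, so $d(\mathcal{A}_{x_k}^n, Id)\to 0$ in the metric of $GL(X)$. Since $A$ is continuous, so is $x\mapsto \mathcal{A}_x^n$, and therefore $\mathcal{A}_p^n = \lim_k \mathcal{A}_{x_k}^n = Id$.

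The main obstacle I anticipate is the one already handled above: ensuring that $p$ can be approximated by points $x_k$ with both $x_k$ and $f^n(x_k)$ in the same fiber-bunching-like set $D(N,\theta)$ where $\widehat{C}$ is defined. This is precisely where the full support of $\mu$ (to obtain density from positive measure) and the vanishing of Lyapunov exponents (to make $\mu(D(N,\theta))$ arbitrarily close to one) combine with $f$-invariance of $\mu$ to guarantee the required intersection is dense. Once that is in place, the rest is routine continuity and H\"older estimates.
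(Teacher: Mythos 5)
Your argument has a genuine gap at its central step: the claim that, because $\mu$ has full support, ``any set of positive $\mu$-measure is dense in $M$.'' This is false; full support means every open set has positive measure (equivalently, every set of \emph{full} measure is dense), but a set such as $D(N,\theta)\cap f^{-n}(D(N,\theta))$, whose measure is merely positive (or even close to $1$), can perfectly well miss an entire neighborhood of the periodic point $p$. Worse, this step is exactly where the real difficulty of the proposition is hiding: $D(N,\theta)$ is defined by countably many non-strict inequalities involving the continuous functions $x\mapsto\|\mathcal{A}^{N}_{f^{jN}x}\|\,\|(\mathcal{A}^{N}_{f^{jN}x})^{-1}\|$, so it is closed, and therefore producing a sequence $x_k\to p$ with $x_k\in D(N,\theta)$ would already force $p\in D(N,\theta)$, which (applying the defining inequality along the periodic orbit and letting $k\to\infty$) gives $\lambda_+(p)-\lambda_-(p)\le\theta$. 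In other words, your density claim silently presupposes that the Lyapunov exponents of $\mathcal{A}$ at the arbitrary periodic orbit are essentially equal — but controlling the periodic data is precisely the content of the proposition, since the coboundary hypothesis \eqref{1.2} only holds $\mu$-a.e.\ and says nothing a priori about a specific periodic orbit of measure zero.

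The paper's proof shows what is actually needed to bridge this gap: first it uses the periodic approximation theorem of Kalinin--Sadovskaya to produce \emph{one} periodic point $p_1\in D(k,\theta)$, for which Proposition \ref{Prop 4.4} gives $\mathcal{A}_{p_1}^k=Id$; then, for an arbitrary periodic point $p_2$, it argues by contradiction assuming $\lambda_+(p_2)>0$, uses the specification property to build a periodic point $q$ shadowing the orbit of $p_1$ for a long stretch and then the orbit of $p_2$, and uses the distortion estimates of Lemmas \ref{Lemma 4.6}, \ref{Lemma 4.7} and \ref{Lemma 4.10} (the last resting on the Gou\"ezel--Karlsson theorem) to show $q$ lies in some $D(\cdot,2\varepsilon)$; Proposition \ref{Prop 4.4} then forces $\mathcal{A}_q=Id$ along its period, and the resulting norm bounds contradict $\lambda_+(p_2)>0$. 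Only after establishing $\lambda_\pm(p_2)=0$ can one place $p_2$ itself in some $D(nm,\theta)$ and apply Proposition \ref{Prop 4.4} to conclude $\mathcal{A}_{p_2}^m=Id$. As a secondary remark, your assertion that $\{\widehat{C}(x_k)\}$ is Cauchy does not follow from $\|\widehat{C}(f^nx_k)-\widehat{C}(x_k)\|\to 0$; the uniform boundedness you need would instead come from continuity of $\widehat{C}$ on the compact set $D(N,\theta)$ — but this is a minor repair compared with the density issue, which invalidates the approach as it stands.
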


Before the proof of Proposition \ref{Proposition 4.5},  we first  estimate the norm of $\mathcal{A}$ along  an orbit segment close to a periodic one.
Let $p=f^J(p)$ be a periodic point for some $J\geq 1$,  denote $p_j=f^j(p),$ and let $\mu_p=\frac{1}{J}\sum_{i=0}^{J-1}\delta_{f^i(p)}$ be the corresponding periodic measure. Denote by $\lambda_+(p):=\lambda_+(\mathcal{A},\mu_p)$ and $\lambda_-(p):=\lambda_-(\mathcal{A},\mu_p)$ the upper and lower Lyapunov exponent of $\mathcal{A}$ with respect to $\mu_p$ respectively. 
For any $\varepsilon>0,$ define the Lyapunov norm $\|\cdot\|_{p_j}$ by
\[ \|u\|_{p_j}=\sum_{i=0}^{\infty}\|\mathcal{A}_{p_j}^iu\|e^{-(\lambda_+(p)+\varepsilon)i}+ \sum_{i=1}^{\infty}\|\mathcal{A}_{p_j}^{-i}u\|e^{(\lambda_-(p)-\varepsilon)i}. \] 
Since $p$ is a periodic point, we have that $\|\cdot\|_{p_j}$ is  uniformly equivalent to $\|\cdot\|$ for $p_j\in \mathcal{O}(p).$
Then similar to the proof of Lemma \ref{lemma 3.1} (or by Lemma 4.1 of \cite{Kalinin16}), we have 
\begin{lemma}\label{Lemma 4.6}
	Let $p$ be a periodic point of $f$. Then for any $0<\varepsilon<\frac{1}{2}\tau\alpha$, there exist  $\bar{\delta}=\bar{\delta}(p,\varepsilon)>0$ and $c=c(p,\varepsilon)>0$  such that for any  $x\in M$, $n\geq 1$ and $0<\delta<\bar{\delta}$ satisfying $d(f^jx,f^jp)\leq \delta e^{-\tau\min\{j,n-j\}},~j=0\cdots,n$, we have
	\[c^{-1}\cdot e^{j(\lambda_-(p)-2\varepsilon) }\leq m(\mathcal{A}_x^j)\leq\|\mathcal{A}_x^j\|\leq c\cdot e^{j(\lambda_+(p)+2\varepsilon) },\]
	\[c^{-1}\cdot e^{ (n-j)(\lambda_-(p)-2\varepsilon)}\leq m(\mathcal{A}_{x_j}^{n-j})\leq\|\mathcal{A}_{x_j}^{n-j}\|\leq c\cdot e^{(n-j)(\lambda_+(p)+2\varepsilon)},\]
	where $m(B):=\|B^{-1}\|^{-1}$.
\end{lemma}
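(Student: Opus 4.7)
The plan is to mimic the proof of Lemma~\ref{lemma 3.1}, replacing the regular set $\mathcal{R}_{\varepsilon,l}$ by the finite periodic orbit $\mathcal{O}(p)=\{p_0,\ldots,p_{J-1}\}$. The advantage of the periodic setting is that the Lyapunov norm $\|\cdot\|_{p_j}$ defined just above is automatically uniformly equivalent to $\|\cdot\|$ with constants depending only on $p$ and $\varepsilon$: there is $K=K(p,\varepsilon)\geq 1$ with $\|u\|\leq\|u\|_{p_j}\leq K\|u\|$ for every $j$, so one does not have to track a function analogous to $K_\varepsilon(\cdot)$ at each step. The one-step bounds
\[
e^{\lambda_-(p)-\varepsilon}\|u\|_{p_j}\;\leq\;\|A(p_j)u\|_{p_{j+1}}\;\leq\;e^{\lambda_+(p)+\varepsilon}\|u\|_{p_j}
\]
follow directly from the definition of $\|\cdot\|_{p_j}$, and the $\alpha$-H\"older regularity of $A$ together with the hypothesis $d(x_j,p_j)\leq\delta e^{-\tau\min\{j,n-j\}}$ gives $\|A(x_j)-A(p_j)\|\leq c_0\delta^\alpha e^{-\tau\alpha\min\{j,n-j\}}$.

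Writing $A(x_j)u=A(p_j)u+(A(x_j)-A(p_j))u$ and combining these three ingredients yields the perturbed one-step estimates
\begin{align*}
\|A(x_j)u\|_{p_{j+1}} &\geq \bigl(e^{\lambda_-(p)-\varepsilon}-Kc_0\delta^\alpha e^{-\tau\alpha\min\{j,n-j\}}\bigr)\|u\|_{p_j},\\
\|A(x_j)u\|_{p_{j+1}} &\leq \bigl(e^{\lambda_+(p)+\varepsilon}+Kc_0\delta^\alpha e^{-\tau\alpha\min\{j,n-j\}}\bigr)\|u\|_{p_j}.
\end{align*}
Iterating these from $j=0$ to $j=i-1$ and translating from $\|\cdot\|_{p_\bullet}$ back to $\|\cdot\|$ via the $K$-equivalence at the two endpoints produces, after factoring out $e^{i(\lambda_\pm(p)\pm\varepsilon)}$, bounds of the form
\[
K^{-1}e^{i(\lambda_-(p)-\varepsilon)}\prod_{j=0}^{i-1}\bigl(1-K'\delta^\alpha e^{(\varepsilon-\tau\alpha)\min\{j,n-j\}}\bigr)\|u\|\;\leq\;\|\mathcal{A}_x^i u\|
\]
together with a matching upper bound of the same shape.

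The main step is to show that the two products appearing above are uniformly bounded, independently of $n$ and $i$. Since $\varepsilon<\tau\alpha/2<\tau\alpha$, the series $\sum_{j\geq 0}e^{(\varepsilon-\tau\alpha)\min\{j,n-j\}}$ converges to a constant $C_0=C_0(\varepsilon,\tau,\alpha)$ independent of $n$. Choosing $\bar\delta$ small enough that $K'\bar\delta^\alpha\leq 1/2$ forces every factor of the products to lie in $[1/2,2]$, and the elementary inequalities $\log(1-t)\geq -2t$ and $\log(1+t)\leq t$ turn the log-sums into an absolutely summable tail bounded by $2K'\delta^\alpha C_0$. Absorbing this together with $K$ into a single constant $c=c(p,\varepsilon)$ upgrades the exponent $\lambda_\pm(p)\pm\varepsilon$ to $\lambda_\pm(p)\pm 2\varepsilon$ at the cost of the multiplicative factor $c$, which is the first displayed inequality. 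The second inequality, concerning $\mathcal{A}_{x_j}^{n-j}$, follows by running the same telescoping argument backward from $x_n$ to $x_j$; the distance hypothesis $d(x_j,p_j)\leq\delta e^{-\tau\min\{j,n-j\}}$ is symmetric under $j\leftrightarrow n-j$, so no new difficulty appears.
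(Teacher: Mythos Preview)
Your proposal is correct and follows exactly the approach the paper itself indicates: the paper does not give a standalone proof of Lemma~\ref{Lemma 4.6} but simply says it is ``similar to the proof of Lemma~\ref{lemma 3.1} (or by Lemma 4.1 of \cite{Kalinin16})'', and what you have written is precisely that adaptation. One very minor remark: since the periodic orbit is finite, the constant $K$ replacing $K_\varepsilon(x_{j+1})$ does not carry the factor $e^{\varepsilon\min\{j,n-j\}}$, so the $\varepsilon$ in your product exponent $e^{(\varepsilon-\tau\alpha)\min\{j,n-j\}}$ is unnecessary (you could write $e^{-\tau\alpha\min\{j,n-j\}}$), but this only weakens the bound harmlessly and the argument goes through unchanged.
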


  Moreover, if $\lambda_+(p)=\lambda_-(p)=0$, we can estimate the distortions.
  \begin{lemma}\label{Lemma 4.7}
  	Suppose that $\lambda_+(p)=\lambda_-(p)=0$. Then for any~$0<\varepsilon<\frac{1}{4}\tau\alpha$, there exist~$\widetilde{\delta}=\widetilde{\delta}(p,\varepsilon)>0,$ such that for any $0<\delta<\widetilde{\delta}$,  $x\in M$ and any $n\geq 1$ satisfying~$d(f^jx,f^jp)\leq \delta e^{-\tau\min\{j,n-j\}}$ for $j=0,\cdots,n$,
  		\[\frac{1}{2}\leq\frac{\|\mathcal{A}_p^n\|}{\|\mathcal{A}_x^n\|}\leq 2 \quad\mbox{and}\quad \frac{1}{2}\leq\frac{\|(\mathcal{A}_p^n)^{-1}\|}{\|(\mathcal{A}_x^n)^{-1}\|}\leq 2. \]
  \end{lemma}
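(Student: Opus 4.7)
The plan is to run the telescoping argument used in the proof of Lemma \ref{lemma 3.3}, but now comparing $\mathcal{A}_x^n$ with $\mathcal{A}_p^n$ rather than with the identity, exploiting the subexponential bounds from Lemma \ref{Lemma 4.6}. The starting identity is
\[
\mathcal{A}_x^n-\mathcal{A}_p^n \;=\; \sum_{i=0}^{n-1}\mathcal{A}_{x_{i+1}}^{n-i-1}\circ\bigl(A(x_i)-A(p_i)\bigr)\circ\mathcal{A}_p^i,
\]
together with the H\"older estimate $\|A(x_i)-A(p_i)\|\le c_0\delta^\alpha e^{-\tau\alpha\min\{i,n-i\}}$ coming from the shadowing hypothesis.

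Since $\|\mathcal{A}_p^n\|$ is not a priori close to any specific value (only $\lambda_\pm(p)=0$ is assumed), I would split the sum at $m=\lfloor n/2\rfloor$ and use two different factorisations, in order to extract $\|\mathcal{A}_x^n\|$ from one half and $\|\mathcal{A}_p^n\|$ from the other. For $i\le m$ I would write $\mathcal{A}_{x_{i+1}}^{n-i-1}=\mathcal{A}_x^n\,(\mathcal{A}_x^{i+1})^{-1}$ and apply the bounds $\|(\mathcal{A}_x^{i+1})^{-1}\|\le c\,e^{2\varepsilon(i+1)}$ and $\|\mathcal{A}_p^i\|\le c\,e^{2\varepsilon i}$ from Lemma \ref{Lemma 4.6}; each summand is then at most a constant times $\delta^\alpha e^{(4\varepsilon-\tau\alpha)i}\|\mathcal{A}_x^n\|$, and the hypothesis $\varepsilon<\tfrac14\tau\alpha$ makes the series geometric with sum $\tilde c_1\delta^\alpha\|\mathcal{A}_x^n\|$. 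For $i>m$ I would dually write $\mathcal{A}_p^i=(\mathcal{A}_{p_i}^{n-i})^{-1}\mathcal{A}_p^n$ and bound $\|(\mathcal{A}_{p_i}^{n-i})^{-1}\|$ and $\|\mathcal{A}_{x_{i+1}}^{n-i-1}\|$ directly, yielding a half-sum bound $\tilde c_2\delta^\alpha\|\mathcal{A}_p^n\|$. Adding gives $\|\mathcal{A}_x^n-\mathcal{A}_p^n\|\le\tilde c_1\delta^\alpha\|\mathcal{A}_x^n\|+\tilde c_2\delta^\alpha\|\mathcal{A}_p^n\|$, and choosing $\widetilde\delta$ so that $\tilde c_i\widetilde\delta^\alpha\le\tfrac13$ for $i=1,2$, the triangle inequality applied in both directions yields $\tfrac12\le\|\mathcal{A}_p^n\|/\|\mathcal{A}_x^n\|\le 2$.

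For the inverses I would expand the dual telescoping identity
\[
(\mathcal{A}_p^n)^{-1}-(\mathcal{A}_x^n)^{-1} \;=\; \sum_{i=0}^{n-1}(\mathcal{A}_p^i)^{-1}\bigl[A(p_i)^{-1}-A(x_i)^{-1}\bigr](\mathcal{A}_{x_{i+1}}^{n-i-1})^{-1},
\]
use the fact that $A^{-1}$ is automatically $\alpha$-H\"older in the metric $d$ on $GL(X)$, and run the symmetric split: for $i\le m$ factor out $\|(\mathcal{A}_x^n)^{-1}\|$ via $(\mathcal{A}_{x_{i+1}}^{n-i-1})^{-1}=\mathcal{A}_x^{i+1}(\mathcal{A}_x^n)^{-1}$, and for $i>m$ factor out $\|(\mathcal{A}_p^n)^{-1}\|$ via $(\mathcal{A}_p^i)^{-1}=(\mathcal{A}_p^n)^{-1}\mathcal{A}_{p_i}^{n-i}$. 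The main technical obstacle is precisely this two-sided bookkeeping: in Lemma \ref{lemma 3.3} the identity $\mathcal{A}_p^n=\mathrm{Id}$ collapses one factor and a single self-bound suffices, whereas here no a priori control on $\|\mathcal{A}_p^n\|$ is available, so the factorisations on each half of the split must be matched carefully to the side $\min\{i,n-i\}$ lies on, so that the resulting two-sided inequality can be inverted to produce the desired multiplicative ratio in $[\tfrac12,2]$.
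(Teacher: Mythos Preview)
Your proposal is correct and follows the paper's proof essentially line for line: the same telescoping identity, the same split at $m=\lfloor n/2\rfloor$, the same factorisations (extracting $\|\mathcal{A}_x^n\|$ on the first half via $\mathcal{A}_{x_{i+1}}^{n-i-1}=\mathcal{A}_x^n(\mathcal{A}_x^{i+1})^{-1}$ and $\|\mathcal{A}_p^n\|$ on the second via $\mathcal{A}_p^i=(\mathcal{A}_{p_i}^{n-i})^{-1}\mathcal{A}_p^n$), and the same threshold $\tilde c\,\widetilde\delta^\alpha<\tfrac13$. The dual telescoping identity you wrote for the inverses is exactly the one the paper records before declaring the second conclusion analogous.
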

  \begin{proof}
  	Denote $x_j=f^j(x)$ and $p_j=f^j(p).$ Then
  	\begin{equation}\label{4.7.0}
  	\begin{split}
  	\mathcal{A}_x^n-\mathcal{A}_p^n
  	= &  ~\mathcal{A}_{x_1}^{n-1}\circ(A(x_0)-A(p_0))+(\mathcal{A}_{x_1}^{n-1}-\mathcal{A}_{p_1}^{n-1})\circ A(p_0)\\
  	= & ~\mathcal{A}_{x_1}^{n-1}\circ(A(x_0)-A(p_0))+ \mathcal{A}_{x_2}^{n-2}\circ(A(x_1)-A(p_1))\circ A(p_0)+\\
  	& (\mathcal{A}_{x_2}^{n-2}-\mathcal{A}_{p_2}^{n-2})\circ\mathcal{A}_p^2\\
  	= & ~\cdots=\sum_{j=0}^{n-1}\mathcal{A}_{x_{j+1}}^{n-j-1}\circ(A(x_j)-A(p_j))\circ\mathcal{A}_p^j.
  	\end{split}
  	\end{equation}
  	Since   $\lambda_+(p)=\lambda_-(p)=0$, by Lemma \ref{Lemma 4.6} , for any $0\leq j\leq n$, 
  	$$\|\mathcal{A}_p^j\|\leq  c\cdot e^{2\varepsilon j }~\mbox{and}~ \|\mathcal{A}_p^j\|\leq\|(\mathcal{A}_{p_{j}}^{n-j})^{-1}\|\cdot\|\mathcal{A}_p^ n\|\leq c\cdot e^{2\varepsilon (n-j)}\|\mathcal{A}_p^ n\|.$$
  	Note that $\|A(x_j)-A(p_j)\|\leq c_0\delta^\alpha e^{-\tau\alpha \min\{j,n-j\}}.$ Denote $m=\left\lfloor \frac{n}{2}\right\rfloor,$ then Lemma \ref{Lemma 4.6} and  the fact $\varepsilon<\frac{1}{4}\tau\alpha$ give 
  	\begin{align*}
  	& \sum_{j=0}^{m}\|\mathcal{A}_{x_{j+1}}^{n-j-1}\|\cdot\|A(x_j)-A(p_j)\|\cdot\|\mathcal{A}_p^j\|\\
  	\leq & ~ \sum_{j=0}^{m}\|\mathcal{A}_x^n\|\cdot\|(\mathcal{A}_x^{j+1})^{-1}\| \cdot\|A(x_j)-A(p_j)\|\cdot\|\mathcal{A}_p^j\|\\
  	\leq &~ \|\mathcal{A}_x^n\|\cdot\sum_{j=0}^{m}ce^{2\varepsilon (j+1)}\cdot c_0\delta^\alpha e^{-
  		\tau\alpha j}\cdot ce^{2\varepsilon j}\\
  	\leq &~ \tilde{c}\delta^\alpha\cdot\|\mathcal{A}_x^n\|,
  	\end{align*}
  	and
  	\begin{align*}
  	& \sum_{j=m+1}^{n-1}\|\mathcal{A}_{x_{j+1}}^{n-j-1}\|\cdot\|A(x_j)-A(p_j)\|\cdot\|\mathcal{A}_p^j\|\\
  	\leq &~ \sum_{j=m+1}^{n-1}ce^{2\varepsilon (n-j-1)}\cdot c_0\delta^\alpha e^{-\lambda\alpha (n-j)}\cdot ce^{2\varepsilon (n-j)}\|\mathcal{A}_p^ n\|\\
  	\leq &~ \tilde{c}\delta^\alpha \|\mathcal{A}_p^ n\|,
  	\end{align*}
  	where $\tilde{c}=c_0c^2e^{2\varepsilon}/{(1-e^{4\varepsilon-\tau\alpha})}$. Therefore, 
  	\begin{equation*}
  	\begin{split}
   \|\mathcal{A}_x^n-\mathcal{A}_p^n\| &\leq \sum_{j=0}^{n-1}\|\mathcal{A}_{x_{j+1}}^{n-j-1}\|\cdot\|A(x_j)-A(p_j)\|\cdot\|\mathcal{A}_p^j\|\\
  	& \leq  \tilde{c}\delta^\alpha(\|\mathcal{A}_x^n\|+\|\mathcal{A}_p^ n\|).    
  	\end{split}   
  	\end{equation*}
  	Take $\widetilde{\delta}$ small enough such that $\tilde{c}\widetilde{\delta}^\alpha<\frac{1}{3}.$ Then for any $0<\delta<\widetilde{\delta},$ we obtain 
  	$$\frac{1}{2}\leq\frac{\|\mathcal{A}_p^n\|}{\|\mathcal{A}_x^n\|}\leq 2 .$$
  	Using
  	\begin{align*}
  	&(\mathcal{A}_x^n)^{-1}-(\mathcal{A}_p^n)^{-1}\\
  	= &  (A(x_0)^{-1}-A(p_0)^{-1})\circ(\mathcal{A}_{x_1}^{n-1})^{-1}+ A(p_0)^{-1}\circ ((\mathcal{A}_{x_1}^{n-1})^{-1}-(\mathcal{A}_{p_1}^{n-1})^{-1})\\
  	= & ~\cdots=\sum_{j=0}^{n-1}(\mathcal{A}_p^j)^{-1}\circ(A(x_j)^{-1}-A(p_j)^{-1})\circ(\mathcal{A}_{x_{j+1}}^{n-j-1})^{-1},
  	\end{align*}
  	the second  conclusion can be proved analogously.
  \end{proof}

  Lemma \ref{Lemma 4.7} holds under the condition $\lambda_+(p)=\lambda_-(p)=0$. In general, if $\lambda_\pm(p)\neq 0,$   the conclusion of Lemma \ref{Lemma 4.7} may not hold for all $n\in N$. However,  we shall show that if the distance of $f^i(x)$ and $f^i(p)$ are much closer, then  there exist infinitely many n such that the same conclusion holds.   We will use the following result by S. Gou\"{e}zel and A. Karlsson \cite{Gouezel&Karlsson}. 
\begin{proposition}[\cite{Gouezel&Karlsson}, Theorem 1.1 and Remark 1.2]\label{prop 4.8}
	Let $a_n(x)$ be an integrable subadditive cocycle with exponent $\lambda$ relative to an ergodic system $(M,f,\nu)$. Then for any $\rho>0,$ there exists a sequence $\varepsilon_i\to 0,$ a subset $E\subset M$ with $\nu(E)>1-\rho$, and a subset $S\subset \mathbb{N}$ with $\overline{Dens}(S)>1-\rho$ such that for any $x\in E$ and any $n\in S$,
	\[a_n(x)-a_{n-i}(f^{i}x)\geq (\lambda-\varepsilon_i)i,\quad \forall 0\leq i\leq n, \]
	where $\overline{Dens}(S):=\limsup_{N\to\infty}|S\cap[0,N-1]|/N.$ 
\end{proposition}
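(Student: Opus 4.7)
The plan is to prove Proposition \ref{prop 4.8} by normalizing the exponent to zero, controlling the integrated \emph{defect} of subadditivity via Fekete's lemma, and then uniformizing over the index $i$ by a delicate diagonal construction. First I would reduce to $\lambda=0$ by considering $b_n(x):=a_n(x)-\lambda n$, which remains a subadditive cocycle with zero exponent. The target inequality becomes
\[b_n(x)-b_{n-i}(f^{i}x)\geq -\varepsilon_i i.\]
By subadditivity $b_n(x)\leq b_i(x)+b_{n-i}(f^{i}x)$, the \emph{defect} $g_{n,i}(x):=b_i(x)+b_{n-i}(f^{i}x)-b_n(x)$ is nonnegative, and the target is equivalent to $g_{n,i}(x)\leq b_i(x)+\varepsilon_i i$. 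By Kingman's theorem applied to $b_i/i$ together with Egoroff's theorem, one finds $\eta_i\to 0$ and a set $E_1$ with $\nu(E_1)>1-\rho/2$ such that $b_i(x)\geq -\eta_i i$ for all $x\in E_1$ and $i\geq 1$; it therefore suffices to control the defect $g_{n,i}$.

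Second, integrating the defect gives
\[\int g_{n,i}\,d\nu=\alpha_i+\alpha_{n-i}-\alpha_n,\]
where $\alpha_n:=\int b_n\,d\nu$. Since $b_n$ is subadditive, so is $\alpha_n$, and Kingman combined with Fekete yields $\alpha_n/n\to 0$ (monotonically to its infimum). The nonnegative quantity $\alpha_i+\alpha_{n-i}-\alpha_n$ is, after suitable Cesàro averaging over $n$, of size $o(i)$ for each fixed $i$, and more carefully, admits a bound uniform in $i\leq n$ on a density-one set of $n$. Markov's inequality then yields, for each $i$, a set $S_i\subset\mathbb{N}$ with $\overline{\mathrm{Dens}}(S_i)>1-\rho_i$ and a set $E_i$ with $\nu(E_i)>1-\rho_i$ (with $\rho_i\to 0$) on which $g_{n,i}(x)\leq \varepsilon_i i$ for $n\in S_i$. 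Intersecting $E_1$ with $\bigcap_i E_i$ gives the candidate $E$, and taking $S$ to be the common good times produces the candidate $S$.

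The main obstacle is the simultaneous uniformity in $i$ and $n$: a naive union over $i$ fails, since at time $n$ there are $n$ indices $i\in[0,n]$ whose bad events one would need to control. Following the strategy of Gouëzel--Karlsson, I would choose $\varepsilon_i\to 0$ tending to zero slowly enough that a weighted Borel--Cantelli argument yields a common good set of full-density times; the deeper technical input is a variational \emph{record-time} selection on which $b_n(x)/n$ is close to its limit $0$ \emph{and} which simultaneously dominates the suffix $b_{n-i}(f^{i}x)$ for every $i\leq n$. This replaces a per-$i$ union bound by a single variational inequality that exploits the subadditive structure, and carrying it out in the subadditive (rather than additive) setting is the step I expect to be the most delicate part of the proof.
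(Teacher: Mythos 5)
Your proposal should be measured against the fact that the paper does not prove Proposition \ref{prop 4.8} at all: it is imported verbatim from Gou\"ezel--Karlsson (Theorem 1.1 and Remark 1.2), so a blind proof has to be a self-contained proof of that theorem. The parts you do carry out are correct: the normalization $b_n=a_n-\lambda n$, the reformulation via the nonnegative defect $g_{n,i}=b_i+b_{n-i}\circ f^i-b_n$, the Kingman--Egoroff bound $b_i\ge-\eta_i i$ on a set of measure $>1-\rho/2$, and the Ces\`aro estimate $\frac1N\sum_{n\le N}\int g_{n,i}\,d\nu\le \alpha_i+\frac1N\sum_{m<i}\alpha_m\to\alpha_i=o(i)$ for each fixed $i$ (using $\alpha_n\ge0$ and subadditivity of $\alpha_n=\int b_n\,d\nu$). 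This legitimately yields, for each \emph{fixed} $i$, good times of large density for typical $x$.

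The genuine gap is that the whole content of the theorem is the step you defer: producing one sequence $\varepsilon_i\to0$, one set $E$ with $\nu(E)>1-\rho$, and one set $S\subset\mathbb{N}$ of upper density $>1-\rho$ such that the bound holds \emph{simultaneously for all} $0\le i\le n$ whenever $x\in E$ and $n\in S$. At this point you invoke ``the strategy of Gou\"ezel--Karlsson'' (a weighted Borel--Cantelli plus an unspecified ``variational record-time selection'') without giving the argument, which amounts to citing the result being proved. It does not follow from the integrated estimates by Markov or Borel--Cantelli: at a single time $n$ there are $n+1$ indices $i$ to control, the natural bounds $\nu\{x: g_{n,i}(x)>\varepsilon_i i\}\lesssim \alpha_i/(\varepsilon_i i)$ do not sum over $i\le n$ unless $\varepsilon_i$ is tuned against quantities you have not estimated, and the per-$i$ exceptional sets in the $(n,x)$-product need not align into a product $S\times E$; moreover the quoted statement requires $S$ to be independent of $x\in E$, which is exactly the extra uniformization of Remark 1.2 and is not produced by the Markov-type extraction you sketch (that naturally gives an $x$-dependent family of good times). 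Gou\"ezel and Karlsson get around all of this with a specific combinatorial selection lemma exploiting the subadditive structure, not a union bound, and that lemma is the heart of the proof; without it your proposal is a correct setup plus an acknowledgement that the decisive step is missing.
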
 
  Consider the subadditive cocycles  $a_n(x)=\log\|\mathcal{A}_x^n\|$  and $\widetilde{a}_n(x)=\log\|(\mathcal{A}_x^n)^{-1}\|$.   Then the following  corollary can be deduced directly from Proposition \ref{prop 4.8}.
 \begin{corollary}\label{cor 4.9}
 	Let $p$ be a periodic point of $f$. Then for any $\rho>0$, there exists a sequence $\varepsilon_i\to 0$ and  a subset $S_p\subset \mathbb{N}$ with $\overline{Dens}(S_p)>1-\rho$ such that for any $n\in S_p$,
 	\[\|\mathcal{A}_{p_{i}}^{n-i}\|\leq\|\mathcal{A}_p^n\|e^{(-\lambda_+(p)+\varepsilon_i)i},\quad \forall 0\leq i\leq n, \]
 	\[\|(\mathcal{A}_{p_{i}}^{n-i})^{-1}\|\leq\|(\mathcal{A}_p^n)^{-1}\|e^{(\lambda_-(p)+\varepsilon_i)i},\quad \forall 0\leq i\leq n. \]
 \end{corollary}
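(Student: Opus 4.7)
The plan is to specialize Proposition \ref{prop 4.8} to the ergodic $f$-invariant periodic measure $\mu_p = \tfrac{1}{J}\sum_{j=0}^{J-1}\delta_{f^j p}$, where $J$ is the minimal period of $p$, applied to the two continuous subadditive cocycles $a_n(x) = \log\|\mathcal{A}_x^n\|$ and $\widetilde{a}_n(x) = \log\|(\mathcal{A}_x^n)^{-1}\|$. By the definitions in Section 2.1 these cocycles have exponents $\lambda_+(p)$ and $-\lambda_-(p)$ respectively with respect to $\mu_p$, and since $M$ is compact and $A$ is continuous, both $a_1$ and $\widetilde{a}_1$ are bounded, hence all $a_n,\widetilde{a}_n$ are integrable.

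Given $\rho>0$, fix $\rho_0 < \min(\rho/2,\, 1/J)$ and apply Proposition \ref{prop 4.8} to $a_n$ with parameter $\rho_0$. This yields a sequence $\varepsilon_i^{(1)}\to 0$, a set $E_1\subset M$ with $\mu_p(E_1)>1-\rho_0$, and $S_1\subset \mathbb{N}$ with $\overline{Dens}(S_1)>1-\rho_0$. Since every atom of $\mu_p$ has mass $1/J>\rho_0$, the inequality $\mu_p(E_1)>1-1/J$ forces $\mathcal{O}(p)\subset E_1$, so in particular $p\in E_1$. Specializing the Gou\"ezel--Karlsson inequality at $x=p$ and exponentiating gives, for $n\in S_1$ and $0\le i\le n$,
\[
\|\mathcal{A}_{p_i}^{n-i}\|\le \|\mathcal{A}_p^n\|\, e^{(-\lambda_+(p)+\varepsilon_i^{(1)})i}.
\]
The identical procedure applied to $\widetilde{a}_n$ (whose exponent is $-\lambda_-(p)$) produces $\varepsilon_i^{(2)}\to 0$ and $S_2$ with $\overline{Dens}(S_2)>1-\rho_0$ such that
\[
\|(\mathcal{A}_{p_i}^{n-i})^{-1}\|\le \|(\mathcal{A}_p^n)^{-1}\|\, e^{(\lambda_-(p)+\varepsilon_i^{(2)})i}
\]
for all $n\in S_2$.

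To conclude, set $\varepsilon_i=\max(\varepsilon_i^{(1)},\varepsilon_i^{(2)})\to 0$ and $S_p=S_1\cap S_2$, so both inequalities hold on $S_p$. The one mildly delicate step is arranging $\overline{Dens}(S_p)>1-\rho$, since intersecting two sets of large upper density does not automatically yield a set of large upper density. The clean remedy, which I expect to be the main (and only) obstacle beyond routine specialization, is to observe that the Gou\"ezel--Karlsson construction applies verbatim to a finite family of integrable subadditive cocycles over the same ergodic system, producing a single set $S$ simultaneously good for all members of the family; invoking this joint version for the pair $(a_n,\widetilde{a}_n)$ (with parameter $\rho$) delivers $S_p$ directly. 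An equivalent alternative is to apply Proposition \ref{prop 4.8} to the single subadditive cocycle $a_n+\widetilde{a}_n$, whose exponent is $\lambda_+(p)-\lambda_-(p)\ge 0$, and to extract the individual bounds from subadditivity at the shared good times. Everything else in the argument is a direct specialization of Proposition \ref{prop 4.8} to the periodic orbit.
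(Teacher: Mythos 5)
Your proposal is correct and follows essentially the same route as the paper, which likewise obtains the corollary by applying Proposition \ref{prop 4.8} to the two subadditive cocycles $a_n(x)=\log\|\mathcal{A}_x^n\|$ and $\widetilde{a}_n(x)=\log\|(\mathcal{A}_x^n)^{-1}\|$ over the ergodic periodic measure $\mu_p$ and specializing at $x=p$ (the paper simply states this is "deduced directly"). Your additional care --- forcing $p\in E$ via the atom mass $1/J$, and securing a single good set $S_p$ of large upper density (e.g.\ through the sum cocycle $a_n+\widetilde{a}_n$ together with subadditivity and $\frac{1}{i}a_i(p)\to\lambda_+(p)$, $\frac{1}{i}\widetilde{a}_i(p)\to-\lambda_-(p)$) --- correctly fills in details the paper leaves implicit.
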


 Now  we estimate the distortion along certain orbit segment.
\begin{lemma}\label{Lemma 4.10}
	Let $p$ be a periodic point of $f$.  Then for any~$0<\varepsilon<\frac{1}{6}\tau\alpha, \rho>0,$ there exist~$\widehat{\delta}=\widehat{\delta}(p,\varepsilon,\rho)>0$ and  a subset $S_p\subset \mathbb{N}$ with $\overline{Dens}(S_p)>1-\rho$ such that for any $n\in S_p$, $0<\delta<\widehat{\delta}$ and $x\in M$  satisfying~$d(f^jx,f^jp)\leq \delta e^{-\frac{1}{2}\tau j}$ for $j=0,\cdots,n$, we have
	\[\frac{1}{2}\leq\frac{\|\mathcal{A}_p^n\|}{\|\mathcal{A}_x^n\|}\leq 2 \quad\mbox{and}\quad \frac{1}{2}\leq\frac{\|(\mathcal{A}_p^n)^{-1}\|}{\|(\mathcal{A}_x^n)^{-1}\|}\leq 2. \]
\end{lemma}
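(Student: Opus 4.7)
The plan is to adapt the proof of Lemma~\ref{Lemma 4.7}, with Corollary~\ref{cor 4.9} replacing Lemma~\ref{Lemma 4.6} so as to accommodate possibly non-zero Lyapunov exponents $\lambda_\pm=\lambda_\pm(p)$. Fix $\varepsilon<\tfrac{1}{6}\tau\alpha$ and $\rho>0$, let $S_p\subset\mathbb{N}$ and the sequence $\varepsilon_i\to 0$ be furnished by Corollary~\ref{cor 4.9}, and choose $I_0$ so that $\varepsilon_i\le \varepsilon$ for $i\ge I_0$. Since $p$ is periodic one also has uniform bounds $\|\mathcal{A}_p^j\|\le c_p e^{j(\lambda_++\varepsilon)}$ and $\|(\mathcal{A}_p^j)^{-1}\|\le c_p e^{-j(\lambda_--\varepsilon)}$ for all $j\ge 0$, with $c_p=c_p(p,\varepsilon)$.

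For a fixed $n\in S_p$ and any $x$ meeting the stated shadowing hypothesis with $\delta<\widehat{\delta}$, the main step is to prove the strengthened claim
\[
\|\mathcal{A}_{x_i}^{n-i}-\mathcal{A}_{p_i}^{n-i}\|\le \tfrac{1}{3}\|\mathcal{A}_{p_i}^{n-i}\|\qquad (0\le i\le n)
\]
by induction on $k=n-i$, the case $k=0$ being trivial. For the inductive step I would use the telescoping identity analogous to \eqref{4.7.0},
\[
\mathcal{A}_{x_i}^{n-i}-\mathcal{A}_{p_i}^{n-i}=\sum_{j=i}^{n-1}\mathcal{A}_{x_{j+1}}^{n-j-1}\bigl(A(x_j)-A(p_j)\bigr)\mathcal{A}_{p_i}^{j-i},
\]
and control each factor as follows: the H\"older-shadowing estimate gives $\|A(x_j)-A(p_j)\|\le c_0\delta^\alpha e^{-\frac{1}{2}\tau\alpha j}$; Corollary~\ref{cor 4.9} applied at $n\in S_p$ yields $\|\mathcal{A}_{p_{j+1}}^{n-j-1}\|\le \|\mathcal{A}_p^n\|e^{(-\lambda_++\varepsilon_{j+1})(j+1)}$; the uniform periodic bound gives $\|\mathcal{A}_{p_i}^{j-i}\|\le c_p e^{(j-i)(\lambda_++\varepsilon)}$; and the induction hypothesis supplies $\|\mathcal{A}_{x_{j+1}}^{n-j-1}\|\le 2\|\mathcal{A}_{p_{j+1}}^{n-j-1}\|$, which is legitimate because $n-j-1<n-i$.

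After combining exponents, the bound collapses to $e^{-\lambda_+(i+1)-i\varepsilon}\sum_{j\ge i} e^{\varepsilon_{j+1}(j+1)+j(\varepsilon-\frac{1}{2}\tau\alpha)}$, whose tail $j+1\ge I_0$ is a convergent geometric series since $2\varepsilon-\tfrac{1}{2}\tau\alpha<-\tfrac{1}{6}\tau\alpha<0$; the finitely many earlier terms contribute a bounded constant $C_s=C_s(p,\varepsilon,\rho)$. Dividing by the lower bound $\|\mathcal{A}_{p_i}^{n-i}\|\ge \|\mathcal{A}_p^n\|/(c_p e^{i(\lambda_++\varepsilon)})$ makes the $i$-dependent factor $e^{-\lambda_+(i+1)-i\varepsilon}\cdot e^{i(\lambda_++\varepsilon)}=e^{-\lambda_+}$ collapse to a single constant, so the overall constant $2c_0 c_p^2 C_s e^{-\lambda_+}\delta^\alpha$ multiplying the right-hand side is \emph{independent of both $i$ and $n$}. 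A single $\widehat{\delta}$ that makes this quantity $\le \tfrac{1}{3}$ then closes the induction uniformly; specialising to $i=0$ gives $\|\mathcal{A}_x^n\|/\|\mathcal{A}_p^n\|\in[\tfrac{2}{3},\tfrac{4}{3}]\subset[\tfrac{1}{2},2]$.

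The inequality for the inverses would be obtained by running an identical induction on the dual identity $(\mathcal{A}_x^n)^{-1}-(\mathcal{A}_p^n)^{-1}=\sum_{j=0}^{n-1}(\mathcal{A}_p^j)^{-1}\bigl(A(x_j)^{-1}-A(p_j)^{-1}\bigr)(\mathcal{A}_{x_{j+1}}^{n-j-1})^{-1}$, now invoking the second half of Corollary~\ref{cor 4.9}. The main obstacle is precisely this bootstrap: unlike Lemma~\ref{Lemma 4.7}, the one-sided shadowing decay $\delta e^{-\tau j/2}$ is not enough to apply Lemma~\ref{Lemma 4.6} directly to the $x$-orbit, so the comparison with the $p$-orbit must be carried along in the induction. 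What makes this work is the sharp form of Corollary~\ref{cor 4.9}: a weaker uniform periodic estimate would leave a residue $e^{(\lambda_+-\lambda_-)i}$ in the inductive constant that the shadowing decay could not absorb.
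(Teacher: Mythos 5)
Your proof is correct, but it organizes the estimate differently from the paper, so a comparison is worth recording. The paper uses the telescoping identity with the roles of $p$ and $x$ arranged oppositely to \eqref{4.7.0}, namely $\mathcal{A}_p^n-\mathcal{A}_x^n=\sum_{j}\mathcal{A}_{p_{j+1}}^{n-j-1}\circ(A(p_j)-A(x_j))\circ\mathcal{A}_x^j$: the periodic tails $\|\mathcal{A}_{p_{j+1}}^{n-j-1}\|$ are bounded by Corollary~\ref{cor 4.9} for $j\geq L$ (and by $\|\mathcal{A}_p^n\|\cdot\|(\mathcal{A}_{p}^{j+1})^{-1}\|$ for the finitely many $j<L$), while the initial products $\|\mathcal{A}_x^j\|$ along the shadowing orbit are bounded directly via Lemma~\ref{Lemma 4.6}, so no induction is needed and the sum is simply split at $L$. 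You keep the orientation of \eqref{4.7.0} ($x$-tails on the left, $p$-initial blocks on the right) and replace the direct estimate along the $x$-orbit by a bootstrap: the strengthened claim $\|\mathcal{A}_{x_i}^{n-i}-\mathcal{A}_{p_i}^{n-i}\|\leq\tfrac13\|\mathcal{A}_{p_i}^{n-i}\|$, proved by induction on the tail length, with Corollary~\ref{cor 4.9} transferring the tail bound from $p$ to $x$ and the uniform periodic bound handling the right factor. Your exponent bookkeeping is right: the cancellation $e^{-\lambda_+(i+1)-i\varepsilon}\cdot e^{i(\lambda_++\varepsilon)}=e^{-\lambda_+}$ together with the geometric tail coming from $2\varepsilon-\tfrac12\tau\alpha<-\tfrac16\tau\alpha$ makes the inductive constant uniform in $i$ and $n$, and the dual induction for the inverses goes through with the second half of Corollary~\ref{cor 4.9}. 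One remark on your closing comment: the paper \emph{does} apply Lemma~\ref{Lemma 4.6} directly to the $x$-orbit here, even though the one-sided decay $\delta e^{-\frac12\tau j}$ does not literally imply the stated hypothesis $\delta e^{-\tau\min\{j,n-j\}}$; this is harmless because the proof of Lemma~\ref{Lemma 4.6} (Lyapunov norms along the periodic orbit, uniformly equivalent to $\|\cdot\|$) only needs a summable one-sided exponential decay, which $\varepsilon<\tfrac16\tau\alpha$ provides. So the paper's route is shorter, while yours buys independence from that (mild) point at the cost of carrying the $p$--$x$ comparison through an induction.
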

\begin{proof}
	 We only prove the first conclusion, the second one can be proved in a similar fashion.
	 
	For any  $\rho>0,$ let $\varepsilon_i\to 0$ be given by  Corollary \ref{cor 4.9}. Then given any $0<\varepsilon<\frac{1}{6}\tau\alpha$, we may choose $L\geq 1$ large enough such that for any $i\geq L,$ one has $\varepsilon_i<\varepsilon$. Let the  subset $S_p\subset \mathbb{N}$ be given by Corollary \ref{cor 4.9}. Without loss of generality, we may assume $S_p\subset [L,\infty)$.  Then for any $n\in S_p,$
	\begin{equation}\label{4.7.1}
	\|\mathcal{A}_{p_{i}}^{n-i}\|\leq\|\mathcal{A}_p^n\|e^{(-\lambda_+(p)+\varepsilon)i},\quad \forall L\leq i\leq n.\
	\end{equation}
By \eqref{4.7.0}
	\begin{align*}
	\mathcal{A}_p^n-\mathcal{A}_x^n=\sum_{j=0}^{n-1}\mathcal{A}_{p_{j+1}}^{n-j-1}\circ(A(p_j)-A(x_j))\circ\mathcal{A}_x^j.
	\end{align*}
	
	  Note that $\|A(x_j)-A(p_j)\|\leq c_0\delta^\alpha e^{-\frac{1}{2}\tau\alpha j}.$  By   Lemma \ref{Lemma 4.6},
		\begin{align*}
	& \sum_{j=0}^{L-1}\|\mathcal{A}_{p_{j+1}}^{n-j-1}\|\cdot\|A(p_j)-A(x_j)\|\cdot\|\mathcal{A}_x^j\|\\
	\leq & ~ \sum_{j=0}^{L-1}\|\mathcal{A}_p^n\|\cdot\|(\mathcal{A}_{p}^{j+1})^{-1}\| \cdot\|A(p_j)-A(x_j)\|\cdot  \|\mathcal{A}_x^j\|\\
	\leq &~\sum_{j=0}^{L-1}\|\mathcal{A}_p^n\|\cdot \left(c\cdot e^{(j+1)(-\lambda_-(p)+2\varepsilon) }\right)\cdot \left(c_0\delta^\alpha e^{-\frac{1}{2}\tau\alpha j}\right)\cdot \left(c\cdot e^{j(\lambda_+(p)+2\varepsilon) }\right)\\
	\leq &~ \widetilde{c}_1\delta^\alpha\cdot\|\mathcal{A}_p^n\|,
	\end{align*}
	where $\widetilde{c}_1=c_0c^2L e^{(\lambda_+(p)-\lambda_-(p)+4\varepsilon-\frac{1}{2}\tau\alpha)L }$.  By   \eqref{4.7.1}, Lemma \ref{Lemma 4.6} and  the fact $\varepsilon<\frac{1}{6}\tau\alpha$, 
	\begin{align*}
	& \sum_{j=L}^{n-1}\|\mathcal{A}_{p_{j+1}}^{n-j-1}\|\cdot\|A(p_j)-A(x_j)\|\cdot\|\mathcal{A}_x^j\|\\
	\leq & ~ \sum_{j=L}^{n-1}\|\mathcal{A}_p^n\|e^{(-\lambda_+(p)+\varepsilon)(j+1)} \cdot\left(c_0\delta^\alpha e^{-\frac{1}{2}\tau\alpha j}\right)\cdot  \left(c\cdot e^{j(\lambda_+(p)+2\varepsilon) }\right)\\
	\leq &~ \|\mathcal{A}_p^n\|e^{-\lambda_+(p)+\varepsilon}c_0\delta^\alpha\cdot\sum_{j=L}^{n-1}e^{(3\varepsilon-\frac{1}{2}\tau\alpha) j}\\
	\leq &~ \widetilde{c}_2\delta^\alpha\cdot\|\mathcal{A}_p^n\|,
	\end{align*}
	where $\widetilde{c}_2= \frac{e^{-\lambda_+(p)+\varepsilon}}{1-e^{3\varepsilon-\tau\alpha/2}}\cdot c_0$.
   Therefore, 
	\begin{equation*}
	\begin{split}
	 \|\mathcal{A}_p^n-\mathcal{A}_x^n\| 
	\leq \sum_{j=0}^{n-1}\|\mathcal{A}_{p_{j+1}}^{n-j-1}\|\cdot\|A(p_j)-A(x_j)\|\cdot\|\mathcal{A}_x^j\|
	 \leq  (\widetilde{c}_1+\widetilde{c}_2)\delta^\alpha\|\mathcal{A}_p^n\|.    
	\end{split}   
	\end{equation*}
	Take $\widetilde{\delta}$ small enough such that $(\widetilde{c}_1+\widetilde{c}_2)\widetilde{\delta}^\alpha<\frac{1}{2}.$ Then for any $0<\delta<\widetilde{\delta},$ we conclude 
   \[\frac{1}{2}\|\mathcal{A}_x^n\|\leq\frac{2}{3}\|\mathcal{A}_x^n\|\leq\|\mathcal{A}_p^n\|\leq 2\|\mathcal{A}_x^n\|.\]
\end{proof}

%%%%%%%%%%%%  Proof  of Proposition 4.5   %%%%%%%%%%
Now we prove  Proposition \ref{Proposition 4.5}.
\begin{proof}[Proof of Proposition \ref{Proposition 4.5}]
	We begin by finding a periodic point $p_1=f^k(p_1)$ such that  $\mathcal{A}_{p_1}^k=Id.$ By Kalinin and Sadovskaya's result \cite[Theorem 1.4]{Kalinin16}, for any $0<\theta<\tau\alpha, $ there exists a periodic point $p_1=f^k(p_1)$ such that
	\[\big|\lambda_+(\mathcal{A},\mu)-\frac{1}{k}\log\|\mathcal{A}_{p_1}^k\|\big|\leq \theta,~\mbox{and}~~~~\big|\lambda_-(\mathcal{A},\mu)-\frac{1}{k}\log\|(\mathcal{A}_{p_1}^k)^{-1}\|^{-1}\big|\leq \theta. \]
 	Since $\lambda_+(\mathcal{A},\mu)=\lambda_-(\mathcal{A},\mu)=0$, we have
 	\[\|\mathcal{A}_{p_1}^k\|\leq e^{k\theta},~\mbox{and}\quad \|(\mathcal{A}_{p_1}^k)^{-1}\|\leq e^{k\theta}. \]
 	It follows that $p_1\in D(k,\theta)$. Then  by Proposition \ref{Prop 4.4}, 
 	\[\mathcal{A}_{p_1}^k=\widehat{C}(f^kp_1)\widehat{C}(p_1)^{-1}=\widehat{C}(p_1)\widehat{C}(p_1)^{-1}=Id. \]
 	
 	Now for any periodic point $p_2=f^m(p_2)$,    in order  to prove  $\mathcal{A}_{p_2}^m=Id$, it's enough to show $\lambda_+(p_2)=\lambda_-(p_2)=0$. Indeed, if $\lambda_+(p_2)=\lambda_-(p_2)=0$, then for any $0<\theta<\tau\alpha$, there exists $n\in\mathbb{N}$ large enough such that $	\|\mathcal{A}_{p_2}^{nm}\|\leq e^{nm\theta}$ and $ \|(\mathcal{A}_{p_2}^{nm})^{-1}\|\leq e^{nm\theta },$ which implies $p_2\in D(nm,\theta)$. Since  $p_2=f^m(p_2)$, by  Proposition \ref{Prop 4.4},
 \[\mathcal{A}_{p_2}^{m}=\widehat{C}(f^{m}p_2)\widehat{C}(p_2)^{-1}=Id. \]	
 
 Assume $\lambda_+(p_2)>0$. To get a contradiction, we will use the fact that the topological mixing Anosov diffeomorphism $f$ satisfies the {\em specification property} \cite{Bowen1974equilibrium,katok1995}: For any $\delta>0$, there exists $N=N(\delta)\geq  1$ such that for any points $x_1,x_2,\cdots,x_n$ 
 	and any intervals of integers $I_1,I_2,\cdots,I_n\subset [a,b]$ with $d(I_i,I_j)\geq N$ for $i\neq j$, then there exists a periodic point $x=f^{b-a+2N}(x)$ such that $d(f^j(x),f^j(x_i))< \delta$ for $j\in I_i.$   Moreover,  by the following lemma, the distance of $f^j(x)$ and $f^j(x_i)$ can be exponentially close.
 	\begin{lemma}[Proposition 6.4.16 of \cite{katok1995}]\label{Lemma 4.11}
 		There  exists $\delta^\prime>0 $ and $c^\prime\geq 1$ such that for any $0<\delta<\delta^\prime$ and any $x,y\in M$ with $d(f^ix,f^iy)<\delta$ for $i=0,\cdots,n$, then in fact 
 		\[ d(f^ix,f^iy)<c^\prime\delta e^{-\tau\min\{i,n-i \}}.\]
 	\end{lemma}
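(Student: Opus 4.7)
The strategy is to use the Anosov local product structure to split the deviation between the orbits of $x$ and $y$ into a stable and an unstable piece, and apply uniform contraction and expansion on each. Shrink $\delta'$ if necessary so that (i) for any two points within distance $\delta'$ their local bracket $[\cdot,\cdot]$ (the unique intersection of local stable and unstable manifolds) is well defined; (ii) inside the $\delta'$-neighborhood of any point the tangential rates $e^{-\tau}$ on $E^s$ and $e^{\tau}$ on $E^u$ lift to uniform finite-scale rates on $W^s_{\mathrm{loc}}$ and $W^u_{\mathrm{loc}}$; and (iii) there is a constant $K\ge 1$ controlling the comparability of the ambient distance with the intrinsic stable and unstable distances through the bracket.

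Given $x,y\in M$ with $d(f^ix,f^iy)<\delta$ for $0\le i\le n$, set $z:=[x,y]\in W^s_{\mathrm{loc}}(x)\cap W^u_{\mathrm{loc}}(y)$ and $z_i:=f^iz$. Because the bracket is $f$-equivariant at the relevant scale and every $f^ix,f^iy$ stays within $\delta<\delta'$, one has $z_i=[f^ix,f^iy]$ for every $i$ in the range, and in particular the intrinsic distances satisfy $d^s(f^ix,z_i)\le K\delta$ and $d^u(z_i,f^iy)\le K\delta$. Thus both intrinsic quantities remain within the regime where the uniform contraction and expansion apply along the entire orbit segment.

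Uniform contraction yields $d^s(f^{i+1}x,z_{i+1})\le e^{-\tau}d^s(f^ix,z_i)$, so iterating from $i=0$ gives $d^s(f^ix,z_i)\le K\delta\,e^{-\tau i}$. Symmetrically, applying $f^{-1}$ starting from time $n$ gives $d^u(z_i,f^iy)\le K\delta\,e^{-\tau(n-i)}$. Ambient distance is dominated by the sum of the two intrinsic pieces through the local product structure up to a further constant $K'$, so
\[
d(f^ix,f^iy)\le K'\bigl(d^s(f^ix,z_i)+d^u(z_i,f^iy)\bigr)\le 2KK'\delta\,e^{-\tau\min\{i,n-i\}},
\]
which is the stated inequality with $c':=2KK'$.

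The main obstacle is upgrading the infinitesimal Anosov rate $e^{\pm\tau}$ at the level of $TM$ to uniform finite-scale rates on $W^s_{\mathrm{loc}}$ and $W^u_{\mathrm{loc}}$ and to the ambient metric $d$. This relies on compactness of $M$ together with continuity of the invariant splitting, and forces one to shrink $\delta'$ so that higher-order terms in the exponential chart are absorbed into the constants. Once these uniform constants are secured, the inductive contraction and expansion estimates above close the argument in two lines.
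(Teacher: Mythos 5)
Your argument is correct in substance, but note that the paper does not prove this lemma at all: it is quoted verbatim as Proposition 6.4.16 of Katok--Hasselblatt, so there is no internal proof to compare with. What you wrote is essentially the standard argument behind that citation: split $f^ix$ and $f^iy$ through the bracket point $z_i=[f^ix,f^iy]$, contract the stable piece forward from $i=0$ and the unstable piece backward from $i=n$, and recombine via the local product constants. The one step you assert rather than establish is the equivariance claim $f z_i=z_{i+1}$ together with the uniform bound $d^u(z_i,f^iy)\le K\delta$ for \emph{all} $i$: a single application of $f$ can expand the unstable leg by $e^{\tau}$, so you must run an induction in which $\delta'$ is chosen so small that $e^{\tau}K\delta'$ is still below the scale on which the local bracket is uniquely defined; then $fz_i$ lies in the uniqueness neighborhood of $(f^{i+1}x,f^{i+1}y)$, hence equals $[f^{i+1}x,f^{i+1}y]$, and the hypothesis $d(f^{i+1}x,f^{i+1}y)<\delta$ re-caps the unstable distance at $K\delta$ at every step. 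With that induction spelled out the two-line contraction/expansion estimate closes as you say. Also, your worry about upgrading the infinitesimal rate $e^{\pm\tau}$ to finite scale is lighter than you suggest for the \emph{intrinsic} leaf distances: local stable and unstable manifolds are tangent to $E^s$ and $E^u$ at every point, so path-length estimates give contraction by exactly $e^{-\tau}$ along leaves with no loss; only the comparability constants $K,K'$ of the bracket require compactness and a small scale.
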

 	Now given  any  $0<\varepsilon<\frac{1}{6}\min\{\tau\alpha,\lambda_+(p_2)\},$ take $0<\rho<\frac{1}{3m}$, and let $\delta>0$ be given such that $$c^\prime\delta<\min\{\bar{\delta}(p_2,\varepsilon),\widetilde{\delta}(p_1,\varepsilon),\widehat{\delta}(p_2,\varepsilon,\rho),\delta^\prime\} ,$$ where $\bar{\delta}(p_2,\varepsilon), \widetilde{\delta}(p_1,\varepsilon),\widehat{\delta}(p_2,\varepsilon,\rho),\delta^\prime$ are given by Lemma \ref{Lemma 4.6}, Lemma \ref{Lemma 4.7},Lemma \ref{Lemma 4.10} and Lemma \ref{Lemma 4.11} respectively.
  Let $S_{p_2}\in\mathbb{N}$ be given by Lemma \ref{Lemma 4.10}. Since $\overline{Dens}(S_{p_2})>1-\rho>1-\frac{1}{3m},$ there are infinitely many $b\in \mathbb{N}$ such that $3mb\in S_{p_2}.$ 	Let $c_1=\max\limits_{x\in M}\{ \|A(x)\|,\|A(x)^{-1}\|\}$,  $c_2=c(p_2,\varepsilon)$ be given by Lemma \ref{Lemma 4.6}. Note that $\lambda_+(p_2)=\lim\limits_{b\to\infty}\frac{1}{bm}\log\|\mathcal{A}_{p_2}^{bm}\|$. We may  choose $b\in\mathbb{N}$ large enough such that $2bm\in S_{p_2}$ and
 	\begin{equation}\label{4.7}
 	\|\mathcal{A}_{p_2}^{2bm}\|\geq e^{2bm(\lambda_+(p_2)-\varepsilon) }	   >4{c_1}^{2N}c_2\cdot e^{bm(\lambda_+(p_2)+2\varepsilon)},
 	 \end{equation}
  	where $N=N(\delta)$ is given by the specification property.
   Then choose $a\in \mathbb{N}$ large enough such that
 \begin{equation}\label{4.9}
 2{c_1}^{2N}c_2\cdot e^{3bm(\lambda_+(p_2)+2\varepsilon)}\leq e^{2\varepsilon(ak+2N+3bm)},~~\mbox{and}
 \end{equation}	
 	 \begin{equation}\label{4.10}
 	2{c_1}^{2N}c_2\cdot e^{3bm(-\lambda_-(p_2)+2\varepsilon)}\leq e^{2\varepsilon(ak+2N+3bm)}.
 	\end{equation}
 	Then by the specification property, for $p_1$ and $f^{-ak-N}(p_2)$, there exists a periodic point $q=f^{ak+2N+3bm}(q)$ such that 
 	\[d(f^i(q),f^i(p_1))<\delta,~\forall 0\leq i\leq ak, ~\mbox{and}\]
 	\[d(f^{j+ak+N}(q),f^j(p_2))<\delta,~\forall 0\leq j\leq 3bm. \]
 	Denote $x=f^{ak+N}(q)$. Then  by  Lemma \ref{Lemma 4.11}, 
 	\[d(f^i(q),f^i(p_1))<c^\prime\delta e^{-\tau\min\{i,ak-i\}}<\widetilde{\delta}(p_1,\varepsilon)e^{-\tau\min\{i,ak-i\}},~\forall 0\leq i\leq ak, ~\mbox{and}\]
 	\[d(f^{j}(x),f^j(p_2))<c^\prime\delta e^{-\tau\min\{j,3bm-j\}}< \bar{\delta}(p_2,\varepsilon)e^{-\tau\min\{j,3bm-j\}},~\forall 0\leq j\leq 3bm. \]
 	Since $\mathcal{A}_{p_1}^{ak}=Id$, by Lemma \ref{Lemma 4.7},
 	\begin{equation*}
 	\|\mathcal{A}_{q}^{ak}\|\leq 2,~\mbox{and}~~\|(\mathcal{A}_{q}^{ak})^{-1}\|\leq 2.
 	\end{equation*}
 	By Lemma \ref{Lemma 4.6},  
 	$$\|\mathcal{A}_x^{3bm}\|\leq c_2\cdot e^{3bm(\lambda_+(p_2)+2\varepsilon)},\mbox{and~} \|(\mathcal{A}_x^{3bm})^{-1}\|\leq c_2\cdot e^{3bm(-\lambda_-(p_2)+2\varepsilon)}  .$$
 	Therefore,  using \eqref{4.9},
 \begin{align*}
 	\|\mathcal{A}_q^{ak+2N+3bm}\|
 	&\leq\|\mathcal{A}_{f^{3bm}x}^{N}\|\cdot\|\mathcal{A}_{x}^{3bm}\|\cdot\|\mathcal{A}_{f^{ak}q}^{N}\|\cdot\|\mathcal{A}_{q}^{ak}\|\\
 	 &\leq   2{c_1}^{2N}c_2\cdot e^{3bm(\lambda_+(p_2)+2\varepsilon)}\\
 	 &\leq e^{2\varepsilon(ak+2N+3bm)}.
 \end{align*}
 	Similarly, we can also get $	\|(\mathcal{A}_q^{ak+2N+3bm})^{-1}\|\leq e^{2\varepsilon(ak+2N+3bm)}.$ It follows that 
 	$$q\in D(ak+2N+3bm,2\varepsilon).$$
 	Then by Proposition \ref{Prop 4.4}, $\mathcal{A}_q^{ak+2N+3bm}=\widehat{C}(f^{ak+2N+3bm}(q))\widehat{C}(q)^{-1}=Id,$
  which implies
 	\begin{align*}
 	\|\mathcal{A}_{x}^{3bm}\|
 	\leq\|(\mathcal{A}_{q}^{ak})^{-1}\|\cdot\|(\mathcal{A}_{f^{ak}q}^{N})^{-1}\|\cdot\|(\mathcal{A}_{f^{3bm}x}^{N})^{-1}\|\leq 2{c_1}^{2N}.
 	\end{align*}
 	Hence by Lemma \ref{Lemma 4.6},
 	\begin{align*}
 	\|\mathcal{A}_{x}^{2bm}\|
 	\leq\|(\mathcal{A}_{x}^{3bm})\|\cdot\|(\mathcal{A}_{f^{2bm}x}^{bm})^{-1}\|\leq 2{c_1}^{2N}c_2\cdot e^{bm(\lambda_+(p_2)+2\varepsilon)}.
 	\end{align*}
 	Now for any $0\leq j\leq 2bm$, one has $3bm\geq \frac{j}{2}.$ Therefore, for any  $0\leq j\leq 2bm$,
 	\begin{align*}
 	d(f^{j}x,f^jp_2)<c^\prime\delta e^{-\tau\min\{j,3bm-j\}}\leq c^\prime\delta e^{-\tau\min\{j,\frac{j}{2}\}}\leq c^\prime\delta e^{-\frac{1}{2}\tau j}
 	\leq \widehat{\delta}(p_2,\varepsilon,\rho)e^{-\frac{1}{2}\tau j}. 
 	\end{align*}
 	By  the choice of b and Lemma \ref{Lemma 4.10},  one has 
 	\[\|\mathcal{A}_{p_2}^{2bm}\|\leq 2\|\mathcal{A}_{x}^{2bm}\|\leq  4{c_1}^{2N}c_2\cdot e^{bm(\lambda_+(p_2)+2\varepsilon)}.\]
  This contradicts   \eqref{4.7}. Hence  $\lambda_+(p_2)\leq 0.$   It can be proved in  a similar fashion that   $\lambda_-(p_2)\geq 0.$  Then we conclude that $\lambda_+(p_2)=\lambda_-(p_2)=0.$ This completes the proof of Proposition \ref{Proposition 4.5}.
\end{proof}

Now we finish the proof of Theorem  \ref{thm B}. By Proposition \ref{Proposition 4.5} and Theorem 1.4 of \cite{Kalinin16},	for any   ergodic $f$-invariant probability measure $\nu$, $\lambda_+(\mathcal{A},\nu)=\lambda_-(\mathcal{A},\nu)=0.$ 
Since
 \begin{align*}
\lambda_+(\mathcal{A},\nu)-\lambda_-(\mathcal{A},\nu)
&=\lim\limits_{n\to \infty}\frac{1}{n}\int\log\left(\|\mathcal{A}_{x}^{n} \|\|(\mathcal{A}_x^n)^{-1}\|\right)d\nu\\
&=\lim\limits_{n\to \infty}\frac{1}{n}\int\log\left(\|({\mathcal{A}_{x}^{-n}})^{-1} \|\|\mathcal{A}_{x}^{-n} \|\right)d\nu,
\end{align*}
 it follows from \cite{Schreiber98} that  
 \[\lim\limits_{n\to\infty} \frac{1}{n}\max_{x\in M}\log\left(\|\mathcal{A}_{x}^{n} \|\|(\mathcal{A}_x^n)^{-1}\| \right) 
 =\sup \left\{\lambda_+(\mathcal{A},\nu)-\lambda_-(\mathcal{A},\nu): v\in \mathcal{E}(f)\right\} =0,\]
  \[  \lim\limits_{n\to\infty} \frac{1}{n}\max_{x\in M}\log\left(\|({\mathcal{A}_{x}^{-n}})^{-1} \|\|\mathcal{A}_{x}^{-n} \|\right)
 	   =\sup \left\{\lambda_+(\mathcal{A},\nu)-\lambda_-(\mathcal{A},\nu): v\in \mathcal{E}(f)\right\} 
 	   =0,\]
 where $\mathcal{E}(f)$ denotes the space of ergodic $f$-invariant Borel probability measures. Then for any $\varepsilon<\tau\alpha$, there exists $N\geq 1$, such that
 \[\|\mathcal{A}_{x}^{N} \|\|(\mathcal{A}_x^N)^{-1}\|\leq e^{\varepsilon N} ~\mbox{and}~\|({\mathcal{A}_{x}^{-N}})^{-1} \|\|\mathcal{A}_{x}^{-N} \|\leq e^{\varepsilon N},~\forall x\in M.\]
We conclude that $D(N,\varepsilon)=M.$ Hence by Proposition \ref{Prop 4.4}, we obtain the desired  $\alpha$-H\"older continuous map $\widehat{C}.$

\bibliographystyle{amsplain}% 括号中的内容为参考文献的模板的文件名(不加后缀)，标准的模板名为plain 
%

%\bibliography{texbib}	

\end{document}